      \numberwithin{equation}{section}
\DeclareSymbolFont{cmarrows}{OMS}{cmsy}{m}{n}
\DeclareMathSymbol{\cmminus}{\mathbin}{cmarrows}{"00}
\DeclareMathSymbol{\leftrightarrow}{\mathrel}{cmarrows}{"24}
\DeclareMathSymbol{\leftarrow}{\mathrel}{cmarrows}{"20}
\DeclareMathSymbol{\rightarrow}{\mathrel}{cmarrows}{"21}
   \let\to=\rightarrow
\DeclareMathSymbol{\mapstochar}{\mathrel}{cmarrows}{"37}
   \def\mapsto{\mapstochar\rightarrow}
\DeclareSymbolFont{usualmathcal}{OMS}{cmsy}{m}{n}
\DeclareSymbolFontAlphabet{\mathcal}{usualmathcal}
\DeclareMathAlphabet\BCal{OMS}{cmsy}{b}{n}
\newcommand{\mylabel}[2]{#2\def\@currentlabel{#2}\label{#1}}
\definecolor{cornellred}{rgb}{0.7, 0.11, 0.11}
\definecolor{britishracinggreen}{rgb}{0.0, 0.26, 0.15}
\definecolor{cobalt}{rgb}{0.0, 0.28, 0.67}
\newcommand{\V}{\mathbf{V}}
\newcommand{\into}{\hookrightarrow}
\newcommand{\onto}{\twoheadrightarrow}
\newcommand{\HH}{\mathrm{H}}
\newcommand{\Def}{\mathsf{Def}}
\newcommand{\OO}{\mathscr O}
\newcommand{\bfk}{\mathbf{k}}
\newcommand{\derived}{\mathbf{D}}
\newcommand{\boldit}[1]{\boldsymbol{#1}}
\newcommand{\pt}{{\mathsf{pt}}}
\DeclareMathOperator{\vd}{vdim}
\DeclareMathOperator{\rk}{rk}
\DeclareMathOperator{\vir}{\mathrm{vir}}
\DeclareMathOperator{\Bun}{Bun}
\DeclareMathOperator{\Ob}{Ob}
\DeclareMathOperator{\Perf}{Perf}
\DeclareMathOperator{\Cone}{\mathrm{Cone}}
\DeclareMathOperator{\RR}{\mathbf{R}}
\DeclareMathOperator{\QQ}{\mathrm{Q}}
\DeclareMathOperator{\LL}{\mathbf{L}}
\DeclareMathOperator{\lHom}{\mathscr{H}\kern-0.3em\mathit{om}}
\DeclareMathOperator{\RRlHom}{\mathbf{R}\kern-0.025em\mathscr{H}\kern-0.3em\mathit{om}}
\DeclareMathOperator{\lExt}{{\mathscr{E}\kern-0.2em\mathit{xt}}}
\DeclareMathOperator{\Sym}{Sym}
\DeclareMathOperator{\Var}{Var}
\DeclareMathOperator{\Coh}{Coh}
\DeclareMathOperator{\Hom}{Hom}
\DeclareMathOperator{\Ext}{Ext}
\DeclareMathOperator{\Quot}{Quot}
\DeclareMathOperator{\Spec}{Spec}
\DeclareMathOperator{\Art}{Art}
\DeclareMathOperator{\Sets}{Sets}
\DeclareMathOperator{\Sch}{Sch}
\newcommand{\TT}{\mathbf{T}}
\newcommand{\BA}{{\mathbb{A}}}
\newcommand{\BC}{{\mathbb{C}}}
\newcommand{\BE}{{\mathbb{E}}}
\newcommand{\BF}{{\mathbb{F}}}
\newcommand{\BG}{{\mathbb{G}}}
\newcommand{\BL}{{\mathbb{L}}}
\newcommand{\BP}{{\mathbb{P}}}
\newcommand{\BQ}{{\mathbb{Q}}}
\newcommand{\BZ}{{\mathbb{Z}}}
\newcommand{\CI}{{\mathcal I}}
\newcommand{\CJ}{{\mathcal J}}
\newcommand{\CK}{{\mathcal K}}
\newcommand{\CT}{{\mathcal T}}
\newcommand{\CU}{{\mathcal U}}
\newcommand{\CW}{{\mathcal W}}
\newcommand{\CZ}{{\mathcal Z}}
\newcommand*{\isoarrow}[1]{\arrow[#1,"\rotatebox{90}{\(\sim\)}"
]}
\tikzset{commutative diagrams/.cd,
mysymbol/.style={start anchor=center,end anchor=center,draw=none}}
\newcommand\MySymb[2][\square]{%
  \arrow[mysymbol]{#2}[description]{#1}}
\tikzset{
shift up/.style={
to path={([yshift=#1]\tikztostart.east) -- ([yshift=#1]\tikztotarget.west) \tikztonodes}
}
}
\theoremstyle{definition}
\newtheorem*{lemma*}{Lemma}
\newtheorem*{theorem*}{Theorem}
\newtheorem*{example*}{Example}
\newtheorem*{fact*}{Fact}
\newtheorem*{notation*}{Notation}
\newtheorem*{definition*}{Definition}
\newtheorem*{prop*}{Proposition}
\newtheorem*{remark*}{Remark}
\newtheorem*{corollary*}{Corollary}
\newtheorem*{conventions*}{Conventions}
\newtheorem{definition}{Definition}[section]
\newtheorem{example}[definition]{Example}
\newtheorem{remark}[definition]{Remark}
\newtheoremstyle{thm} 
        {3mm}
        {3mm}
        {\slshape}
        {0mm}
        {\bfseries}
        {.}
        {1mm}
        {}
\theoremstyle{thm}
\newtheorem{theorem}[definition]{Theorem}
\newtheorem{corollary}[definition]{Corollary}
\newtheorem{lemma}[definition]{Lemma}
\newtheorem{prop}[definition]{Proposition}
\newtheorem{thm}{Theorem}
\newtheorem*{Acknowledgments*}{Acknowledgments}
\newcommand{\ns}{\boldit{s}}
\newcommand{\nd}{\boldit{d}}
\newcommand{\nr}{\boldit{r}}
\newcommand{\nn}{\boldit{n}}
\newcommand{\nq}{\boldit{q}}
\newcommand{\nv}{\boldit{v}}
\newcommand{\Flag}{\mathrm{Flag}}
\newcommand{\fix}{\mathsf{fix}}
\newcommand{\ch}{\mathsf{ch}}
\title[Hyperquot schemes on curves]{Hyperquot schemes on curves: \\ virtual class and motivic invariants}
\author{Sergej Monavari and Andrea T. Ricolfi}
\keywords{Quot schemes, Deformation Theory, Grothendieck ring of varieties}
\begin{document}
\begin{abstract}
Let $C$ be a smooth projective curve, $E$ a locally free sheaf. Hyperquot schemes on $C$ parametrise flags of coherent quotients of $E$ with fixed Hilbert polynomial, and offer alternative compactifications to the spaces of maps from $C$ to partial flag varieties. Motivated by enumerative geometry, in this paper we construct a perfect obstruction theory (and hence a virtual class and a virtual structure sheaf) on these moduli spaces, which we use to provide criteria for smoothness and unobstructedness. Under these assumptions, we determine their motivic partition function in the Grothendieck ring of varieties, in terms of the motivic zeta function of $C$.
\end{abstract}
\maketitle

{\hypersetup{linkcolor=black}\tableofcontents}

\section{Introduction}
\subsection{Overview}
Let $C$ be a smooth projective curve over an algebraically closed field $\bfk$, and let $E$ be a locally free sheaf of rank $r$ on $C$. Fix a positive integer $l$, along with two $l$-tuples of integers $\ns=(s_1,\ldots,s_l)$ and $\nd=(d_1,\ldots,d_l)$, where $0\leqslant s_1\leqslant \cdots\leqslant s_l\leqslant r$. In this paper we study a generalisation of the classical Grothendieck's Quot scheme \cite{Grothendieck_Quot}, namely the \emph{hyperquot scheme}
\[
\Quot_C(E,\ns,\nd),
\]
parametrising sequences
\[
\begin{tikzcd}
K_l\arrow[hook]{r} & \cdots\arrow[hook]{r} & K_1\arrow[hook]{r} & E\arrow[two heads]{r} & T_l\arrow[two heads]{r} & \cdots\arrow[two heads]{r} & T_1
\end{tikzcd}
\]
in $\Coh(C)$, where $K_i = \ker (E \onto T_i)$ and $(\rk T_i,\deg T_i) = (s_i,d_i)$ for all $i=1,\ldots,l$.

When $E=\OO_C^{\oplus r}$, the hyperquot scheme provides a natural compactification of the space of morphisms $M_{\nd}(C,\Flag(\ns, r))$, of multidegree $\nd$, from $C$ to the partial flag variety $\Flag(\ns,r)$. This compactification is a natural alternative to the moduli space of \emph{stable maps} of Kontsevich, and was effectively exploited to compute the quantum cohomology ring of partial flag varieties, see e.g.~\cite{CF_quantum_duke, CF_quantum_cohom_IMRN, CF_quantum_Trans, Chen_quantum, Ber_quantum_shub} for a non-exhaustive list of references.

\smallbreak
It is natural to approach the study of a moduli space by examining its \emph{invariants}. In this paper we pursue this direction in two different ways:

\begin{itemize}
\item [\mylabel{objective-1}{(1)}] We construct a perfect obstruction theory in the sense of Behrend--Fantechi \cite{BF_normal_cone}, leading to a virtual fundamental class and a virtual structure sheaf, on the space $\Quot_C(E,\ns,\nd)$. This paves the way to the study of the virtual intersection theory of the hyperquot scheme.
\item [\mylabel{objective-2}{(2)}] We compute the motivic class of $\Quot_C(E,\ns,\nd)$ in the Grothendieck ring of varieties $K_0(\Var_{\bfk})$, whenever $E$ is split and the space is smooth and unobstructed. 
\end{itemize}

The ring valued invariants that `behave like' the topological Euler characteristic are called \emph{motivic}. By the well-known universal property of $K_0(\Var_{\bfk})$, our results compute all motivic invariants of the  spaces $\Quot_C(E,\ns,\nd)$ under the smooth and unobstructed assumption.

We note that \ref{objective-2} generalises Chen's results \cite{Chen_hyperquot} on the Poincar\'e polynomials of the hyperquot scheme attached to $(C,E)=(\BP^1,\OO^{\oplus r})$. In particular, we discover new instances of smooth hyperquot schemes. 

In the next subsection, we state our results in greater detail.

\subsection{Statements of the main results}
Our first main result is the existence of a perfect obstruction theory on the hyperquot scheme of a curve.

\begin{thm}[\Cref{thm: pot 2 nest}, \Cref{cor: virtual cycles}]\label{mainthm-POT}
Let $C$ be a smooth projective curve of genus $g$, $E$ a locally free sheaf of rank $r>0$, and $l$ a positive integer. Fix tuples $\ns=(s_1\leqslant \dots \leqslant s_l)$ and $\nd=(d_1, \dots, d_l)$. The hyperquot scheme $\QQ = \Quot_C(E,\ns,\nd)$ admits a perfect obstruction theory
\[
\begin{tikzcd}
\BE_{\QQ} = \Cone\Biggl(\displaystyle\bigoplus_{i=1}^l \RRlHom_\pi (\CK_i,\CK_i) \arrow{r}{} & \displaystyle\bigoplus_{i=0}^{l-1} \RRlHom_\pi (\CK_{i+1},\CK_i)\Biggr)^\vee \arrow{r} & \BL_{\QQ},
\end{tikzcd}
\]
where we set $\CK_0 = E_{\QQ}$. In particular, $\QQ$ admits a virtual fundamental class and a virtual structure sheaf 
\[
[\QQ]^{\vir} \in A_{\vd}(\QQ), \qquad \OO^{\vir}_{\QQ} \in K_0(\QQ).
\]
Setting $s_{l+1}=r$ and $s_0=0$, the virtual class lives in dimension
\[
\vd =  (1-g)\cdot \dim \Flag(\ns,r)+ \sum_{i=1}^l d_i (s_{i+1}-s_{i-1})-\deg E\cdot s_l.
\]
\end{thm}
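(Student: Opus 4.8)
The plan is to build $\BE_\QQ$ by induction on the length $l$ of the flag, using the tower of forgetful morphisms together with the (relative) perfect obstruction theory on a Quot scheme of a curve; the cone presentation will then drop out of the octahedral axiom and the virtual dimension from Riemann--Roch on $C$. First I would organise the tower. For $l\geqslant 2$, forgetting the innermost kernel $\CK_l$ gives a morphism $g\colon \QQ=\Quot_C(E,\ns,\nd)\to\QQ'=\Quot_C(E,(s_1,\dots,s_{l-1}),(d_1,\dots,d_{l-1}))$, and I would first check that $g$ realises $\QQ$ as the relative Quot scheme $\Quot_{C\times\QQ'/\QQ'}(\CK_{l-1})$ of $\QQ'$-flat quotients of the universal kernel $\CK_{l-1}$ of rank $s_l-s_{l-1}$ and degree $d_l-d_{l-1}$: on $T$-points, a flat family of flags terminating in $\CK_{l-1}^{T}$ is exactly a $T$-point of $\QQ'$ together with a $T$-flat subsheaf $\CK_l^{T}\subseteq\CK_{l-1}^{T}$ of the prescribed invariants, and $\CK_{l-1}$ is itself $\QQ'$-flat, being the kernel of $E_{\QQ'}\onto\CT_{l-1}$. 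Because each $\CK_i$ is a subsheaf of the locally free $E$ on the smooth curve $C$, it is $\QQ$-flat and fibrewise locally free, hence locally free on $C\times\QQ$; therefore every $\RRlHom_\pi(\CK_{i+1},\CK_i)$ and $\RRlHom_\pi(\CK_i,\CK_i)$ is perfect of Tor-amplitude $[0,1]$, which is exactly what makes the statement an honest perfect obstruction theory in amplitude $[-1,0]$.

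Next comes the induction. The base case $l=1$ is the classical perfect obstruction theory of the Quot scheme of a curve, $\RRlHom_\pi(\CK_1,\CT_1)^\vee=\Cone\bigl(\RRlHom_\pi(\CK_1,\CK_1)\to\RRlHom_\pi(\CK_1,E_\QQ)\bigr)^\vee\to\BL_\QQ$. For the inductive step, the relative Quot scheme $g\colon\QQ\to\QQ'$ carries its relative perfect obstruction theory $\RRlHom_\pi(\CK_l,\CK_{l-1}/\CK_l)^\vee\to\BL_{\QQ/\QQ'}$; assuming $\QQ'$ already carries the asserted obstruction theory $\BE_{\QQ'}$, I would invoke the standard composition of a perfect obstruction theory on the base with a relative one along $g$ to produce $\BE_\QQ\to\BL_\QQ$ lying over the exact triangle $g^*\BL_{\QQ'}\to\BL_\QQ\to\BL_{\QQ/\QQ'}\to[1]$, whose dual is
\[
\RRlHom_\pi(\CK_l,\CK_{l-1}/\CK_l)\longrightarrow\BE_\QQ^\vee\longrightarrow g^*\BE_{\QQ'}^\vee\longrightarrow[1].
\]
Perfectness in $[-1,0]$ is inherited since both outer terms are.

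To match $\BE_\QQ^\vee$ with the cone in the statement, I would note that the map $\sigma$ there --- "difference of pre- and post-composition with the flag inclusions, with $\CK_0=E$ held fixed" --- is block triangular over its $(l-1)$-truncation $\sigma'$, with off-diagonal block the restriction $\RRlHom_\pi(\CK_{l-1},\CK_{l-1})\to\RRlHom_\pi(\CK_l,\CK_{l-1})$ along $\CK_l\hookrightarrow\CK_{l-1}$ and new diagonal block the post-composition $\RRlHom_\pi(\CK_l,\CK_l)\to\RRlHom_\pi(\CK_l,\CK_{l-1})$. The octahedral axiom then gives an exact triangle $\Cone\bigl(\RRlHom_\pi(\CK_l,\CK_l)\to\RRlHom_\pi(\CK_l,\CK_{l-1})\bigr)\to\Cone(\sigma)\to\Cone(\sigma')\to[1]$, and applying $\RRlHom_\pi(\CK_l,-)$ to $0\to\CK_l\to\CK_{l-1}\to\CK_{l-1}/\CK_l\to0$ identifies the first term with $\RRlHom_\pi(\CK_l,\CK_{l-1}/\CK_l)$; this is the displayed triangle, so by induction $\BE_\QQ^\vee\cong\Cone(\sigma)$ and $\BE_\QQ\cong\Cone(\sigma)^\vee$. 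The classes $[\QQ]^{\vir}$ and $\OO^{\vir}_\QQ$ then exist by \cite{BF_normal_cone} and the usual construction of the virtual structure sheaf from a perfect obstruction theory. Finally, the virtual dimension equals the rank of $\BE_\QQ^\vee=\Cone(\sigma)$, that is $\sum_{i=0}^{l-1}\chi(C,\CK_{i+1}^\vee\otimes\CK_i)-\sum_{i=1}^{l}\chi(C,\CK_i^\vee\otimes\CK_i)$, all sheaves being locally free on $C$; writing $\deg\CK_i=\deg E-d_i$ (with $d_0=0$) and applying Riemann--Roch on $C$, the coefficient of $1-g$ collapses to $\sum_{i=1}^{l}(r-s_i)(s_i-s_{i-1})=\dim\Flag(\ns,r)$ and the rest to $\sum_{i=1}^{l}d_i(s_{i+1}-s_{i-1})-s_l\deg E$ once $s_0=0$ and $s_{l+1}=r$ are substituted, which is the asserted value.

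I expect the main obstacle to be the gluing step: one must produce the composed obstruction theory compatibly with the Atiyah-class data on the base $\QQ'$ and on the fibres of $g$ --- equivalently, verify that the connecting maps of the two triangles above agree --- and it is this compatibility, rather than the tower or the Riemann--Roch bookkeeping, that carries the weight of the argument. Alternatively, one can construct the morphism $\Cone(\sigma)^\vee\to\BL_\QQ$ directly from the Atiyah class of the universal flag $\CK_\bullet$ regarded as a filtered coherent sheaf relative to the fixed outer term $E$, and then verify the perfect-obstruction-theory axioms fibrewise via the deformation theory of nested subsheaves of $E$ on $C$; that route makes the cone description tautological but shifts the difficulty to setting up the Atiyah class of a filtered object.
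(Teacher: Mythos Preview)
Your inductive tower approach is sound and genuinely different from the paper's. Rather than induct on $l$, the paper realises $\QQ$ in one shot as an open subscheme of an abelian cone over the \emph{smooth} stack $B=\prod_{i=1}^l\Bun_C(r_i,\deg E-d_i)$, via the map $h\colon\QQ\to B$ sending a flag to its tuple of kernels $([K_1],\dots,[K_l])$: the locus of tuples of injective maps $K_{i+1}\hookrightarrow K_i$ (with $K_0=E$) is open in $\Spec_{\OO_B}\Sym\RR^1p_*F$ for a suitable sheaf $F$ on $B\times C$, and such abelian cones carry a canonical relative perfect obstruction theory. Since $B$ is smooth, converting this relative theory to an absolute one is the easy direction, and the cone presentation drops out of $h^*\Omega_B[1]=\bigoplus_i\RRlHom_\pi(\CK_i,\CK_i)^\vee$. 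This choice of base is precisely what sidesteps the obstacle you correctly isolate at the end: your base $\QQ'$ is not smooth, so welding $\BE_{\QQ'}\to\BL_{\QQ'}$ to the relative theory along $g$ demands a compatibility of connecting maps that is not automatic in a triangulated category---that step is the real content of your route, whereas for the paper it never arises. Your argument would go through if upgraded to derived schemes (where the gluing is tautological) or via the filtered Atiyah class you suggest, and it has the virtue of modularity; the paper's one-shot construction is simply shorter here. The virtual-dimension computation is the same Riemann--Roch bookkeeping either way.
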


In the statement, $\Flag(\ns,r)$ refers to the partial flag variety (cf.~\Cref{sec:partial-flag}), whose dimension is $\sum_{i=1}^ls_i(s_{i+1}-s_i)$, as recalled in \eqref{dim-partial-flag}. It can be seen as a hyperquot scheme over a point, cf.~\Cref{example:hyperquots}.

The perfect obstruction theory of \Cref{mainthm-POT} allows one to study the classical deformation theory of points in the hyperquot scheme (cf.~\Cref{cor:tangent-obs}). We give in \Cref{prop: smooth l=1} a sufficient condition for smoothness (and unobstructedness) in the unnested case. In the nested case, we obtain the following result, generalising the known case of the trivial bundle over $\BP^1$.

\begin{thm}[{\Cref{prop: smooth nested flag}}]
Let $L$ be any line bundle on $\BP^1$ and $E=L\otimes \left(\OO^{\oplus n}\oplus \OO(1)^{\oplus m}\right)$ for some $n, m$. Then the hyperquot scheme $\Quot_{\BP^1}(E, \ns, \nd) $ is smooth and unobstructed for all $\nd$ and for all $\ns$.
\end{thm}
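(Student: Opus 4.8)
The plan is to derive the statement from the perfect obstruction theory of \Cref{mainthm-POT} by proving that the obstruction space vanishes at every point; the two features that make this possible are that every vector bundle on $\BP^1$ is a direct sum of line bundles and that $\BP^1$ has cohomological dimension one. First, tensoring each quotient sequence by $L^{-1}$ gives an isomorphism $\Quot_{\BP^1}(E,\ns,\nd)\cong\Quot_{\BP^1}(\OO^{\oplus n}\oplus\OO(1)^{\oplus m},\ns,\nd')$ with $\nd'=(d_i-s_i\deg L)_i$, so we may assume $L=\OO$ and $E=\OO^{\oplus n}\oplus\OO(1)^{\oplus m}$. Write $\QQ=\Quot_{\BP^1}(E,\ns,\nd)$ and $K_0:=E$. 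By \Cref{mainthm-POT} and the associated tangent--obstruction theory (\Cref{cor:tangent-obs}) it suffices to show that the obstruction space $\Ob_p$ is zero for every closed point $p=[K_l\hookrightarrow\cdots\hookrightarrow K_1\hookrightarrow K_0\twoheadrightarrow\cdots]$ of $\QQ$: then the tangent spaces have constant dimension $\vd$, so $\QQ$ is smooth of dimension $\vd$, and $\BE_{\QQ}\simeq\Omega_{\QQ}\simeq\BL_{\QQ}$, whence $[\QQ]^{\vir}=[\QQ]$ and $\OO_{\QQ}^{\vir}=\OO_{\QQ}$.

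Restricting to $p$ the cone that defines $\BE_{\QQ}^\vee$ in \Cref{mainthm-POT} (derived restriction commutes with cones and the $\CK_i$ are $\QQ$-flat), one finds $\BE_{\QQ}^\vee\big|_{p}\simeq\Cone(\alpha)$, where
\[
\alpha\colon\ \bigoplus_{j=1}^{l}\RHom_{\BP^1}(K_j,K_j)\ \longrightarrow\ \bigoplus_{j=1}^{l}\RHom_{\BP^1}(K_j,K_{j-1})
\]
is the relative deformation differential of the filtration $K_l\subseteq\cdots\subseteq K_0$: on the $j$-th summand of the target it is the sum of post-composition with $\iota_j\colon K_j\hookrightarrow K_{j-1}$ (defined on $\RHom_{\BP^1}(K_j,K_j)$) and of pre-composition with $\iota_j$ (defined on $\RHom_{\BP^1}(K_{j-1},K_{j-1})$, present only for $j\ge2$). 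Taking $\HH^1$ and using $\Ext^{\ge2}_{\BP^1}(-,-)=0$ on the curve $\BP^1$, we obtain $\Ob_p=\coker(\alpha_1)$ for the map
\[
\alpha_1\colon\ \bigoplus_{j=1}^{l}\Ext^1_{\BP^1}(K_j,K_j)\ \longrightarrow\ \bigoplus_{j=1}^{l}\Ext^1_{\BP^1}(K_j,K_{j-1})
\]
given by $\alpha_1(\epsilon)_j=(\iota_j)_*(\epsilon_j)+(\iota_j)^*(\epsilon_{j-1})$ (with $\epsilon_0:=0$). So everything reduces to the surjectivity of $\alpha_1$.

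This follows from two observations. (i) For each $j\ge2$, applying $\Hom_{\BP^1}(-,K_{j-1})$ to $0\to K_j\xrightarrow{\iota_j}K_{j-1}\to K_{j-1}/K_j\to0$ and using $\Ext^2_{\BP^1}(K_{j-1}/K_j,K_{j-1})=0$ shows that $(\iota_j)^*\colon\Ext^1_{\BP^1}(K_{j-1},K_{j-1})\to\Ext^1_{\BP^1}(K_j,K_{j-1})$ is surjective. (ii) $\Ext^1_{\BP^1}(K_1,E)=0$: as a subsheaf of the locally free $E$ on a smooth curve $K_1$ is locally free, so $K_1\cong\bigoplus_k\OO(a_k)$; since the injection $\OO(a_k)\hookrightarrow K_1\hookrightarrow E=\OO^{\oplus n}\oplus\OO(1)^{\oplus m}$ is nonzero we must have $a_k\le1$ for all $k$, hence $\lHom(K_1,E)$ is a direct sum of line bundles of degree $\ge-1$ and $\Ext^1_{\BP^1}(K_1,E)=H^1(\BP^1,\lHom(K_1,E))=0$. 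Granting (i)--(ii), one solves $\alpha_1(\epsilon)=\eta$ for an arbitrary $\eta=(\eta_j)_j$ by downward recursion: set $\epsilon_l:=0$, and for $j=l,l-1,\dots,2$ choose (using (i)) $\epsilon_{j-1}$ with $(\iota_j)^*(\epsilon_{j-1})=\eta_j-(\iota_j)_*(\epsilon_j)$. Then $\alpha_1(\epsilon)_j=\eta_j$ for all $j\ge2$; and for $j=1$ one has $\alpha_1(\epsilon)_1=(\iota_1)_*(\epsilon_1)\in\Ext^1_{\BP^1}(K_1,K_0)=0$ by (ii), while $\eta_1=0$ for the same reason, so $\alpha_1(\epsilon)_1=\eta_1$. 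Hence $\alpha_1$ is surjective, $\Ob_p=0$, and $\QQ$ is smooth and unobstructed.

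The step I expect to be the crux is the identification of the differential $\alpha_1$ — i.e. unpacking \Cref{cor:tangent-obs} — together with the conceptual point behind (i)--(ii): one should \emph{not} try to bound the individual pieces $\Ext^1_{\BP^1}(K_j,K_{j-1}/K_j)$, since the subbundles $K_i\subseteq E$ need not be balanced and may carry summands of arbitrarily negative degree; what rescues the argument is that on a curve the deformations of the \emph{larger} term $K_{j-1}$ already surject onto all of $\Ext^1_{\BP^1}(K_j,K_{j-1})$, so that the only genuine obstruction lives at the top of the flag, where it is annihilated by the shape of $E$.
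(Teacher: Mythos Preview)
Your argument is correct and takes a genuinely different route from the paper's. The paper reduces to torus-fixed points via the $\TT$-action of \Cref{sec: torus action}: by upper semicontinuity of the obstruction dimension it suffices to check vanishing at a fixed point, where the kernels and quotients split as $K_i=\bigoplus_\alpha K_{i,\alpha}$, $T_i=\bigoplus_\alpha T_{i,\alpha}$ with each $K_{i,\alpha}$ either $0$ or a line bundle of degree at most $\deg L_\alpha$ and each $T_{i,\alpha}$ either torsion or equal to $L_\alpha$. The constraint $\lvert\deg L_\alpha-\deg L_\beta\rvert\le 1$ then forces \emph{all} the groups $\Ext^1_{\BP^1}(K_{i,\alpha},T_{j,\beta})$ to vanish, so the obstruction embeds into $\bigoplus_i\Ext^1(K_i,T_i)=0$ via the long exact sequence of \eqref{delta-restricted}. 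You instead work at an arbitrary point and use the kernel presentation $\BE_\QQ^\vee=\Cone(\alpha)$ from \Cref{mainthm-POT} directly: your recursion (i), which needs only that $\BP^1$ is a curve, propagates the whole obstruction up to the top inclusion $K_1\hookrightarrow E$, and (ii) annihilates it there using only the degree bound on the summands of $E$. What your approach buys is that no equivariant structure is invoked and the argument isolates exactly \emph{where} the hypothesis on $E$ is used (only at the top of the flag); what the paper's approach buys is that at fixed points every relevant $\Ext^1$ is already zero, so no recursion is needed and the explicit shape of the differential never enters. One small remark: the identification of $\alpha_1$ with pre/post-composition by the $\iota_j$ (up to sign) is not spelled out in the paper --- the map in \Cref{thm: pot 2 nest} is defined abstractly as $\varepsilon\circ\phi_h$ --- but it is the standard obstruction map for lifting a chain of inclusions along infinitesimal deformations of the bundles $K_j$, and your argument only uses that the component $\Ext^1(K_{j-1},K_{j-1})\to\Ext^1(K_j,K_{j-1})$ is a nonzero multiple of $(\iota_j)^\ast$.
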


Assume that $E=\bigoplus_{\alpha=1}^r L_\alpha$ splits into a sum of line bundles on $C$. Then $\Quot_C(E,\ns, \nd)$ admits the action of the algebraic torus $\TT=\BG_m^r$ as in \cite{Bifet}. We provide in \Cref{sec: comb fixed} a combinatorial description of the fixed locus, indexing it by permutations $\sigma \in P_{\boldit{r}}$ (cf.~\Cref{sec: l geq 1}). We are then in a position to apply the Białynicki-Birula decomposition to the hyperquot scheme, provided it is smooth. This allows us to determine a closed formula for the motivic generating function
\[
\mathsf{Z}_{C,E, \ns}(\nq)=\sum_{\nd}\,\bigl[\Quot_C(E, \ns, \nd)\bigr]\boldit{q}^{\nd}\in K_0(\Var_{\bfk})(\!( q_1, \dots, q_l)\!),
\]
in terms of the motivic zeta function
\[
\zeta_C(t) = \sum_{n\geqslant 0}\,[C^{(n)}]t^n
\]
of the curve and two combinatorial exponents $h^\sigma_0$ and $h^\sigma_2$, defined in \eqref{eqn:h_0,h_2}.
The result is the following.

\begin{thm}[{\Cref{thm: theorem motivic all g}, \Cref{thm: fact genus 0}}]\label{thm:motivic-partition-function}
Let $C$ be a smooth projective curve of genus $g$ and $E=\bigoplus_{\alpha=1}^rL_\alpha$ a split locally free sheaf such that the hyperquot scheme $\Quot_C(E, \ns, \nd)$ is smooth and unobstructed. Set $r_i = r-s_i$ for $i=1,\ldots,l$ and $r_{l+1}=0$. There is an identity
\begin{multline*}
\mathsf{Z}_{C,E, \ns}(\nq)=\sum_{\sigma\in P_{ \nr}}\prod_{j=1}^{l}\left(\BL^{h^\sigma_{2}(j,g, \deg L_1,\dots,  \deg L_r)} q_j^{ \sum_{\alpha=r_j+1}^{r}\deg L_{\sigma(\alpha)}}\right) \\
\cdot \prod_{1\leqslant i\leqslant j\leqslant l}\prod_{\alpha=r_{j+1}+1}^{r_j}\zeta_C\left(\BL^{ h^\sigma_0(i,\alpha)+r_i-r_j+\alpha-r_{j+1}-1}q_i\cdots q_j \right).
\end{multline*}
In particular, $\mathsf{Z}_{C,E, \ns}(\nq)$ is a rational function. 

For the pair $(C,E) = (\BP^1,\OO^{\oplus r})$, one has the identity
\[
\mathsf{Z}_{\BP^1,\OO^{\oplus r}, \ns}(\nq)=  [\Flag(\ns,r)] \prod_{1\leqslant i\leqslant j \leqslant l}\prod_{\alpha=r_{j+1}+1}^{r_j}\frac{1}{\left(1-\BL^{r_i-\alpha} q_i\cdots q_j \right)\left(1-\BL^{r_{i-1}-\alpha+1} q_i\cdots q_j \right)}.
\]
\end{thm}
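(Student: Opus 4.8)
The plan is to exploit the torus action together with the Białynicki--Birula (BB) decomposition. Since $E=\bigoplus_{\alpha=1}^r L_\alpha$ is split, the torus $\TT=\BG_m^r$ acts on every hyperquot scheme $\Quot_C(E,\ns,\nd)$ by scaling the summands, as in \cite{Bifet}. Assuming the scheme is smooth (it is always projective), a BB stratification relative to a generic one-parameter subgroup $\lambda\colon\BG_m\to\TT$, whose fixed locus coincides with $\Quot_C(E,\ns,\nd)^\TT$, exhibits it as a disjoint union of affine bundles over the connected components of $\Quot_C(E,\ns,\nd)^\TT$, so that in $K_0(\Var_\bfk)$
\[
\bigl[\Quot_C(E,\ns,\nd)\bigr]=\sum_{F}\BL^{w(F)}[F],
\]
where $w(F)$ is the rank of the $\lambda$-positive part of the tangent bundle along $F$ (a constant, since $F$ is connected). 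Multiplying by $\nq^{\nd}$ and summing over $\nd$, the computation of $\mathsf{Z}_{C,E,\ns}(\nq)$ then reduces to three tasks: describe the fixed components, compute the weights $w(F)$, and reorganise the resulting sum.

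First I would describe the fixed locus. A $\TT$-fixed flag $\CK_l\hookrightarrow\cdots\hookrightarrow\CK_1\hookrightarrow E$ has each $\CK_i$ $\TT$-invariant, hence $\CK_i=\bigoplus_\alpha\CK_{i,\alpha}$ with $\CK_{i,\alpha}=\CK_i\cap L_\alpha$ equal to $0$ or to $L_\alpha(-D_{i,\alpha})$ for a unique effective divisor $D_{i,\alpha}$ on $C$; the nesting forces the subsets $S_i=\{\alpha:\CK_{i,\alpha}\neq0\}$ to be decreasing with $|S_i|=r_i$ and forces $D_{1,\alpha}\leqslant D_{2,\alpha}\leqslant\cdots$. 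The choice of $S_\bullet$ --- equivalently, of the level $\ell(\alpha)=\max\{i:\alpha\in S_i\}\in\{0,\dots,l\}$ of each summand, subject to $\#\{\alpha:\ell(\alpha)\geqslant i\}=r_i$ --- is exactly an element $\sigma\in P_{\nr}$; and for fixed $\sigma$ the remaining data are the independent effective divisors $D_{i,\alpha}-D_{i-1,\alpha}$ for $1\leqslant i\leqslant\ell(\alpha)$, that is, a point of $\prod_{i,\alpha}C^{(e_{i,\alpha})}$ with $e_{i,\alpha}=\deg(D_{i,\alpha}-D_{i-1,\alpha})$. Thus the full fixed locus is $\bigsqcup_{\sigma\in P_{\nr}}\bigsqcup_{(e_{i,\alpha})}\prod_{i,\alpha}C^{(e_{i,\alpha})}$. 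A degree count using $d_i=\deg T_i$ and $T_i=\bigoplus_\alpha L_\alpha/\CK_{i,\alpha}$ gives $\nq^{\nd}=\bigl(\prod_j q_j^{\sum_{\ell(\alpha)<j}\deg L_\alpha}\bigr)\prod_{i,\alpha}(q_i\cdots q_{\ell(\alpha)})^{e_{i,\alpha}}$; after relabelling the summands by level (the content of $\sigma\in P_{\nr}$), the first factor becomes the $L$-monomial $\prod_j q_j^{\sum_{\alpha=r_j+1}^{r}\deg L_{\sigma(\alpha)}}$ of the statement, and the slot $e_{i,\alpha}$ with $\ell(\alpha)=j$ carries the variable $q_i\cdots q_j$.

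Next I would compute the weights. Since the hyperquot scheme is smooth and unobstructed, $\BE_{\Quot_C(E,\ns,\nd)}^\vee$ of \Cref{mainthm-POT} is quasi-isomorphic to the tangent bundle, concentrated in degree $0$. At a $\TT$-fixed point it splits over the line-bundle summands, so the dimension of each $\TT$-weight space of the tangent space is an alternating sum of Euler characteristics of the $\RRlHom_\pi$-terms of \Cref{mainthm-POT}, restricted to the relevant $\TT$-weight; computed by Riemann--Roch on $C$ --- every $\CK_{i,\alpha}$ being a line bundle or zero --- each such Euler characteristic, hence each weight-space dimension, is affine-linear in the divisor degrees $\deg D_{i,\alpha}$, equivalently in the increments $e_{i,\alpha}$. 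Summing these dimensions over the characters positive for $\lambda$, a set depending only on $\sigma$, yields $w(F)=\sum_j w_0^{\sigma}(j)+\sum_{1\leqslant i\leqslant j\leqslant l}\sum_{\alpha:\,\ell(\alpha)=j}c^{\sigma}(i,\alpha)\,e_{i,\alpha}$ for explicit integers; one then checks that the constant part equals $\sum_j h^\sigma_2(j,g,\deg L_1,\dots,\deg L_r)$ and that $c^{\sigma}(i,\alpha)=h^\sigma_0(i,\alpha)+r_i-r_j+\alpha-r_{j+1}-1$, in the notation of \eqref{eqn:h_0,h_2}.

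Finally, substituting into the BB formula and summing over $\nd$, each increment slot $(i,\alpha)$ with $\ell(\alpha)=j$ contributes $\sum_{e\geqslant0}[C^{(e)}]\,\BL^{c^{\sigma}(i,\alpha)\,e}\,(q_i\cdots q_j)^{e}=\zeta_C\bigl(\BL^{c^{\sigma}(i,\alpha)}q_i\cdots q_j\bigr)$, while the $\BL^{h^\sigma_2}$-powers and the $L$-monomials factor out, producing the displayed product over $\sigma\in P_{\nr}$; rationality of $\mathsf{Z}_{C,E,\ns}(\nq)$ is then immediate from the rationality of the motivic zeta function of a curve. For $(C,E)=(\BP^1,\OO^{\oplus r})$ one has $g=0$ and $\deg L_\alpha=0$ for all $\alpha$, so $\zeta_{\BP^1}(t)=\sum_n[\BP^n]t^n=((1-t)(1-\BL t))^{-1}$ and $\Quot_{\BP^1}(\OO^{\oplus r},\ns,\boldit{0})=\Flag(\ns,r)$; substituting the degenerate values of $h^\sigma_0$ and $h^\sigma_2$ and carrying out a combinatorial resummation over $P_{\nr}$ --- during which the constant $\BL$-powers assemble into the cellular class $[\Flag(\ns,r)]$ --- collapses the sum to the stated closed product (this is \Cref{thm: fact genus 0}). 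The hard part will be the weight computation: determining exactly which $\RRlHom$-summands of the tangent complex carry positive $\lambda$-weight at a fixed point and evaluating their Euler characteristics in closed form, while keeping the multi-index bookkeeping (level $j$, slot $i\leqslant j$, summand $\alpha$, class $\sigma$) consistent with the combinatorial exponents $h^\sigma_0,h^\sigma_2$; the $\BP^1$ resummation over $P_{\nr}$ is a secondary, purely combinatorial, hurdle.
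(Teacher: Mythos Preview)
Your proposal is correct and follows essentially the same approach as the paper: Białynicki--Birula decomposition for a generic one-parameter subgroup, description of the fixed locus indexed by $\sigma\in P_{\nr}$, computation of the positive-weight part of the tangent space at fixed points via Riemann--Roch on the restricted obstruction complex, and reassembly into products of $\zeta_C$; the $\BP^1$ case then follows by substituting $\zeta_{\BP^1}$ and a combinatorial resummation (which the paper outsources to \cite[Prop.~5]{Chen_hyperquot}). The only organisational difference is that you parametrise fixed components directly by the increments $e_{i,\alpha}$, whereas the paper first packages them into nested Hilbert schemes $C^{[\nn_\alpha]}$ and then invokes \cite[Thm.~4.2]{MR_nested_Quot} to unfold their generating series into $\zeta_C$-products; your shortcut is equivalent since $C^{[\nn_\alpha]}\cong\prod_i C^{(n_{i,\alpha}-n_{i-1,\alpha})}$, and the identity $c^\sigma(i,\alpha)=\sum_{k=i}^j h^\sigma_1(k,\alpha)$ you would need is exactly the paper's Lemma~\ref{lemma: comb with h0}.
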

Our formulas fit nicely in the philosophical idea that \emph{rank $r$ theories are determined by the rank $1$ theory}. Up to now, this phenomenon has been observed in several places, see for instance the motivic computations \cite{MR_nested_Quot, CR_framed_motivic, CRR_higher_rank}  and the recent results in  Donaldson--Thomas theory \cite{FMR_higher_rank, fasola2023tetrahedron, FT_0, FT_1}  and  String Theory \cite{NP_colors, dZNPZ_playing_index_M_theory, Cir_M2_index}.

We also notice that the computation of the motivic partition functions in \Cref{thm:motivic-partition-function} allows one to spot the connectedness of the moduli space, by first specialising to Poincar\'e polynomials $\mathsf P(\Quot_C(E,\ns,\nd),q)$ and then extracting the constant coefficient, which coincides with $b_0(\Quot_C(E,\ns,\nd))$. This is carried out in \Cref{cor:connectedness} for $(C,E) = (\BP^1,\OO^{\oplus r})$.

At the level of Euler characteristics (over the complex field $\bfk=\BC$), we have a complete description of the generating function
\[
\mathsf{Z}^{\mathrm{top}}_{C,E, \ns}(\nq)=\sum_{\nd}\,e_{\mathrm{top}}(\Quot_C(E, \ns, \nd))\boldit{q}^{\nd}\in \BZ(\!( q_1, \dots, q_l)\!),
\]
without any smoothness assumption.

\begin{thm}[{\Cref{thm:euler-top}}]
Let $C$ be a smooth projective complex curve of genus $g$ and $E=\bigoplus_{\alpha=1}^rL_\alpha$ a split locally free sheaf. There is an identity
\begin{align*}
\mathsf{Z}^{\mathrm{top}}_{C,E, \ns}(\nq)&=\left(\sum_{\sigma\in P_{ \nr}}\prod_{j=1}^{l} q_j^{ \sum_{\alpha=r_j+1}^{r}\deg L_{\sigma(\alpha)}}\right)\cdot\prod_{1\leqslant i\leqslant j\leqslant l}(1-q_i\cdots q_j)^{(2g-2)(r_j-r_{j+1})}.
    \end{align*}
\end{thm}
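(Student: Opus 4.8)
The plan is to localise the computation at the torus-fixed locus, exactly as for the motivic partition function, the crucial point being that this time no smoothness is needed. Since $E=\bigoplus_{\alpha=1}^{r}L_\alpha$ is split, the torus $\TT=\BG_m^r$ acts on $\QQ_\nd:=\Quot_C(E,\ns,\nd)$ for every $\nd$, and it is standard that $e_{\mathrm{top}}(\QQ_\nd)=e_{\mathrm{top}}(\QQ_\nd^\TT)$: the complement $\QQ_\nd\setminus\QQ_\nd^\TT$ is swept out by free $\BG_m$-orbits and can be stratified into $\TT$-invariant locally closed subvarieties each of which is fibred in affine tori over a lower-dimensional base, so by additivity of $e_{\mathrm{top}}$ and $e_{\mathrm{top}}(\BG_m)=0$ it contributes nothing. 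This is precisely what lets us drop the smoothness hypothesis of \Cref{thm:motivic-partition-function}, which there was needed to run the Białynicki--Birula decomposition. It then remains to evaluate $\mathsf{Z}^{\mathrm{top}}_{C,E,\ns}(\nq)=\sum_\nd e_{\mathrm{top}}(\QQ_\nd^\TT)\,\nq^\nd$.

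Next I would invoke the combinatorial description of the fixed locus from \Cref{sec: comb fixed}. A $\TT$-fixed flag $K_l\hookrightarrow\cdots\hookrightarrow K_1\hookrightarrow E$ has $K_i=\bigoplus_\alpha(K_i\cap L_\alpha)$, where each $K_i\cap L_\alpha\subseteq L_\alpha$ is either $0$ or of the form $L_\alpha(-D_i^\alpha)$ for a unique effective divisor $D_i^\alpha$ on $C$. The ranks force, for $j=0,\dots,l$ with $A_0=\{1,\dots,r\}$ and $A_{l+1}=\emptyset$, subsets $A_j\subseteq\{1,\dots,r\}$ with $|A_j|=r_j$ and $A_l\subseteq\cdots\subseteq A_1$, characterised by: $K_i\cap L_\alpha$ has rank $1$ exactly when $\alpha\in A_i$. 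Such nested subsets are precisely what $P_{\nr}$ indexes (say $A_j=\sigma(\{1,\dots,r_j\})$, well defined modulo the stabilising Young subgroup of $S_r$). For $\alpha$ lying in the block $B_j:=A_j\setminus A_{j+1}$ with $j\ge 1$, the chain $0\le D_1^\alpha\le\cdots\le D_j^\alpha$ is, via successive differences $E_i^\alpha:=D_i^\alpha-D_{i-1}^\alpha$, the same as a point of $\prod_{i=1}^{j}C^{(\deg E_i^\alpha)}$, while for $\alpha\in B_0$ there is no further data. Hence each connected component of $\QQ_\nd^\TT$—fixing $\sigma$ and the degrees $e_i^\alpha:=\deg E_i^\alpha$—is a product of symmetric powers $\prod_{j\ge 1}\prod_{\alpha\in B_j}\prod_{i=1}^{j}C^{(e_i^\alpha)}$.

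Finally I would assemble the generating function. The sequences $0\to K_j\to E\to T_j\to 0$ give $d_j=\deg T_j=\sum_{\alpha\notin A_j}\deg L_\alpha+\sum_{\alpha\in A_j}\deg D_j^\alpha$; the first summand is a $\sigma$-dependent constant producing the prefactor $\prod_{j=1}^{l}q_j^{\sum_{\alpha=r_j+1}^{r}\deg L_{\sigma(\alpha)}}$, and unravelling the second shows that the divisor $E_i^\alpha$ with $\alpha\in B_{j_0}$ contributes to $d_j$ precisely for $i\le j\le j_0$, so it is recorded by the monomial $q_iq_{i+1}\cdots q_{j_0}$. Inserting the Macdonald identity $\sum_{n\ge 0}e_{\mathrm{top}}(C^{(n)})t^n=(1-t)^{2g-2}$—the $\BL\to 1$ specialisation of $\zeta_C$—and summing over all non-negative $e_i^\alpha$ turns each factor $C^{(e_i^\alpha)}$ into $(1-q_i\cdots q_{j_0})^{2g-2}$; multiplying over the $|B_{j_0}|=r_{j_0}-r_{j_0+1}$ indices $\alpha\in B_{j_0}$ and then over $1\le i\le j_0\le l$ produces $\prod_{1\le i\le j\le l}(1-q_i\cdots q_j)^{(2g-2)(r_j-r_{j+1})}$, which no longer depends on $\sigma$. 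Summing over $\sigma\in P_{\nr}$ then factors out the stated prefactor, yielding the identity; equivalently, this is what the fixed-point contributions of \Cref{thm: theorem motivic all g} become at $\BL=1$, except that nothing beyond the existence of the $\TT$-action was used.

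The difficulty is bookkeeping, not conceptual: one must align the indexing set $P_{\nr}$ and the degree conventions with those of \Cref{sec: comb fixed}, check that the correspondence between $\TT$-fixed flags and pairs $(\sigma,(E_i^\alpha))$ is a genuine bijection with no overcounting, and confirm the monomial assignment sending $E_i^\alpha$ to $q_i\cdots q_{j_0}$. The one genuinely new input relative to the motivic theorem—and the reason the smoothness hypothesis disappears—is the equality $e_{\mathrm{top}}(X)=e_{\mathrm{top}}(X^\TT)$ for a torus acting on a complex variety, which should be stated and used with a little care.
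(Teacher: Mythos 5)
Your proposal is correct and follows essentially the same route as the paper: localise $e_{\mathrm{top}}$ at the $\TT$-fixed locus, use the combinatorial description of the fixed components indexed by $\sigma\in P_{\nr}$ and tuples $\nn_\alpha$, and package the resulting sum with Macdonald's identity $\sum_n e_{\mathrm{top}}(C^{(n)})t^n=(1-t)^{2g-2}$. The only cosmetic difference is that you re-derive inline the auxiliary identity $\sum_{\nn_\alpha}q_1^{n_{1,\alpha}}\cdots q_j^{n_{j,\alpha}}e_{\mathrm{top}}(C^{[\nn_\alpha]})=\prod_{i=1}^j(1-q_i\cdots q_j)^{2g-2}$ by unrolling $C^{[\nn_\alpha]}$ into a product of symmetric powers, where the paper simply cites it.
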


\subsection{Relation with Enumerative Geometry}\label{sec:existing-work}
We conclude the introduction by pointing out some interesting applications of our results, and how they should be considered in the more general framework of enumerative geometry.

\subsubsection{Virtual invariants}
There has been a recent interest in defining and studying virtual invariants of Quot schemes on varieties of various dimensions, usually motivated by parallel computations in topological string theory, see e.g.~\cite{FMR_higher_rank, fasola2023tetrahedron,Lim_more_vir_sur, Lim_vir, bojko2022wallcrossing}. It would be of particular interest to extend to the hyperquot setting the \emph{Vafa--Intriligator formulas} proved in \cite{MO_duke} --- see also \cite{Sin} for an isotropic version of the latter --- and the tautological integrals, studied e.g.~in \cite{OP_quot_schemes_curves_surfaces, OS_taut}. As a first step of this program, we prove in Appendix \ref{app:functoriality-VFC} some functorial properties of the virtual fundamental class, and in Appendix \ref{app: chi_y} we discuss some structural properties of the virtual $\chi_{-y}$-genus, leaving more general computations for future work.
Finally, it is worth mentioning that the hyperquot scheme attached to a curve $C$ with trivial bundle $E=\OO_C^{\oplus r}$ coincides with the (markings-free) quasimap compactification of the space of maps to the partial flag variety, see \cite[Sec.~7.2]{zbMATH06244048} and in particular Example 7.2.3 there.\footnote{We thank I. Ciocan-Fontanine for communicating this to us.} Moreover, in loc.~cit.~it is shown that the moduli stack of quasimaps admits a perfect obstruction theory in greater generality.
\subsubsection{Higher dimensional varieties}
Moduli spaces of sheaves with nesting conditions should not be regarded as mere variations of classical moduli spaces, as they  appear rather  naturally while studying moduli spaces of sheaves on higher dimensional \emph{local} varieties, e.g.~vector bundles on curves and surfaces. The prototypical situation is the following: let $Y$ be a projective variety, and $X$ be the total space of a (split) locally free sheaf on $Y$. Then $X$ admits a natural torus action scaling the fibers, which lifts to the moduli spaces of sheaves on $X$ \cite{Ric_equivariant_Atiyah}. The fixed locus of this torus action  can be usually identified with another moduli space of sheaves on $Y$, satisfying suitable nesting conditions. Therefore, by localisation techniques, enumerative invariants of moduli spaces of sheaves on  $X$ are reduced to enumerative invariants on moduli spaces of \emph{nested} sheaves on $Y$. See for instance applications of this idea for virtual invariants in Donaldson--Thomas and Vafa--Witten theory \cite{Mon_double_nested, KT_vertical, GSY_local, Laa} and for motivic invariants of moduli spaces of Higgs bundles \cite{HGPM_motives_chains, HL_motives_Higgs, HL_formula_motives}.

\subsubsection{Cohomology}
Recently Marian--Neguţ  \cite{marian2023cohomology} described an action of the shifted Yangian of $\mathfrak{sl}_2$ on  the cohomology groups of the Quot schemes of 0-dimensional quotients on a smooth projective curve. It  would be interesting to understand how to generalise this action to our setting with nested  and possibly 1-dimensional quotients, see also \cite{Mochizuki_filt_scheme}. 

\subsection{Conventions}
All \emph{schemes} are of finite type over a fixed algebraically closed field $\bfk$. A \emph{variety} is a reduced separated $\bfk$-scheme. If $Y$ is a scheme and $Y_1,\ldots,Y_s$ are locally closed subschemes of $Y$, we say that they form a (locally closed) \emph{stratification}, denoted `$Y=Y_1\amalg\cdots\amalg Y_s$' with a slight abuse of notation, if the natural morphism of schemes $Y_1\amalg\cdots\amalg Y_s \to Y$ is bijective. We denote by $\derived^{[a,b]}(Y)$ the full subcategory of the derived category of coherent sheaves $\derived(Y)$, consisting of complexes having (possibly) nonvanishing cohomology in the interval $[a,b]$. We use the same notation for the full subcategory $\Perf^{[a,b]}(Y)$ of  the triangulated category $\Perf(Y)$ of perfect complexes on $Y$. We denote the $\OO_Y$-linear dual $\lHom_{\OO_Y}(F,\OO_Y)$ of a coherent sheaf $F\in \Coh(Y)$ by $F^\ast$, reserving the notation $(-)^\vee$ for the derived dual $\RRlHom_{\OO_Y}(-,\OO_Y)$. Given a morphism $\pi\colon Y \to B$, we set $\RRlHom_\pi(-,-) = \RR\pi_\ast \RRlHom(-,-)$. We sometimes write derived pullbacks $\mathbf{L}\pi^\ast$ simply by $\pi^\ast$.

\subsection*{Acknowledgements}
We thank Alina Marian for bringing to our attention Chen's work \cite{Chen_hyperquot}. We are grateful to Shubham Sinha for pointing out \Cref{lemma:POT-on-abelian-cone} to us. We thank Ionu\c{t} Ciocan-Fontanine, Barbara Fantechi, Alina Marian and Shubham Sinha for very helpful discussions. S. M. thanks Eric Chen and Denis Nesterov for useful discussions. S.M. is supported by the Chair or Arithmetic Geometry, EPFL. Both authors are members of GNSAGA of INDAM. 
\section{Hyperquot scheme}
\subsection{Partial flag varieties}\label
{sec:partial-flag}
Fix integers $r,l>0$ and a nondecreasing $l$-tuple of nonnegative integers $\ns=(s_1\leqslant \dots \leqslant s_l)$ such that $s_l\leqslant r$. To ease the notation, we set $s_0=0$, $s_{l+1}=r$ and $r_i=r-s_i$ for all $i=0, \dots, l+1$.

We denote by  $\Flag(\ns,r)$   the \emph{partial flag variety}, namely the scheme parametrising flags of vector spaces
\[
V_l\subset \dots \subset V_1 \subset V,
\]
where $V$ is a fixed  $r$-dimensional $\bfk$-vector space and $\dim V_i =r_i$. It is well-known that $ \Flag(\ns,r)$ is a smooth projective variety of dimension
\begin{equation}\label{dim-partial-flag}
    \dim \Flag(\ns,r)=\sum_{i=1}^l s_i(s_{i+1}-s_i).
\end{equation}
The case $l=1$ yields the Grassmannian $\Flag(s_1,r) = G(r_1,V)$. The \emph{hyperquot scheme}, introduced in the next subsection, can be seen as a vaste generalisation of the partial flag variety.

\subsection{The hyperquot scheme}
Let $Y$ be a projective variety, $E$ a coherent sheaf on $Y$. Fix an integer $l>0$ and a vector
\[
\nv = (v_1,\ldots,v_l) \in \HH^\ast (Y,\BQ)^{\oplus l}
\]
of Chern characters $v_i \in \HH^\ast(Y,\BQ)$. We define the \emph{ hyperquot functor} associated to $(Y,E,\nv)$ to be the functor $\mathsf{Quot}_Y(E,\nv) \colon \Sch_{\bfk}^{\mathrm{op}} \to \Sets$ sending a $\bfk$-scheme $B$ to the set of isomorphism classes of subsequent quotients
\[
\begin{tikzcd}
E_B\arrow[two heads]{r} & \CT_l\arrow[two heads]{r} & \cdots\arrow[two heads]{r} & \CT_1
\end{tikzcd}
\]
where $E_B$ is the pullback of $E$ along $Y\times B \to Y$ and $\CT_i \in \Coh(Y\times B)$ is a $B$-flat family of coherent sheaves such that $\CT_i|_{Y \times b} \in \Coh(Y)$ has Chern character $v_i$ for all $b \in B$ and $i=1,\ldots,l$. Two `nested quotients' 
\[
\begin{tikzcd}
E_B\arrow[two heads]{r} & \CT_l\arrow[two heads]{r} & \cdots\arrow[two heads]{r} & \CT_1,
& E_B\arrow[two heads]{r} & \CT_l'\arrow[two heads]{r} & \cdots\arrow[two heads]{r} & \CT_1'
\end{tikzcd}
\]
are considered isomorphic when $\ker (E_B \onto \CT_i) = \ker (E_B \onto \CT_i')$ for all $i=1,\ldots,l$.

The representability of the functor $\mathsf{Quot}_Y(E,\nv)$ can be proved adapting the proof of \cite[Thm.~4.5.1]{Ser_deformation} or by an explicit induction on $l$ as in \cite[Sec.~2.A.1]{modulisheaves}. We define 
\[
\Quot_Y(E, \nv)
\]
to be the moduli scheme representing the hyperquot functor. We call it the \emph{hyperquot scheme}. Its closed points are then in bijection with the set of isomorphism classes of subsequent quotients
\begin{equation}\label{eqn:nested-quotients}
\begin{tikzcd}
E\arrow[two heads]{r} & T_l\arrow[two heads]{r} & \cdots\arrow[two heads]{r} & T_1
\end{tikzcd}
\end{equation}
where each $T_i \in \Coh(Y)$ is a quotient of $E$ with $\ch(T_i) = v_i$. Equivalently, it parametrises sequences
\begin{equation}\label{point=sequence}
\begin{tikzcd}
K_l\arrow[hook]{r} & \cdots\arrow[hook]{r} & K_1\arrow[hook]{r} & E\arrow[two heads]{r} & T_l\arrow[two heads]{r} & \cdots\arrow[two heads]{r} & T_1
\end{tikzcd}
\end{equation}
where we have set $K_i = \ker (E \onto T_i)$.
The  hyperquot scheme comes with a closed immersion, inside a product of ordinary Quot schemes
\begin{equation}\label{eqn:embedding_in_product}
\begin{tikzcd}
\Quot_Y(E,\nv) \arrow[hook]{r} & \displaystyle\prod_{i=1}^l \Quot_Y(E,v_i),
\end{tikzcd}
\end{equation}
cut out by the nesting condition $K_l \into \cdots \into K_1$. In particular, it is a projective scheme. 

\begin{example}\label{example:hyperquots}
The following examples recover well-known moduli spaces.
\begin{enumerate}
    \item Fix $r \in \BZ_{>0}$. If $Y=\Spec \bfk=\pt$ and $\nv=(s_1,\ldots,s_l)$ is a nondecreasing tuple of nonnegative integers with $s_l\leqslant r$, we have $\Quot_{\pt}(\bfk^{\oplus r},\nv) = \Flag(\ns,r)$, the partial flag variety of \Cref{sec:partial-flag}.
    \item If $E=L$ is locally free of rank $1$ and $v_i = (0,0,\ldots,n_i)$ with $n_l\geqslant \cdots\geqslant  n_1\geqslant 0$, then $\Quot_Y(L,\nv)$ is the nested Hilbert scheme $Y^{[\nn]}$. If $E$ is an arbitrary locally free sheaf, one obtains the nested Quot scheme of points.
\end{enumerate}
\end{example}

\subsection{Tangent space to hyperquot scheme}
Let $z \in \Quot_Y(E,\nv)$ be a closed point, representing a sequence as in \eqref{point=sequence}. There is an inclusion of $\bfk$-vector spaces
\[
\begin{tikzcd}
T_z \Quot_Y(E,\nv) \arrow[hook]{r} & \displaystyle \bigoplus_{i=1}^l \Hom_Y(K_i,T_i),    
\end{tikzcd}
\]
which is just the tangent map to \eqref{eqn:embedding_in_product} at $z$. An $l$-tuple $(\delta_1,\delta_2,\ldots,\delta_l)$ of tangent vectors $\delta_i \in \Hom_Y(K_i,T_i)$ belongs to $T_z \Quot_Y(E,\nv)$ if and only if the diagram
\[
\begin{tikzcd}[row sep=large,column sep=large]
    K_l\arrow[hook]{r}{\iota_{l-1}}\arrow{d}{\delta_l} & K_{l-1} \arrow[hook]{r}{\iota_{l-2}}\arrow{d}{\delta_{l-1}} & \cdots \arrow[hook]{r}{\iota_2} & K_2\arrow[hook]{r}{\iota_{1}}\arrow{d}{\delta_2} & K_1 \arrow{d}{\delta_1} \\
    T_l \arrow[two heads]{r}{p_{l-1}} & T_{l-1} \arrow[two heads]{r}{p_{l-2}} & \cdots \arrow[two heads]{r}{p_2} & T_2 \arrow[two heads]{r}{p_{1}} & T_1
\end{tikzcd}
\]
commutes (see also \cite[Sec. 2]{MR_nested_Quot}). Such commutativity relations,
\[
\delta_i\circ\iota_i-p_i\circ \delta_{i+1} = 0, \quad i=1,\ldots,l-1,
\]
can be encoded in an exact sequence of $\bfk$-linear maps
\begin{equation}\label{ses:tg-space}
\begin{tikzcd}
0 \arrow{r} 
& T_z \Quot_Y(E,\nv) \arrow{r} 
& \displaystyle\bigoplus_{i=1}^l \Hom_Y(K_i,T_i) \arrow{r}{\Delta_z}
& \displaystyle\bigoplus_{i=1}^{l-1} \Hom_Y(K_{i+1},T_i), 
\end{tikzcd}
\end{equation}
where 
\[
\Delta_z(\delta_1,\ldots,\delta_l) = (\delta_i\circ\iota_i-p_i\circ \delta_{i+1})_{1\leqslant i\leqslant l-1}.
\]

\subsection{Our setup: the curve case}\label{sec:setup}
From now on, we work with the hyperquot scheme of a fixed smooth, projective, connected curve $C$ of genus $g$, i.e.~we set $Y=C$. The following data are fixed throughout:
\begin{itemize}
\item [$\circ$] two positive integers $r,l$,
\item [$\circ$] a locally free sheaf $E$ of rank $r$,
\item [$\circ$] a nondecreasing $l$-tuple $\ns=(s_1\leqslant \dots \leqslant s_l)$ of integers $s_i \in \BZ_{\geqslant 0}$ such that $s_l \leqslant r$,
\item [$\circ$] a tuple $\nd=(d_1, \ldots, d_l)$ of integers $d_i \in \BZ$.
\end{itemize}
With this data, we form the vector of Chern characters $\nv = ((s_1,d_1),\ldots,(s_l,d_l)) \in \HH^\ast(C,\BZ)^{\oplus l}$ and we set
\[
\Quot_C(E,\ns,\nd) = \Quot_C(E,\nv).
\]
This hyperquot scheme parametrises sequences \eqref{point=sequence} where $T_i$ is a coherent quotient of $E$ of rank $s_i$ and degree $d_i$. In particular, the kernels $K_i = \ker (E \onto T_i)$ are locally free of rank $r_i = r-s_i$.

\section{Perfect obstruction theory}
\subsection{Perfect obstruction theories}\label{sec: OT}
Let $X$, $B$ be algebraic stacks over $\bfk$, and assume $B$ is smooth. Let $h \colon X \to B$ be a morphism of Deligne--Mumford type (i.e.~such that the diagonal $\Delta_h \colon X \to X \times_BX$ is unramified). In this situation, the relative cotangent complex $L_h$ defined by Illusie \cite{Illusie_Complex_cotangent_1} has cohomology concentrated in nonpositive degrees, i.e.~it belongs to $\derived^{\leqslant 0}(X)$. Let $\BL_h \in \derived^{[-1,0]}(X)$ denote its cut-off at $-1$. 

A \emph{relative perfect obstruction theory} \cite[Sec.~7]{BF_normal_cone} on $h\colon X \to B$ is the datum of a morphism 
\[
\begin{tikzcd}
\BE\arrow{r}{\phi} & \BL_{h}    
\end{tikzcd}
\]
in the derived category $\derived^{[-1,0]}(X)$, where $\BE$ is a perfect complex of perfect amplitude contained in $[-1,0]$, such that
\begin{itemize}
    \item [$\circ$] $h^0(\phi)$ is an isomorphism,
    \item [$\circ$] $h^{-1}(\phi)$ is surjective.
\end{itemize}
If $B=\Spec \bfk$, we say $\phi$ is an \emph{absolute} perfect obstruction theory. 

The dual $\BE^\vee \in \derived^{[0,1]}(X)$ is called the \emph{virtual tangent bundle} of the obstruction theory, and its first cohomology $\Ob_\phi = h^1(\BE^\vee)$ is called the \emph{obstruction sheaf} attached to $(\BE,\phi)$.
The \emph{virtual dimension} of $X$ with respect to $(\BE,\phi)$ is the integer $\vd = \rk \BE + \dim B$. 
By \cite{BF_normal_cone},  a perfect obstruction theory induces a  \emph{virtual fundamental class} 
\[
[X]^{\vir} \in A_{\vd}(X)
\]
and, if $B=\Spec \bfk$, a \emph{virtual structure sheaf} 
\[
\OO_X^{\vir} \in K_0(X).
\]
By a result of Siebert \cite{Siebert} (resp.~Thomas \cite{Tho_K-theo_Fulton}) the virtual fundamental class in the absolute case $B=\Spec \bfk$ (resp.~the virtual structure sheaf) depends only on the $K$-theory class of $\BE$.
Moreover, Behrend--Fantechi prove \cite[Prop.~7.3]{BF_normal_cone} that if $h^{-1}(\BE)=0$ then $h\colon X \to B$ is smooth, and in this case $[X]^{\vir} = [X]$. Furthermore, if $h\colon X \to B$ is smooth, then $\Ob_\phi$ is locally free and $[X]^{\vir} = e(\Ob_\phi)\cap [X]$, where $e(-)$ denotes the Euler class.

\smallbreak
We now recall two basic facts that will be needed in the next subsection. 

First of all, given a smooth algebraic stack $B$ and a morphism $h\colon X \to B$ with a relative perfect obstruction theory $\phi_h\colon \BE_h \to \BL_h$, the shifted cone
\[
\BE = \Cone(\BE_h \to h^\ast \Omega_B[1])[-1]
\]
is an absolute perfect obstruction theory for $X$.
This can be shown via the diagram
\begin{equation}\label{diag:relative-POT-to-absolute}
\begin{tikzcd}[column sep=large]
h^\ast \Omega_B \arrow[dotted]{r}\arrow[equal]{d} & \BE \arrow[dotted]{d}\arrow[dotted]{r} & \BE_h \arrow{d}{\phi_h}\arrow{r}{\varepsilon\circ \phi_h} & h^\ast \Omega_B[1]\arrow[equal]{d} \\  
h^\ast \Omega_B \arrow{r} & \BL_X\arrow{r} & \BL_h \arrow{r}{\varepsilon} & h^\ast \Omega_B[1]
\end{tikzcd}
\end{equation}
where the dotted arrows can be completed to solid ones, to form a morphism of exact triangles, by the axioms of the derived category, and $\BE \to \BL_X$ turns out to be a perfect obstruction theory thanks to a simple computation in  the long exact sequence in cohomology for the two triangles involved. See also \cite[Constr.~3.13]{Manolache-virtual-pb} for a more general setup.

The second fact we need is the following lemma. 

\begin{lemma}[{\cite[Lemma 2.8]{Sin} and \cite{Sca_pot}}]\label{lemma:POT-on-abelian-cone}
Let $p\colon B' \to B$ be a flat, projective, $1$-dimensional family of algebraic stacks. Let $F \in \Coh(B')$ be a $B$-flat coherent sheaf. Consider the abelian cone 
\[
\begin{tikzcd}
\mathbf S = \Spec_{\OO_B} \Sym (\RR^1 p_\ast F) \arrow{r}{\eta} & B.
\end{tikzcd}
\]
Let $\overline p \colon \mathbf S \times_BB' \to \mathbf S$ and $\overline \eta \colon \mathbf S \times_BB' \to B'$ be the projections. Then there is a relative perfect obstruction theory
\[
\begin{tikzcd}
\RR \overline p_\ast \overline F[1] \arrow{r} & \BL_\eta,
\end{tikzcd}
\]
where $\overline F = \overline \eta^\ast F$.
\end{lemma}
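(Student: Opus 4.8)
The plan is to realise the abelian cone $\mathbf S$ as the zero scheme of a section of a vector bundle over a smooth $B$-space, and then invoke the standard perfect obstruction theory of a zero scheme. Since $p$ is projective, $F$ is $B$-flat, and the fibres of $p$ have dimension $\leqslant 1$, cohomology and base change shows that $\RR p_\ast F \in \Perf^{[0,1]}(B)$ and that a locally free representative of it computes the pushforward universally, i.e.~compatibly with arbitrary base change. Working Zariski-locally on $B$ (or \'etale-locally if $B$ is a stack with the resolution property, gluing at the end), pick a two-term complex of locally free sheaves $[V^0 \xrightarrow{\sigma} V^1]$, placed in degrees $0$ and $1$, representing $\RR p_\ast F$. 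Then $\RR^1 p_\ast F = \coker\sigma$, so $\mathbf S = \Spec_{\OO_B}\Sym(\coker\sigma)$ is a closed subscheme of the $B$-smooth space $\mathbf M = \Spec_{\OO_B}\Sym(V^1)$. Writing $\pi\colon \mathbf M \to B$ for the structure map and $\tau\colon \pi^\ast V^1 \to \OO_{\mathbf M}$ for the tautological linear form (the inclusion $V^1 = \Sym^1 V^1 \hookrightarrow \Sym V^1 = \pi_\ast\OO_{\mathbf M}$), the composite $\mathsf s = \tau\circ\pi^\ast\sigma \in \Gamma(\mathbf M, \pi^\ast (V^0)^\vee)$ is a section whose zero scheme is precisely $\mathbf S \hookrightarrow \mathbf M$.

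Next I would apply the perfect obstruction theory of a zero scheme (see \cite{BF_normal_cone}): for a section $\mathsf s \in \Gamma(\mathbf M, \CG)$ of a vector bundle over a $B$-smooth space $\mathbf M$, with zero scheme $Z = Z(\mathsf s)$, the complex $[\CG^\vee|_Z \xrightarrow{\dd\mathsf s} \Omega_{\mathbf M/B}|_Z]$ in degrees $-1,0$ admits a natural morphism to $\BL_{Z/B}$ which is a relative perfect obstruction theory; the two defining conditions follow in the usual way from the conormal sequence of $Z \hookrightarrow \mathbf M$ and the surjection of $\CG^\vee|_Z$ onto the conormal sheaf $I_Z/I_Z^2$. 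In the case at hand $\CG = \pi^\ast(V^0)^\vee$, hence $\CG^\vee = \pi^\ast V^0$; the Kähler differentials of a symmetric algebra give $\Omega_{\mathbf M/B} = \pi^\ast V^1$; and, since $\mathsf s$ is fibrewise linear with coefficient matrix $\sigma$, its derivative is $\pi^\ast\sigma$. Therefore the obstruction complex is
\[
[\pi^\ast V^0 \xrightarrow{\pi^\ast\sigma} \pi^\ast V^1]|_{\mathbf S} \;=\; \eta^\ast\bigl([V^0 \xrightarrow{\sigma} V^1]\bigr) \;=\; \eta^\ast\bigl(\RR p_\ast F\bigr)[1],
\]
the shift $[1]$ arising because $[V^0 \to V^1]$ sits in degrees $0,1$ while the obstruction complex sits in degrees $-1,0$; in particular its $h^0$ is the tautological identification $\eta^\ast\RR^1 p_\ast F = \Omega_{\mathbf S/B} = h^0(\BL_\eta)$.

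To finish, cohomology and base change applied to $\eta$ (for which $\overline\eta^\ast F$ agrees with its derived pullback, $F$ being $B$-flat) identifies $\eta^\ast [V^0 \xrightarrow{\sigma} V^1]$ with $\RR\overline p_\ast(\overline\eta^\ast F) = \RR\overline p_\ast \overline F$; combined with the previous paragraph, the obstruction complex is $\RR\overline p_\ast \overline F[1]$ and we obtain the morphism $\RR\overline p_\ast \overline F[1] \to \BL_\eta$ of the statement. It remains to globalise this and to check that it does not depend on the auxiliary resolution: since the target object $\RR\overline p_\ast\overline F[1]$ and the map to $\BL_\eta$ are intrinsic, two presentations of $\RR p_\ast F$, being linked by a chain of quasi-isomorphisms, yield canonically isomorphic obstruction theories over $\BL_\eta$, so the local constructions glue. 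I expect this last compatibility-and-gluing point — rather than the routine identifications of $\Omega_{\mathbf M/B}$ and $\dd\mathsf s$ or the verification of the two obstruction-theory axioms for a zero scheme — to be the only genuinely delicate step, and it becomes vacuous the moment $\RR p_\ast F$ admits a global two-term locally free resolution on $B$.
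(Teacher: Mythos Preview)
The paper does not prove this lemma; it is quoted from \cite[Lemma 2.8]{Sin} and \cite{Sca_pot} and used as a black box in the proof of \Cref{thm: pot 2 nest}. Your argument is the standard one and is essentially what appears in those references: resolve $\RR p_\ast F$ locally by a two-term complex of vector bundles, realise $\mathbf S$ as the zero locus of the induced linear section over the total space of the degree-one piece, and read off the obstruction theory from the conormal description of a zero scheme. Your identifications of $\Omega_{\mathbf M/B}$ with $\pi^\ast V^1$, of $\dd\mathsf s$ with $\pi^\ast\sigma$, and of $\eta^\ast \RR p_\ast F$ with $\RR\overline p_\ast \overline F$ via base change are all correct. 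You are also right that the only genuinely delicate point is the independence of the resolution and the gluing of the \emph{morphism} (not just the source complex) to $\BL_\eta$; this is what the cited references address, and as you note it becomes vacuous when a global two-term locally free resolution of $\RR p_\ast F$ exists.
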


\subsection{Perfect obstruction theory on hyperquot}
Set $\QQ = \Quot_C(E,\ns, \nd)$, where we have fixed $\ns=(s_1\leqslant \dots \leqslant s_l)$ and $\nd = (d_1,\dots, d_l)$ as in \Cref{sec:setup}. Let $\pi\colon \QQ \times C \to \QQ$ denote the projection. We prove in this section that $\QQ$ admits a perfect obstruction theory.

Over $\QQ\times C$, the universal kernels realise a filtration
\begin{equation}\label{universal-filtration}
\begin{tikzcd}
\CK_l \arrow[hook]{r}{\iota_{l-1}} & \CK_{l-1}\arrow[hook]{r}{\iota_{l-2}} & \cdots \arrow[hook]{r}{\iota_2} & \CK_2 \arrow[hook]{r}{\iota_1} & \CK_1 \arrow[hook]{r}{\iota_0} & \CK_0 = E_{\QQ}.
\end{tikzcd}
\end{equation}

\begin{theorem}\label{thm: pot 2 nest}
The hyperquot scheme $\QQ = \Quot_C(E,\ns,\nd)$ admits a perfect obstruction theory
\[
\begin{tikzcd}
\BE_{\QQ} = \Cone\Biggl(\displaystyle\bigoplus_{i=1}^l \RRlHom_\pi (\CK_i,\CK_i) \arrow{r}{} & \displaystyle\bigoplus_{i=0}^{l-1} \RRlHom_\pi (\CK_{i+1},\CK_i)\Biggr)^\vee \arrow{r} & \BL_{\QQ}
\end{tikzcd}
\]
\end{theorem}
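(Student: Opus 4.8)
The plan is to reduce the construction to the two preliminary facts recalled in \Cref{sec: OT}, namely the passage from relative to absolute perfect obstruction theories via the shifted cone \eqref{diag:relative-POT-to-absolute}, and \Cref{lemma:POT-on-abelian-cone} producing a perfect obstruction theory on an abelian cone. Concretely, I would realise $\QQ$ as an iterated abelian cone over a smooth base. One natural approach: the ordinary Quot scheme $\Quot_C(E,v_l)$ (parametrising only $E\onto T_l$) carries its classical Behrend--Fantechi perfect obstruction theory $\RRlHom_\pi(\CK_l,\CT_l)^\vee\to\BL$, and the forgetful maps
\[
\begin{tikzcd}
\Quot_C(E,\ns,\nd)\arrow{r} & \Quot_C(E,(s_2\leqslant\cdots\leqslant s_l),(d_2,\ldots,d_l)) \arrow{r} & \cdots\arrow{r} & \Quot_C(E,v_l)
\end{tikzcd}
\]
exhibit $\QQ$ as a tower in which each step adds one more quotient $T_{i}\onto T_{i-1}$ nested inside the previous data. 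The fibre of such a step over a fixed sequence $K_l\into\cdots\into K_i\into E\onto T_l\onto\cdots\onto T_i$ is the space of quotients $T_i\onto T_{i-1}$ with fixed Chern character, equivalently $\Quot_C(T_i,v_{i-1})$; relativising over the (smooth-over-nothing, but not smooth) previous moduli space and over $C$, and using that $T_i$ is $\QQ$-flat, this Quot scheme of a family is cut inside a relative abelian cone, so \Cref{lemma:POT-on-abelian-cone} applies and furnishes a relative perfect obstruction theory for each step of the tower. Composing the relative obstruction theories up the tower (using the standard composition of relative perfect obstruction theories, i.e.\ the octahedral/cone axioms applied to the triangles $\BL_{h_j}\to\BL_{h_{j+1}\circ h_j}\to h_j^\ast\BL_{h_{j+1}}$) yields a relative perfect obstruction theory for $\QQ$ over $\Spec\bfk$, i.e.\ an absolute one. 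The remaining work is bookkeeping: identify the resulting complex with the stated cone.

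For the identification, I would work directly with the universal filtration \eqref{universal-filtration}. Write $\CT_i = \CK_0/\CK_i = E_\QQ/\CK_i$; the surjections $p_i\colon \CT_{i+1}\onto \CT_i$ have kernel $\CK_i/\CK_{i+1}$. The tangent--obstruction complex should be the total complex of a two-term complex of $\RR\pi_\ast$-pushforwards governed by the commuting-square condition that already appeared in \eqref{ses:tg-space}: on tangent spaces one has $0\to T_z\QQ\to\bigoplus_i\Hom(K_i,T_i)\xrightarrow{\Delta_z}\bigoplus_{i\leqslant l-1}\Hom(K_{i+1},T_i)$, and a parallel statement should hold one level deeper with $\Ext^1$'s supplying the obstructions. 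Dualising $\Hom(K_i,T_i)$ and $\Hom(K_{i+1},T_i)$ back to $\Hom$'s between kernels via the long exact sequences $0\to\CK_i\to\CK_0\to\CT_i\to0$ and $\CK_{i+1}\into\CK_i$, one rewrites $\RRlHom_\pi(\CK_i,\CT_i)$ and $\RRlHom_\pi(\CK_{i+1},\CT_i)$ in terms of $\RRlHom_\pi(\CK_i,\CK_i)$, $\RRlHom_\pi(\CK_{i+1},\CK_i)$, $\RRlHom_\pi(\CK_i,\CK_0)$, and $\RRlHom_\pi(\CK_{i+1},\CK_i)$; the pieces involving $\CK_0=E_\QQ$ in the target should cancel in pairs between consecutive indices (a telescoping, exactly as in the nested-Quot case \cite{MR_nested_Quot}), leaving precisely
\[
\Cone\Biggl(\bigoplus_{i=1}^l \RRlHom_\pi(\CK_i,\CK_i)\longrightarrow \bigoplus_{i=0}^{l-1}\RRlHom_\pi(\CK_{i+1},\CK_i)\Biggr)^\vee .
\]
The map in this cone is $(\mathrm{res},-\mathrm{inc})$: restriction $\CK_i\to\CK_i$ composed with the inclusion $\CK_{i+1}\into\CK_i$ in one factor, minus precomposition by $\CK_{i+1}\into\CK_i$ in the adjacent factor, which is the sheaf-level incarnation of $\Delta_z$.

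\textbf{Main obstacle.} The genuine difficulty is not the formal manipulation of cones but checking that the complex one obtains from the tower construction \emph{is} the one written in the statement, with the correct amplitude $[-1,0]$ and the correct maps $\phi$, independently of the order in which the quotients were peeled off. Two points require care: first, verifying that the cone has perfect amplitude in $[-1,0]$ — this uses that $C$ is a smooth curve (so $\RRlHom_\pi$ of coherent sheaves lands in amplitude $[0,1]$) together with the fact that the cone map is surjective on $h^0$ and the kernel sheaves $\CK_i$ are $\QQ$-flat families of locally free sheaves on $C$; second, a compatibility/symmetry check that the obstruction theory built by composing the steps of the tower agrees with the manifestly symmetric complex above, which I would handle by comparing both with the truncated cotangent complex $\BL_\QQ$ via the embedding \eqref{eqn:embedding_in_product} into the product of ordinary Quot schemes, reducing the matching of $\phi$ to the already-known obstruction theories of the individual $\Quot_C(E,v_i)$ and the functoriality of $\BL$ under the nesting immersion. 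Establishing that $h^{-1}(\phi)$ is surjective and $h^0(\phi)$ an isomorphism then follows, on $\bfk$-points, from the exactness of \eqref{ses:tg-space} and its degree-one analogue, extended to families by cohomology and base change.
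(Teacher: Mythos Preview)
Your approach is genuinely different from the paper's, and while it could in principle be made to work, it is more complicated and contains a gap that the paper's route sidesteps entirely.

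\textbf{What the paper does.} The paper does \emph{not} climb a tower through intermediate hyperquot schemes. Instead it maps $\QQ$ in one step to the smooth algebraic stack $B=\prod_{i=1}^l \Bun_C(r-s_i,\deg E-d_i)$ by sending a point to the tuple of kernels $([K_1],\ldots,[K_l])$. It then observes that $\QQ$ is open in a \emph{single} abelian cone $\mathbf S=\Spec_{\OO_B}\Sym \RR^1p_\ast F\to B$, where $F$ encodes simultaneously the data of all inclusions $K_{i+1}\hookrightarrow K_i$ (including $K_1\hookrightarrow E$). \Cref{lemma:POT-on-abelian-cone} is applied once to give a relative perfect obstruction theory over the smooth $B$, and the passage \eqref{diag:relative-POT-to-absolute} from relative to absolute is then immediate. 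The identification of $\BE_h$ with $\bigoplus_{i=0}^{l-1}\RRlHom_\pi(\CK_{i+1},\CK_i)^\vee$ is a single line of Grothendieck--Verdier duality; no telescoping is needed, because the complex already comes out in the stated form.

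\textbf{Where your route runs into trouble.} Your tower passes through the intermediate hyperquot schemes $\Quot_C(E,(s_j,\ldots,s_l),(d_j,\ldots,d_l))$, none of which is smooth. Composing relative perfect obstruction theories along such a tower is \emph{not} just an application of the octahedral axiom to cotangent triangles: when the intermediate base is singular you need a compatible-triple argument (as in \cite{KKP} or Manolache's virtual pullbacks) at every stage, and you have not set this up. The construction \eqref{diag:relative-POT-to-absolute} you invoke only converts a relative POT over a \emph{smooth} base into an absolute one; it does not compose two relative POTs over a singular intermediate. One fix would be to extend your tower one further step, from $\Quot_C(E,v_l)$ down to $\Bun_C(r-s_l,\deg E-d_l)$, so that the ultimate base is smooth---but once you do that you are essentially re-deriving the paper's map $h\colon\QQ\to B$ anyway, only through an unnecessarily inductive factorisation. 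Your identification of the complex via telescoping is correct at the level of $K$-theory classes, but the paper's approach gives it directly as a complex, avoiding what you flag as the ``main obstacle'' altogether.
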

\begin{proof}
We prove the case $l=2$. The case $l>2$ is identical and the case $l=1$ is well-known, but also a special case of the case $l=2$ (cf.~\Cref{rmk:no-nesting-POT}).    

So set $l=2$. We need to fix some notation. For $i=1,2$, set $\QQ_i = \Quot_C(E,s_i,d_i)$. Set $B = B_1 \times B_2$ where $B_i = \Bun_C(r-s_i,\deg E-d_i)$ is the (smooth) algebraic stack of vector bundles on $C$, with the indicated Chern character. For $i=1,2$, we have universal bundles $\CU_i \in \Coh(B_i \times C)$ and we denote by 
\[
\begin{tikzcd}
    B_i \times C \arrow{r}{q_i} & B_i, &
    B \times C \arrow{r}{\rho_i} & B_i, & 
    B \times C \arrow{r}{p} & B
\end{tikzcd}
\]
the projections. Consider the map
\[
\begin{tikzcd}
\QQ \arrow{r}{h} & B
\end{tikzcd}
\]
sending $[K_2 \into K_1 \into E \onto T_2 \onto T_1] \mapsto ([K_1],[K_2])$. We have
\[
\Omega_B = \RRlHom_{q_1}(\CU_1,\CU_1)^\vee[-1]\boxplus \RRlHom_{q_2}(\CU_2,\CU_2)^\vee[-1]
\]
and therefore
\[
h^\ast \Omega_B[1] = \RRlHom_{\pi}(\CK_1,\CK_1)^\vee\oplus \RRlHom_{\pi}(\CK_2,\CK_2)^\vee.
\]
Define the sheaf
\begin{align*}
F
&=\rho_1^\ast (\CU_1 \otimes E_{B_1}^\ast \otimes \omega_{q_1}) \oplus \rho_2^\ast \CU_2 \otimes \rho_1^\ast \CU_1^\ast \otimes \rho_2^\ast \omega_{q_2} \\
&=\rho_1^\ast \CU_1 \otimes E_{B}^\ast \otimes \omega_p \oplus \rho_2^\ast \CU_2 \otimes \rho_1^\ast \CU_1^\ast \otimes\omega_p
\end{align*}
in $\Coh(B \times C)$. Then $\QQ$ is open\footnote{This is because the locus in $\Hom(K_1,E)\oplus \Hom(K_2,K_1) = \HH^0(C,K_1^\ast \otimes E)\oplus \HH^0(C,K_2^\ast\otimes K_1) = \HH^1(C,K_1\otimes E^\ast \otimes \omega_C)^\ast \oplus \HH^1(K_2\otimes K_1^\ast\otimes  \omega_C)^\ast$ of pairs of injective maps is open.} in the total space of the abelian cone
\[
\begin{tikzcd}
\mathbf S = \Spec_{\OO_B} \Sym \RR^1 p_\ast F \arrow{r}{\eta} & B,
\end{tikzcd}
\]
and the map $h\colon \QQ \to B$ agrees with $\eta|_{\QQ}\colon \QQ \into \mathbf S \to B$.

We can thus form the diagram 
\begin{equation}\label{big-diagram}
\begin{tikzcd}[row sep=large,column sep=large]
\QQ \times C \MySymb{dr} \arrow[hook]{r}\arrow[swap]{d}{\pi}\arrow[bend left=25]{rr}[description]{h_C} & \mathbf S \times C\MySymb{dr} \arrow{r}{\overline \eta}\arrow{d}{\overline p} & B \times C \arrow{d}{p} \\
\QQ \arrow[bend right=25]{rr}[description]{h}\arrow[hook]{r}{\mathrm{open}} & \mathbf S \arrow{r}{\eta} & B
\end{tikzcd}
\end{equation}
and apply \Cref{lemma:POT-on-abelian-cone} to get a perfect obstruction $\phi_h\colon \BE_h \to \BL_h$ theory relative to $h \colon \QQ \to B$. Using the transitivity triangle for cotangent complexes relative to $h\colon \QQ \to B$, we find a morphism of exact triangles 
\[
\begin{tikzcd}
\BE_{\QQ} \arrow{d}\arrow{r} & \BE_h \arrow{d}{\phi_h}\arrow{r}{\alpha} & \RRlHom_{\pi}(\CK_1,\CK_1)^\vee\oplus \RRlHom_{\pi}(\CK_2,\CK_2)^\vee \arrow[equal]{d} \\  
\BL_{\QQ}\arrow{r} & \BL_h \arrow{r} & h^\ast \Omega_B[1]
\end{tikzcd}
\]
where we have set 
\[
\BE_{\QQ} = \Cone(\alpha)[-1] = \Cone(\alpha^\vee)^\vee.
\]
Note that
\begin{align*}
\BE_{h} 
&= \RR\pi_\ast(h_C^\ast F)[1] \\
&= \RR\pi_\ast(\CK_1 \otimes E_{\QQ}^\ast \otimes \omega_\pi)[1] \oplus \RR\pi_\ast(\CK_2 \otimes \CK_1^\ast \otimes \omega_\pi)[1] \\
&= \RRlHom_\pi(\CK_1,E_{\QQ})^\vee \oplus \RRlHom_\pi(\CK_2,\CK_1)^\vee.
\end{align*}
We used Grothendieck--Verdier duality for the last identity.
This finishes the proof.
\end{proof}

\begin{remark}\label{rmk:no-nesting-POT}
The case $l=2$ also covers the case $l=1$, which corresponds to setting $s_1=d_1=0$. In this case the hyperquot scheme is the classical Quot scheme $\Quot_C(E,s_2,d_2)$ and its obstruction theory is
\[
\begin{tikzcd}
\RRlHom_\pi(\CK_2,\CT_2)^\vee \arrow{r} & \BL_{\Quot_C(E,s_2,d_2)}.
\end{tikzcd}
\]
This has been already constructed by Marian--Oprea \cite{MO_duke} for $E=\OO_C^{\oplus r}$ (but the proof generalises to arbitrary locally free sheaves), Oprea \cite{Oprea_zero_locus}, and Gillam \cite[Lemma~4.7]{Gillam}.
\end{remark}

\begin{remark}\label{rmk:same-K}
The universal quotients $\CT_i \in \Coh(\QQ\times C)$ realise subsequent quotients
\[
\begin{tikzcd}[column sep=large]
E_{\QQ} \arrow[two heads]{r}{p_{l}} & \CT_l \arrow[two heads]{r}{p_{l-1}} & \CT_{l-1}\arrow[two heads]{r}{p_{l-2}} & \cdots \arrow[two heads]{r}{p_2} & \CT_2 \arrow[two heads]{r}{p_1} & \CT_1.
\end{tikzcd}
\]
By functoriality, for each $i=1,\ldots,l-1$ we obtain maps
\[
\begin{tikzcd}[row sep=tiny]
\RRlHom_\pi (\CK_i,\CT_i) \arrow{r}{u_i} & \RRlHom_\pi (\CK_{i+1},\CT_i) & \delta \mapsto \delta\circ \iota_{i}\\
\RRlHom_\pi (\CK_{i+1},\CT_{i+1}) \arrow{r}{v_i} & \RRlHom_\pi (\CK_{i+1},\CT_i) & \delta \mapsto p_{i}\circ \delta
\end{tikzcd}
\]
in $\Perf(\QQ)$, where $\iota_i \colon \CK_i \into \CK_{i-1}$ are the universal inclusions of \eqref{universal-filtration}. We obtain an exact triangle
\begin{equation}\label{eqn:R_piDelta}
\begin{tikzcd}[column sep=large]
\Gamma[-1]\arrow{r} & \displaystyle\bigoplus_{i=1}^l \RRlHom_\pi (\CK_i,\CT_i) \arrow{r}{\RR_\pi\Delta} & \displaystyle\bigoplus_{i=1}^{l-1} \RRlHom_\pi (\CK_{i+1},\CT_i).
\end{tikzcd}
\end{equation}
where $\Gamma = \Cone(\RR_\pi\Delta)$. 
The map $\RR_\pi\Delta$ can be seen as a universal analogue of the map $\Delta_z$ in \eqref{ses:tg-space}. Indeed, the sequence \eqref{ses:tg-space} is obtained from $\RR_\pi\Delta$ by restricting \eqref{eqn:R_piDelta} to $z\in\QQ$ (see \eqref{delta-restricted} below) and taking cohomology.
The complex
\[
\Gamma[-1] \in \derived(\QQ)
\]
is perfect in $[0,1]$, i.e.~it belongs to $\Perf^{[0,1]}(\QQ)$, and has the following two properties:
\begin{itemize}
\item [\mylabel{gamm1-i}{(i)}] It has the same $K$-theory class as $\BE_{\QQ}^\vee$, and
\item [\mylabel{gamm1-ii}{(ii)}] $h^0(\Gamma[-1]) = T_{\QQ} = h^0(\BE_{\QQ}^\vee)$, by the exact sequence \eqref{ses:tg-space}, and because $\BE_{\QQ}$ is a perfect obstruction theory.
\end{itemize}
It follows that $h^1(\Gamma[-1]) = h^1(\BE_{\QQ}^\vee)$ as well.
\end{remark}

Let $z \in \Quot_C(E,\ns,\nd)$ be a closed point representing \eqref{point=sequence}, with inclusion morphism $i_z$. Applying $\LL i_z^\ast$ to \eqref{eqn:R_piDelta} we obtain an exact triangle
\begin{equation}\label{delta-restricted}
\begin{tikzcd}[column sep=large]
\Gamma_z[-1]  \arrow{r} & \displaystyle\bigoplus_{i=1}^l \RR\Hom (K_i,T_i) \arrow{r}{\RR_z\Delta} & \displaystyle\bigoplus_{i=1}^{l-1} \RR\Hom (K_{i+1},T_i),
\end{tikzcd}
\end{equation}
where $\Gamma_z = \Cone(\RR_z\Delta)$.
On the other hand, recall the definition of the deformation functor $\Def_z \subset \mathsf{Quot}_C(E,\ns,\nd)|_{\Art_\bfk}$, sending a local artinian $\bfk$-algebra $A$ (with residue field $\bfk$) to the set of $A$-valued points in $\mathsf{Quot}_C(E,\ns,\nd)(\Spec A)$ restricting to $z$ over the closed point of $\Spec A$.

We then obtain the following corollary of \Cref{thm: pot 2 nest}, which we will use to detect smoothness of hyperquot schemes in \Cref{sec:smoothness}.

\begin{corollary}\label{cor:tangent-obs}
Let $z \in \Quot_C(E,\ns,\nd)$ be a closed point. Then the pair of $\bfk$-vector spaces
\[
T^1_z = h^0(\Gamma_z[-1]),\qquad T^2_z = h^1(\Gamma_z[-1])
\]
defines a tangent-obstruction theory on the deformation functor $\Def_z$.
\end{corollary}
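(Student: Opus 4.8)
The plan is to deduce the statement from the perfect obstruction theory of \Cref{thm: pot 2 nest} together with the identifications recorded in \Cref{rmk:same-K}.

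Write $\phi\colon \BE_{\QQ}\to\BL_{\QQ}$ for the perfect obstruction theory of \Cref{thm: pot 2 nest} and $i_z\colon \Spec\bfk\into\QQ$ for the inclusion of the closed point $z$. By the Behrend--Fantechi correspondence between perfect obstruction theories and tangent--obstruction theories (which applies here because $h^0(\phi)$ is an isomorphism, $h^{-1}(\phi)$ is surjective, and the truncated cotangent complex $\tau_{\geqslant -1}\BL_{\QQ}$ controls the classical deformation theory of the point $z$ on the scheme $\QQ$), the pair $\bigl(h^0(\LL i_z^\ast\BE_{\QQ}^\vee),\,h^1(\LL i_z^\ast\BE_{\QQ}^\vee)\bigr)$ is a tangent--obstruction theory on $\Def_z$. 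So the whole content is to match this pair with $(T^1_z,T^2_z)=(h^0(\Gamma_z[-1]),h^1(\Gamma_z[-1]))$.

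The comparison of tangent spaces would go through two independent computations. On one side, $h^0$ is the top nonvanishing cohomological degree of $\BE_{\QQ}\in\derived^{[-1,0]}(\QQ)$, hence commutes with the right $t$-exact functor $\LL i_z^\ast$; since $h^0(\phi)$ is an isomorphism one gets $h^0(\LL i_z^\ast\BE_{\QQ})=i_z^\ast h^0(\BE_{\QQ})=i_z^\ast\Omega_{\QQ}=\Fm_z/\Fm_z^2$, and dualising over $\bfk$ yields $h^0(\LL i_z^\ast\BE_{\QQ}^\vee)=T_z\QQ$. On the other side, \eqref{delta-restricted} is $\LL i_z^\ast$ applied to \eqref{eqn:R_piDelta}, so $\Gamma_z[-1]=\LL i_z^\ast(\Gamma[-1])$; since $C$ is a curve and each $K_i$ is locally free, every $\RHom(K_i,T_i)$ lies in $\derived^{[0,1]}$, so the long exact sequence of \eqref{delta-restricted} opens with $0\to h^0(\Gamma_z[-1])\to\bigoplus_i\Hom(K_i,T_i)\xrightarrow{\Delta_z}\bigoplus_i\Hom(K_{i+1},T_i)$, and comparing with \eqref{ses:tg-space} gives $h^0(\Gamma_z[-1])=\ker\Delta_z=T_z\QQ$. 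Hence $T^1_z\cong h^0(\LL i_z^\ast\BE_{\QQ}^\vee)$.

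For the obstruction spaces, note that $h^1$ is the top nonvanishing cohomological degree of both $\Gamma[-1]\in\Perf^{[0,1]}(\QQ)$ and $\BE_{\QQ}^\vee\in\derived^{[0,1]}(\QQ)$, so it too commutes with $\LL i_z^\ast$; combined with the sheaf-level equality $h^1(\Gamma[-1])=h^1(\BE_{\QQ}^\vee)$ from \Cref{rmk:same-K}, this gives $T^2_z=h^1(\Gamma_z[-1])=i_z^\ast h^1(\Gamma[-1])=i_z^\ast h^1(\BE_{\QQ}^\vee)=h^1(\LL i_z^\ast\BE_{\QQ}^\vee)$. (As a sanity check one can argue instead from the $K$-theory identity $[\Gamma[-1]]=[\BE_{\QQ}^\vee]$ of \Cref{rmk:same-K}: restricting it to $z$ equates Euler characteristics, which together with the matching of $h^0$'s above forces the $h^1$'s to have equal dimension.) I expect the only real difficulty to be organisational --- keeping straight how $\LL i_z^\ast$ interacts with smart truncation and with derived duality, so that it is invoked only for the \emph{top} cohomology sheaf --- rather than anything substantial.
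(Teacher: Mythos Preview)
Your proposal is correct and follows exactly the same route as the paper, which simply reads ``Combine \Cref{thm: pot 2 nest} and \Cref{rmk:same-K} with one another.'' You have merely unpacked what that combination entails: the Behrend--Fantechi perfect obstruction theory supplies the tangent--obstruction pair $(h^0(\LL i_z^\ast\BE_{\QQ}^\vee),h^1(\LL i_z^\ast\BE_{\QQ}^\vee))$, and the identifications of \Cref{rmk:same-K} match this with $(h^0(\Gamma_z[-1]),h^1(\Gamma_z[-1]))$ via the top-degree compatibility of $\LL i_z^\ast$ with cohomology.
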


\begin{proof}
Combine \Cref{thm: pot 2 nest} and \Cref{rmk:same-K} with one another.
\end{proof}

\begin{remark}\label{rmk:the-two-cones}
Define
\begin{equation}\label{def:Gamma-cone}
\BF_{\QQ} = \Gamma^\vee[1] = \Cone((\RR_\pi\Delta)^\vee).
\end{equation}
Given the evidence provided in \Cref{rmk:same-K}, we expect an equality of \emph{complexes}
\[
\BE_{\QQ} = \BF_{\QQ}.
\]
We will confirm functoriality properties of the virtual class $[\QQ]^{\vir}$, granting this identity, in \Cref{app:functoriality-VFC}.
\end{remark}

\subsection{Virtual structure}
By the general machinery recalled in Section \ref{sec: OT}, hyperquot schemes on curves admit natural virtual fundamental classes. In this subsection we give a formula for the \emph{virtual dimension} of the perfect obstruction theory, namely the rank of the complex $\BE_{\QQ}$.

Recall that, with the conventions of \Cref{sec:partial-flag}, the dimension of the partial flag variety is
\begin{align*}
    \dim \Flag(\ns,r)=\sum_{i=1}^l s_i(s_{i+1}-s_i).
\end{align*}

The following is a direct consequence of \Cref{thm: pot 2 nest}.

\begin{corollary}\label{cor: virtual cycles}
Let $C$ be a smooth projective curve of genus $g$, $E$ a locally free sheaf of rank $r$,  and fix  tuples $\ns=(s_1\leqslant \dots \leqslant s_l)$ and $\nd=(d_1, \dots, d_l)$.   The hyperquot scheme $\QQ = \Quot_C(E,\ns,\nd)$ admits a virtual fundamental class and a virtual structure sheaf
    \begin{align*}
[\QQ]^{\vir} & \in\, A_{\vd}(\QQ),\\
\OO_{\QQ}^{\vir} &\in\, K_0(\QQ),
\end{align*}
where the virtual dimension is
\[
\vd =   (1-g)\cdot \dim \Flag(\ns,r)+ \sum_{i=1}^l d_i (s_{i+1}-s_{i-1})-\deg E\cdot s_l,
\]
where we set $s_{l+1}=r$ and $s_0=0$.
\end{corollary}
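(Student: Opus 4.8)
The plan is to compute the rank of $\BE_{\QQ}$ directly from its definition as a cone, using the additivity of rank over distinguished triangles. By \Cref{thm: pot 2 nest} we have
\[
\rk \BE_{\QQ} = \sum_{i=0}^{l-1}\rk \RRlHom_\pi(\CK_{i+1},\CK_i) - \sum_{i=1}^l \rk \RRlHom_\pi(\CK_i,\CK_i),
\]
where $\CK_0 = E_{\QQ}$. Since $\pi\colon \QQ\times C\to \QQ$ is a proper smooth morphism of relative dimension $1$, the rank of $\RRlHom_\pi(\CF,\CG)$ for locally free $\CF,\CG$ is computed fiberwise by Riemann--Roch on the curve $C$: for vector bundles $F,G$ on $C$ of ranks $\rk F$, $\rk G$ and degrees $\deg F$, $\deg G$, one has
\[
\chi(C, F^\ast\otimes G) = \rk F\cdot \deg G - \rk G\cdot \deg F + (1-g)\rk F\cdot \rk G,
\]
and $\rk \RRlHom_\pi(\CF,\CG) = \chi(C,F^\ast\otimes G)$ evaluated on a fiber. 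First I would record that over a point $z\in\QQ$ the kernel $K_i$ is locally free of rank $r_i = r-s_i$ and degree $\deg E - d_i$ (this is noted in \Cref{sec:setup}), with the conventions $r_0 = r$, $d_0 = 0$, so that $K_0 = E$ has degree $\deg E$.

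Next I would substitute into the two sums. The diagonal terms contribute $\sum_{i=1}^l(1-g)r_i^2$ (the degree terms cancel since $\chi(C,K_i^\ast\otimes K_i)$ has no degree contribution). The off-diagonal terms contribute
\[
\sum_{i=0}^{l-1}\Bigl(r_{i+1}(\deg E - d_i) - r_i(\deg E - d_{i+1}) + (1-g)r_{i+1}r_i\Bigr).
\]
Expanding, the $(1-g)$-part of $\rk\BE_\QQ$ becomes $(1-g)\sum_{i=0}^{l-1}r_{i+1}r_i - (1-g)\sum_{i=1}^l r_i^2$, which I would rewrite, using $r_i = r - s_i$ and the telescoping/reindexing, so that it matches $(1-g)\dim\Flag(\ns,r) = (1-g)\sum_{i=1}^l s_i(s_{i+1}-s_i)$; this is a short algebraic identity best checked by writing $r_{i+1}r_i - r_i^2 = r_i(r_{i+1}-r_i) = r_i(s_i - s_{i+1})$ and then comparing $\sum_{i=0}^{l-1}r_i(s_i-s_{i+1}) = \sum_{j=1}^l r_{j-1}(s_{j-1}-s_j)$ with $-\sum_{i=1}^l s_i(s_{i+1}-s_i)$ after using $s_0 = 0$, $s_{l+1} = r$, $r_0 = r$. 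The remaining (genus-independent, degree-dependent) part is
\[
\deg E\sum_{i=0}^{l-1}(r_{i+1}-r_i) - \sum_{i=0}^{l-1}(r_{i+1}d_i - r_i d_{i+1}).
\]
The first sum telescopes to $\deg E\cdot(r_l - r_0) = -\deg E\cdot s_l$. For the second, after reindexing and using $d_0 = 0$, $r_0 = r$, I would regroup the coefficient of each $d_i$ (for $1\le i\le l$) to obtain $d_i(r_{i+1} - r_{i-1}) = d_i(s_{i-1}-s_{i+1})$, giving $-\sum_{i=1}^l d_i(s_{i-1}-s_{i+1}) = \sum_{i=1}^l d_i(s_{i+1}-s_{i-1})$. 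Assembling the three pieces yields exactly
\[
\vd = (1-g)\dim\Flag(\ns,r) + \sum_{i=1}^l d_i(s_{i+1}-s_{i-1}) - \deg E\cdot s_l,
\]
and since $B = \Spec\bfk$ is a point (the obstruction theory of \Cref{thm: pot 2 nest} is absolute), $\vd = \rk\BE_\QQ + \dim B = \rk\BE_\QQ$ as recalled in \Cref{sec: OT}. The existence of $[\QQ]^{\vir}$ and $\OO_\QQ^{\vir}$ is then immediate from the Behrend--Fantechi machinery cited there, since $\BE_\QQ$ is a perfect obstruction theory of amplitude $[-1,0]$.

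The main obstacle is purely bookkeeping: keeping the boundary conventions $s_0 = 0$, $s_{l+1} = r$, $r_0 = r$, $r_{l+1} = 0$, $d_0 = 0$ consistent through the two reindexings (one for the telescoping degree sum, one for collecting the coefficient of $d_i$), and making sure the $l=1$ degenerate case — where $\CK_0 = E_\QQ$ and $\CK_1 = K$ with the sum over $i=0$ only — is covered without a separate argument. There is no genuine mathematical difficulty; the only subtlety is that one should either do the fiberwise Riemann--Roch computation after checking the relevant $\RRlHom_\pi$ complexes are perfect with locally constant rank (which holds since the curve and all sheaves involved are flat over $\QQ$), or equivalently argue by flat base change that $\rk$ can be read off at any single closed point.
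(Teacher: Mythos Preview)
Your proposal is correct. The main difference from the paper's proof is the choice of presentation of the virtual tangent bundle: you work directly with the cone in \Cref{thm: pot 2 nest}, i.e.\ with the terms $\RRlHom_\pi(\CK_{i+1},\CK_i)$ and $\RRlHom_\pi(\CK_i,\CK_i)$, whereas the paper first invokes \Cref{rmk:same-K} to pass to the $K$-theoretically equivalent complex built from $\RR\Hom(K_i,T_i)$ and $\RR\Hom(K_{i+1},T_i)$, and applies Riemann--Roch there. The paper's route has the mild advantage that the Riemann--Roch inputs are cleaner (the invariants $s_i,d_i$ of $T_i$ appear directly, rather than through $\deg K_i=\deg E-d_i$), so the algebra collapses in fewer steps; your route has the advantage of not needing \Cref{rmk:same-K} at all, working straight from the defining cone. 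One small remark on your sketch of the $(1-g)$-part: the two sums $\sum_{i=0}^{l-1}r_{i+1}r_i$ and $\sum_{i=1}^l r_i^2$ are over different index ranges, so the grouping ``$r_{i+1}r_i - r_i^2$'' is only valid after reindexing the first sum to $\sum_{i=1}^l r_i r_{i-1}$; once that is done the identity with $\dim\Flag(\ns,r)$ is immediate, as you say.
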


\begin{proof}
  Let
  \[
[K_l\hookrightarrow \dots \hookrightarrow K_1 \hookrightarrow   E \onto T_l \onto \cdots \onto T_1]\in\QQ
\]
be any closed point. Then, thanks to \Cref{rmk:same-K}, the virtual dimension is computed by 
   \[
   \vd = \rk \BE_{\QQ} = \sum_{i=1}^l \rk\RR\Hom (K_{i},T_{i})-\sum_{i=1}^{l-1} \rk\RR\Hom (K_{i+1},T_{i}).
   \]
    By Riemann--Roch, for each $i,j=1, \dots, l$ we have
    \[
    \rk\RR\Hom (K_{j},T_{i})=r_js_i(1-g) + r_j d_i+s_i(d_j-\deg E),
    \]
    which implies
    \begin{align*}
         \vd &= \sum_{i=1}^l\left(r_is_i(1-g) + r_i d_i+s_i(d_i-\deg E)\right)   - \sum_{i=1}^{l-1}\left(r_{i+1}s_i(1-g) + r_{i+1} d_i+s_i(d_{i+1}-\deg E)\right)  \\
         &=  (1-g)\sum_{i=1}^l s_i(s_{i+1}-s_i)-s_l\cdot \deg E +\sum_{i=1}^ls_{i+1}d_{i}-\sum_{i=1}^{l-1}s_id_{i+1}\\
         &= (1-g)\cdot \dim \Flag(\ns,r) -s_l\cdot \deg E+ \sum_{i=1}^l d_i (s_{i+1}-s_{i-1}),
    \end{align*}
as required.
\end{proof}

\begin{remark}
Assume $g>1$, $\deg E=0$ and $d_i=0$ for all $i=1,\ldots,l$. Then $\QQ=\Quot_C(E,\ns,\mathbf{0})$ has negative virtual dimension, namely
\[
\vd = (1-g) \cdot \dim \Flag(\ns,r) < 0.
\]
\end{remark}
\begin{remark}
It was recently proved in \cite{rasul2024irreducibility} that, for $l=2$ and the multidegree $\nd$ large enough, the hyperquot scheme $\QQ=\Quot_C(E,\ns,\mathbf{\nd})$ is irreducible, local complete intersection and that its dimension coincides with our formula for the  virtual dimension, which implies that the virtual fundamental class coincides with the ordinary fundamental class. See also \cite{Popa} for the case of ordinary Quot schemes.
\end{remark}

\section{Smoothness of hyperquot schemes}\label{sec:smoothness}
We exploit the tangent-obstruction theory of \Cref{cor:tangent-obs} to discuss the smoothness of the hyperquot scheme $\Quot_C(E, \ns, \nd)$, where $E$ is a locally free sheaf over a smooth projective curve $C$, as in \Cref{sec:setup}. Notice that if $\rk E=1$, then   $\Quot_C(E, \ns, \nd)$ is isomorphic to a product of symmetric products of $C$,     and therefore it is always smooth and irreducible.

\subsection{The case of 0-dimensional quotients} The smoothness in the case of $0$-dimensional quotients was  obtained in  \cite{MR_nested_Quot, MR_lissite} by a direct computation. We provide here an alternative proof.
\begin{prop}\label{prop: smooth zero dim}
  Let $C$ be a smooth projective curve and $E$ a locally free sheaf on $C$. Then for all $\nd$ the hyperquot scheme $\Quot_C(E, \mathbf{0}, \nd)$ is smooth and unobstructed.
\end{prop}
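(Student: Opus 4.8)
The plan is to use the tangent--obstruction theory from \Cref{cor:tangent-obs}, and show that the obstruction space $T^2_z = h^1(\Gamma_z[-1])$ vanishes at every closed point $z \in \Quot_C(E,\mathbf{0},\nd)$. Recall that a point $z$ is a sequence of $0$-dimensional quotients
\[
\begin{tikzcd}
K_l\arrow[hook]{r} & \cdots\arrow[hook]{r} & K_1\arrow[hook]{r} & E\arrow[two heads]{r} & T_l\arrow[two heads]{r} & \cdots\arrow[two heads]{r} & T_1,
\end{tikzcd}
\]
where each $T_i$ is torsion of length $d_i$. The exact triangle \eqref{delta-restricted} reads
\[
\begin{tikzcd}[column sep=large]
\Gamma_z[-1]  \arrow{r} & \displaystyle\bigoplus_{i=1}^l \RR\Hom (K_i,T_i) \arrow{r}{\RR_z\Delta} & \displaystyle\bigoplus_{i=1}^{l-1} \RR\Hom (K_{i+1},T_i),
\end{tikzcd}
\]
so it suffices to control the two-term complexes $\RR\Hom(K_j,T_i)$ and the connecting maps in the induced long exact cohomology sequence.

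The key observation is that, since each $T_i$ is a torsion sheaf (dimension $0$) and each $K_j$ is locally free, we have $\Ext^{\geqslant 2}_C(K_j,T_i)=0$ trivially (as $C$ is a curve, cohomological dimension $1$), and moreover $\Ext^1_C(K_j,T_i)=\HH^1(C, K_j^\ast\otimes T_i)=0$ because $K_j^\ast\otimes T_i$ is again a torsion sheaf, hence has vanishing $\HH^1$. Therefore each $\RR\Hom(K_j,T_i)$ is concentrated in degree $0$, i.e.\ $\bigoplus_i\RR\Hom(K_i,T_i)$ and $\bigoplus_i\RR\Hom(K_{i+1},T_i)$ are honest vector spaces placed in degree $0$. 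Consequently $\Gamma_z[-1]$ is quasi-isomorphic to the two-term complex of vector spaces
\[
\begin{tikzcd}
\displaystyle\bigoplus_{i=1}^l \Hom(K_i,T_i) \arrow{r}{\Delta_z} & \displaystyle\bigoplus_{i=1}^{l-1} \Hom(K_{i+1},T_i)
\end{tikzcd}
\]
with the left term in degree $0$; hence $T^2_z = h^1(\Gamma_z[-1]) = \coker\Delta_z$ a priori. To conclude I would show this cokernel also vanishes, i.e.\ that $\Delta_z$ is surjective. The cleanest way is to exhibit, for each fixed index $i_0\in\{1,\dots,l-1\}$ and each $\psi\in\Hom(K_{i_0+1},T_{i_0})$, a preimage: choose $\delta_j=0$ for $j\neq i_0+1$ and produce $\delta_{i_0+1}\in\Hom(K_{i_0+1},T_{i_0+1})$ with $p_{i_0}\circ\delta_{i_0+1}=\psi$ and $\delta_{i_0+1}\circ\iota_{i_0+1}=0$ (the latter forced since $\delta_{i_0+2}=0$). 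This reduces to a local problem at the finitely many points of $\Supp T_{i_0}$, where $E$ is free and the quotients are described by ideals; one then uses that $p_{i_0}\colon T_{i_0+1}\onto T_{i_0}$ is surjective and that $K_{i_0+1}/K_{i_0+2}$ is a subsheaf of $T_{i_0+1}$, so one can lift $\psi$ against the surjection $K_{i_0+1}\onto K_{i_0+1}/K_{i_0+2}\hookrightarrow T_{i_0+1}$ suitably — alternatively, invoke directly that the product of Quot schemes $\prod_i\Quot_C(E,d_i)$ is smooth (each factor is, by \Cref{rmk:no-nesting-POT} together with the $\Ext^1$-vanishing above) and that the nesting conditions cut out a smooth subscheme, which is exactly the statement that $\Delta_z$ surjects.

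The main obstacle is the surjectivity of $\Delta_z$: the vanishing of $\Ext^1$ is immediate and formal, but showing $\coker\Delta_z=0$ requires an honest argument with the local structure of the nested torsion quotients. The most economical route is probably the local one: étale-locally at a point $x\in C$ the sheaf $E$ is free of rank $r$, and a flag of $0$-dimensional quotients corresponds to a descending chain of finite-colength submodules $K_l\subset\cdots\subset K_1\subset\OO_{C,x}^{\oplus r}$; lifting a homomorphism $K_{i_0+1}\to T_{i_0}=\OO^{\oplus r}/K_{i_0}$ to a homomorphism $K_{i_0+1}\to T_{i_0+1}=\OO^{\oplus r}/K_{i_0+1}$ compatibly with the nesting amounts to lifting along the surjection $K_{i_0+1}\twoheadrightarrow K_{i_0+1}/K_{i_0+2}$ and using that $T_{i_0+1}\twoheadrightarrow T_{i_0}$ is surjective with the relevant source being projective over the local artinian quotient, which one checks directly. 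Once $\Delta_z$ is surjective at every $z$, \Cref{cor:tangent-obs} gives $T^2_z=0$ for all $z$, so $\Quot_C(E,\mathbf{0},\nd)$ is smooth and unobstructed, and the perfect obstruction theory of \Cref{thm: pot 2 nest} restricts to the trivial one, $[\QQ]^{\vir}=[\QQ]$.
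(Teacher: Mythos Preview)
Your reduction to the surjectivity of $\Delta_z$ is correct and matches the paper: since each $K_j$ is locally free and each $T_i$ is torsion, all the relevant $\Ext^1$ groups vanish, and the long exact sequence collapses to give $h^1(\Gamma_z[-1]) = \coker\Delta_z$. The difficulty is your argument for $\coker\Delta_z = 0$.

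Your strategy of hitting each coordinate $\psi \in \Hom(K_{i_0+1},T_{i_0})$ by switching on only $\delta_{i_0+1}$ imposes the extra constraint $\delta_{i_0+1}\circ\iota_{i_0+1} = 0$, i.e.\ that $\delta_{i_0+1}$ factors through $K_{i_0+1}/K_{i_0+2}$. This cannot always be arranged: if for instance $d_{i_0+1} = d_{i_0+2}$ then $K_{i_0+1}=K_{i_0+2}$, the constraint forces $\delta_{i_0+1}=0$, and hence $\psi=0$. (Also, $K_{i_0+1}/K_{i_0+2}$ sits inside $T_{i_0+2}$, not $T_{i_0+1}$, so the lifting picture you sketch is mis-indexed.) The ``alternatively'' clause is circular: asserting that the nesting conditions cut out a smooth subscheme of the smooth product is exactly the statement you are trying to prove.

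The paper's fix is short. Set $Q_{i+1}=\ker(T_{i+1}\onto T_i)=K_i/K_{i+1}$; this is again torsion, so $\Ext^1_C(K_{i+1},Q_{i+1})=0$ by the same vanishing you already used. Applying $\Hom_C(K_{i+1},-)$ to $0\to Q_{i+1}\to T_{i+1}\to T_i\to 0$ shows that $p_{i,\ast}\colon\Hom(K_{i+1},T_{i+1})\to\Hom(K_{i+1},T_i)$ is surjective for every $i$. Now lift inductively rather than one coordinate at a time: given $(\psi_1,\ldots,\psi_{l-1})$, set $\delta_1=0$, choose $\delta_2$ with $p_1\circ\delta_2=-\psi_1$, then $\delta_3$ with $p_2\circ\delta_3=\delta_2\circ\iota_2-\psi_2$, and so on up to $\delta_l$. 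At each stage the required lift exists by the surjectivity just proved, and no factoring-through constraint is ever needed.
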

\begin{proof}
     Let
  \[
z = [K_l\hookrightarrow \dots \hookrightarrow K_1 \hookrightarrow   E \onto T_l \onto \cdots \onto T_1] \in \Quot_C(E, \mathbf{0}, \nd)
\]
be any point and consider the exact sequences
\[
\begin{tikzcd}[row sep=tiny]
0 \arrow{r} & K_i \arrow{r} & K_{i-1} \arrow{r} & Q_i \arrow{r} & 0 \\
0 \arrow{r} & Q_i \arrow{r} & T_i \arrow{r} & T_{i-1} \arrow{r} & 0
\end{tikzcd}
\]
for every $i=1,\dots, l$. Since $Q_i, T_i$ are $0$-dimensional sheaves and $K_i$ is locally free, we obtain the vanishings
\begin{align*}\label{eqn: vanishings}
\Ext^j_C(K_i, T_i)=\Ext^j_C(K_{i+1}, T_i)=\Ext^j_C(K_i, Q_i)=0, \quad j>0,
\end{align*}
which implies that $\Delta_z$, the map introduced in \eqref{ses:tg-space}, is surjective. Therefore, taking the long exact sequence in cohomology attached to \eqref{eqn:R_piDelta} we obtain
\[
\begin{tikzcd}h^1(\Gamma_z[-1])\arrow[hook]{r} &  \displaystyle\bigoplus_{i=1}^l \Ext^1_C(K_i, T_i)=0, 
\end{tikzcd}
\]
  by which we conclude that the obstruction space $h^1(\Gamma_z[-1])$ vanishes as well.
\end{proof}

\subsection{Smoothness in the unnested case} 
We start discussing criteria for smoothness of the ordinary Quot scheme, i.e.~without nesting conditions.
\begin{prop}\label{prop: smooth l=1}
Fix $r\geqslant 2$ and $0<s\leqslant r$. Let $C$ be a smooth projective curve of genus $g$, and let $E=\bigoplus_{\alpha=1}^r L_\alpha$ be a split locally free sheaf on $C$. Assume that
\begin{align}\label{eqn: condition smooth l=1}
\HH^0(C, L_\alpha\otimes L_{\beta}^{-1}\otimes \omega_C)=0, \qquad 1\leqslant \alpha\neq \beta\leqslant r.
\end{align}
Then, for all $d\in \BZ$, the Quot scheme  $  \Quot_C(E, s, d) $ is smooth and unobstructed, and $g\leqslant 1$.
\end{prop}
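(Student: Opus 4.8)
The plan is to prove the two conclusions separately. The genus bound $g\leqslant1$ is a quick consequence of \eqref{eqn: condition smooth l=1} together with Riemann--Roch; smoothness and unobstructedness will follow from a pointwise vanishing of the obstruction space, which I would reduce via Serre duality to a statement that the splitting hypothesis makes tractable. The nub of the whole argument is this last step, where one has to spot the right way to feed \eqref{eqn: condition smooth l=1} into the computation.

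\emph{Step 1 ($g\leqslant1$).} Since $r\geqslant2$, fix distinct indices $\alpha,\beta$. By \eqref{eqn: condition smooth l=1}, the line bundles $N=L_\alpha\otimes L_\beta^{-1}\otimes\omega_C$ and $N'=L_\beta\otimes L_\alpha^{-1}\otimes\omega_C$ both have vanishing $\HH^0$. For any line bundle $M$ on $C$, Riemann--Roch shows that $\HH^0(C,M)=0$ forces $\deg M\leqslant g-1$; applied to $N$ and $N'$, whose degrees sum to $2\deg\omega_C=4g-4$, this gives $4g-4\leqslant2(g-1)$, i.e.\ $g\leqslant1$.

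\emph{Step 2 (reduction).} By \Cref{cor:tangent-obs} in the unnested case (see \Cref{rmk:no-nesting-POT}), at a closed point $z=[K\hookrightarrow E\onto T]$ of $\QQ=\Quot_C(E,s,d)$ the deformation functor $\Def_z$ carries a tangent--obstruction theory with obstruction space $T^2_z=\Ext^1_C(K,T)$. So it suffices to show $\Ext^1_C(K,T)=0$ for every such $z$: by cohomology and base change this makes the obstruction sheaf $\Ob_\phi$ vanish, so $\QQ$ is unobstructed and, by \cite[Prop.~7.3]{BF_normal_cone}, smooth. By Serre duality on the curve, $\Ext^1_C(K,T)=0$ is equivalent to $\Hom_C(T,K\otimes\omega_C)=0$, which is what I would establish.

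\emph{Step 3 (key step).} Suppose $\phi\colon T\to K\otimes\omega_C$ is nonzero. Precomposing with the universal surjection $E\onto T$ and postcomposing with the inclusion $K\otimes\omega_C\hookrightarrow E\otimes\omega_C$ (the twist of $K\hookrightarrow E$), one obtains a nonzero map $\psi\colon E\to E\otimes\omega_C$. With respect to $E=\bigoplus_\alpha L_\alpha$, the component of $\psi$ from $L_\beta$ to $L_\alpha\otimes\omega_C$ lies in $\HH^0(C,L_\alpha\otimes L_\beta^{-1}\otimes\omega_C)$, which vanishes for $\alpha\neq\beta$ by \eqref{eqn: condition smooth l=1}; hence $\psi=\bigoplus_\alpha\psi_\alpha$ with each $\psi_\alpha\colon L_\alpha\to L_\alpha\otimes\omega_C$ zero or injective, so $\ker\psi=\bigoplus_{\psi_\alpha=0}L_\alpha$ and $\im\psi=\bigoplus_{\psi_\alpha\neq0}\im\psi_\alpha$. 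But $\psi$ kills $K$ (it factors through $E/K=T$) and has image inside $K\otimes\omega_C$ (it factors through that subsheaf), so
\[
\im\psi\ \subseteq\ K\otimes\omega_C\ \subseteq\ (\ker\psi)\otimes\omega_C\ =\ \bigoplus_{\psi_\alpha=0}(L_\alpha\otimes\omega_C).
\]
If some $\psi_{\alpha_0}\neq0$, then $\im\psi_{\alpha_0}$ is a nonzero subsheaf of the summand $L_{\alpha_0}\otimes\omega_C$, with $\alpha_0$ absent from the right-hand side above, which is impossible; hence $\psi=0$ and $\phi=0$. Note this last step is insensitive to the genus, so $g\leqslant1$ is an independent consequence of the hypothesis rather than an ingredient. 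The one genuinely non-formal move is the passage from $\phi$ to $\psi$: it is the embedding $K\hookrightarrow E$ that lets the splitting hypothesis force block-diagonality of $\psi$, after which the two factorisations of $\psi$ pin its kernel and image into incompatible positions.
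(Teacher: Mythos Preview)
Your proof is correct and takes a genuinely different route from the paper's.

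For the genus bound, both arguments are essentially the same Riemann--Roch computation; yours is marginally slicker, summing the two degree inequalities directly rather than assuming $g\geqslant 2$ and deriving $\deg L_\alpha-\deg L_\beta=g-1$ for all $\alpha\neq\beta$.

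For smoothness, the paper proceeds via the torus action: it checks that the obstruction space $\Ext^1_C(K,T)$ vanishes at every $\TT$-fixed point (where $K$ and $T$ inherit the splitting of $E$, making the computation a direct sum of pieces controlled by \eqref{eqn: condition smooth l=1}), and then appeals to upper semi-continuity of the obstruction dimension together with the fact that every orbit closure meets the fixed locus. Your argument bypasses the torus entirely and works at an \emph{arbitrary} point, where $K$ and $T$ need not split: the trick of sandwiching $\phi$ between the surjection $E\onto T$ and the inclusion $K\otimes\omega_C\hookrightarrow E\otimes\omega_C$ produces a map $\psi\colon E\to E\otimes\omega_C$ that the hypothesis forces to be block-diagonal, after which the two factorisations pin $\ker\psi\supseteq K$ and $\im\psi\subseteq K\otimes\omega_C$ into complementary summands. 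This is more elementary (no equivariant geometry, no semi-continuity) and arguably more transparent about how the hypothesis is used. The paper's approach, on the other hand, fits naturally with the torus-fixed-locus machinery developed for the motivic computations, so it costs nothing extra in context.

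One minor quibble: the citation of \cite[Prop.~7.3]{BF_normal_cone} is slightly imprecise (that statement concerns $h^{-1}(\BE)=0$ rather than pointwise vanishing of obstructions), but the conclusion is of course correct by standard deformation theory.
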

\begin{proof}
In Section \ref{sec: torus action} we describe a $\TT$-action on $ \Quot_C(E, s, d) $, where $\TT = \BG_m^r$, such that all the torus fixed point are of the form 
\[
\bigoplus_{\alpha=1}^r \,[K_\alpha\hookrightarrow L_\alpha \onto T_\alpha] \in   \Quot_C(E, s, d)^\TT.
\]
For such a torus fixed point, by Serre duality its obstruction space satisfies
\[
\Ext^1_C\left(\bigoplus_{\alpha=1}^r K_\alpha, \bigoplus_{\beta=1}^r T_\beta\right)=\bigoplus_{\alpha, \beta=1}^r \HH^0(C, K_\alpha \otimes T_\beta^{\mathrm{fr}, *}\otimes \omega_C)^*,
\]
where $ T_\beta^{\mathrm{fr}}$ denotes the free part of $T_{\beta}$. Now, for each $\alpha=1, \dots, r$ we have that $(K_\alpha, T_\alpha)$ is either $(0, L_\alpha)$ or $ (L_\alpha\otimes \OO_C(-Z_\alpha), \OO_{Z_\alpha})$, for a suitable closed subscheme $Z_\alpha\subset C$ (see \eqref{eqn: cases for quotient}). Denote by $J$ the collection of indices $\alpha$ such that  $(K_\alpha, T_\alpha)=(0, L_\alpha)$.  Therefore we have
\begin{align*}
    \bigoplus_{\alpha, \beta=1}^r \HH^0(C, K_\alpha \otimes  T_\beta^{\mathrm{fr}, *}\otimes \omega_C)&= \bigoplus_{\substack{\alpha\notin J\\ \beta\in J}}\HH^0(C, L_\alpha \otimes  L_\beta^{-1}\otimes \OO_C(-Z_\alpha) \otimes \omega_C)\\
    &\hookrightarrow \bigoplus_{\substack{\alpha\notin J\\ \beta\in J}}\HH^0(C, L_\alpha \otimes  L_\beta^{-1} \otimes \omega_C)=0.
\end{align*}
The vanishing of the obstruction space implies that all torus fixed points are smooth and unobstructed, which implies that all points are smooth and unobstructed as well\footnote{This follows by the fact that, in our case, the dimension of the obstruction space is upper semi-continuous, and that the closure of each torus orbit contains a torus fixed point.}.

Assume now by contradiction  that $g\geqslant 2$. Then \eqref{eqn: condition smooth l=1} implies that for all $\alpha\neq \beta$ we have
   \begin{align*}
       &\HH^1(L_\alpha\otimes L_\beta^{-1})=\HH^0(L_\alpha^{-1}\otimes L_\beta\otimes \omega_C)^*=0,\\
       &\HH^0(L_\alpha\otimes L_\beta^{-1})\hookrightarrow \HH^0(L_\alpha\otimes L_\beta^{-1}\otimes \omega_C)=0,
   \end{align*}
   which by Riemann--Roch  implies that 
   \[
   \deg L_\alpha -\deg L_\beta= g-1,
   \]
which is satisfied for all $\alpha\neq \beta$ only if $g=1$.
\end{proof}
We give some nontrivial instances of the vanishing \eqref{eqn: condition smooth l=1}.
\begin{example}\label{ex:smoothness}
  Let $C\cong \BP^1$, then all locally free sheaves admit a splitting $E=\bigoplus_{\alpha=1}^r L_\alpha$, and    \eqref{eqn: condition smooth l=1} translates into
\[
\deg L_\alpha -\deg L_{\beta} \leqslant 1,
\]
for all $\alpha\neq \beta$, i.e.~$E=L\otimes \left(\OO^{\oplus n}\oplus \OO(1)^{\oplus m}\right)$ where $L$ is any line bundle.

   Let $C$ be an elliptic curve and $E=\bigoplus_{\alpha=1}^r\OO_C(p_\alpha)$, where $p_\alpha\in C$ are distinct points. Then for each $\alpha\neq \beta$ the line bundles $\OO_C(p_\alpha-p_\beta)$ have no global sections, which implies that $(C, E)$ satifies \eqref{eqn: condition smooth l=1} and the corresponding Quot scheme is smooth.
\end{example}

\subsection{Smoothness in the nested case}
In the nested case, we show that there are non-trivial examples of smooth and unobstructed hyperquot schemes at least in genus 0.
\begin{theorem}\label{prop: smooth nested flag}
Let $L$ be any line bundle on $\BP^1$ and   $E=L\otimes \left(\OO^{\oplus n}\oplus \OO(1)^{\oplus m}\right)$ for some $n, m$. Then the hyperquot scheme $\Quot_{\BP^1}(E, \ns, \nd) $ is smooth and unobstructed for all $\nd$ and for all $\ns$.
\end{theorem}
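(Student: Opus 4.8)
The plan is to use the torus action and the tangent--obstruction theory of \Cref{cor:tangent-obs}, reducing the smoothness of $\QQ = \Quot_{\BP^1}(E,\ns,\nd)$ to the vanishing of the obstruction space $T^2_z = h^1(\Gamma_z[-1])$ at the $\TT$-fixed points. Since $E = L\otimes(\OO^{\oplus n}\oplus\OO(1)^{\oplus m})$ splits into line bundles, Section~\ref{sec: torus action} will furnish a $\TT=\BG_m^r$-action for which every fixed point decomposes as a direct sum over the $r$ line-bundle summands $L_\alpha$ of $E$; tensoring by $L$ changes nothing, so we may as well take $L=\OO$. As in \Cref{prop: smooth l=1}, the dimension of the obstruction space is upper semicontinuous and the closure of every orbit meets the fixed locus, so it suffices to check $T^2_z=0$ for $z\in\QQ^\TT$. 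First I would make the combinatorial structure of a fixed point explicit: a fixed point is a collection of length-$l$ flags of subsheaves, one inside each summand $L_\alpha\in\{\OO,\OO(1)\}$, so each $K_i$ is a direct sum $\bigoplus_\alpha K_{i,\alpha}$ with $K_{i,\alpha}\subseteq K_{i-1,\alpha}\subseteq L_\alpha$ and each quotient $T_i=\bigoplus_\alpha T_{i,\alpha}$ being a (possibly zero) torsion sheaf plus possibly the free summand $L_\alpha$ when $K_{i,\alpha}=0$.

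The core computation is to bound the obstruction space. By \Cref{cor:tangent-obs} and the triangle \eqref{delta-restricted}, $T^2_z$ fits into the long exact sequence coming from $\RR_z\Delta\colon \bigoplus_i\RHom(K_i,T_i)\to\bigoplus_i\RHom(K_{i+1},T_i)$, so $T^2_z=0$ will follow once I show that $\bigoplus_{i=1}^l\Ext^1_C(K_i,T_i)\to\bigoplus_{i=1}^{l-1}\Ext^1_C(K_{i+1},T_i)$ is injective, which in turn follows from the stronger and cleaner statement that all the groups $\Ext^1_{\BP^1}(K_i,T_i)$ vanish; alternatively one argues directly with the $h^1$ of $\Gamma_z[-1]$ as in the proof of \Cref{prop: smooth zero dim}. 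Because everything is a direct sum over $\alpha$, $\Ext^1_{\BP^1}(K_i,T_i)=\bigoplus_{\alpha,\beta}\Ext^1_{\BP^1}(K_{i,\alpha},T_{i,\beta})$, and by Serre duality on $\BP^1$ this is dual to $\bigoplus_{\alpha,\beta}\Hom_{\BP^1}(T_{i,\beta},K_{i,\alpha}\otimes\OO(-2))$. Now $K_{i,\alpha}$ is a subsheaf of $\OO$ or $\OO(1)$, hence of the form $\OO(-a)$ with $a\geqslant 0$ or $\OO(1-a)=\OO$ or $\OO(1)$ with $a\geqslant 0$ — in all cases a line bundle of degree $\leqslant 1$; so $K_{i,\alpha}\otimes\OO(-2)$ has degree $\leqslant -1$, hence no global sections and no maps from the free part of $T_{i,\beta}$. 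The contributions from the torsion part of $T_{i,\beta}$ vanish because $K_{i,\alpha}$ is locally free (a torsion sheaf has no nonzero maps to a locally free sheaf). This gives $\Ext^1_{\BP^1}(K_{i,\alpha},T_{i,\beta})=0$ for all $i,\alpha,\beta$, whence $T^2_z=0$.

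With the obstruction space vanishing at every fixed point, upper semicontinuity and the orbit-closure argument (exactly as in the footnote of \Cref{prop: smooth l=1}) show $T^2_z=0$ at every point of $\QQ$, so $\QQ$ is smooth; unobstructedness is the statement that the perfect obstruction theory has vanishing obstruction sheaf, which is the same vanishing. I expect the main obstacle to be purely bookkeeping rather than conceptual: one must be careful that the torus-fixed points really do split compatibly across the whole flag (i.e.\ that the nesting condition $K_{i,\alpha}\subseteq K_{i-1,\alpha}$ is respected summand-by-summand), and that the degree bound ``$\deg K_{i,\alpha}\leqslant 1$'' holds for \emph{every} term in the flag and not just the top one — but since each $K_{i,\alpha}$ injects into $L_\alpha$ which has degree $0$ or $1$, and $\BP^1$-subsheaves of a line bundle are again line bundles of no larger degree, this is immediate. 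A secondary point to get right is handling the free summands of the $T_{i,\beta}$ uniformly with the torsion summands; both cases are covered by the single observation that $\deg(K_{i,\alpha}\otimes\OO(-2))<0$ kills all maps from line bundles and local freeness of $K_{i,\alpha}$ kills all maps from torsion sheaves.
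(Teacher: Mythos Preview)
Your approach is the same as the paper's: reduce to torus-fixed points, decompose into line-bundle summands, and kill the obstruction via Serre duality on $\BP^1$. The computation you give for $\Ext^1_{\BP^1}(K_{i,\alpha},T_{i,\beta})=0$ is correct and matches the paper's.

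There is, however, a genuine gap in your reduction step. You assert that ``$T^2_z=0$ will follow once I show that $\bigoplus_i\Ext^1(K_i,T_i)\to\bigoplus_i\Ext^1(K_{i+1},T_i)$ is injective,'' but this is not sufficient. The long exact sequence of the triangle \eqref{delta-restricted} reads
\[
\bigoplus_i\Hom(K_i,T_i)\xrightarrow{\ \Delta_z\ }\bigoplus_i\Hom(K_{i+1},T_i)\longrightarrow T^2_z\longrightarrow\bigoplus_i\Ext^1(K_i,T_i)\longrightarrow\cdots,
\]
so after your vanishing $\Ext^1(K_i,T_i)=0$ you are left with $T^2_z=\coker(\Delta_z)$, and you still owe the surjectivity of $\Delta_z$ on the level of $\Hom$'s. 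The paper does not skip this: besides $\Ext^1(K_{i,\alpha},T_{j,\beta})=0$ it also records the vanishing $\Ext^1(K_{i,\alpha},Q_{j,\beta})=0$ for the successive quotients $Q_{j,\beta}=K_{j-1,\beta}/K_{j,\beta}$, and from these deduces that $\Delta_z$ is onto. Your fallback ``argue as in \Cref{prop: smooth zero dim}'' points at the right template, but beware that in that proposition every $Q_i$ is $0$-dimensional, so $\Ext^1(\text{loc.\ free},Q_i)=0$ is automatic; here some $Q_{i,\alpha}$ are line bundles, and that extra vanishing needs its own argument, which you have not supplied.
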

\begin{proof}
The proof is similar to the ones of Propositions \ref{prop: smooth zero dim}, \ref{prop: smooth l=1}. Without loss of generality, we just need to prove our claim for a torus fixed point
    \[
 z= \bigoplus_{\alpha=1}^{r}\left(K_{l,\alpha}\hookrightarrow \cdots \hookrightarrow K_{1, \alpha} \hookrightarrow L_\alpha\onto T_{l,\alpha}\twoheadrightarrow\dots \twoheadrightarrow T_{1,\alpha} \right) \,\in\,   \Quot_{\BP^1}(E, \ns, \nd)^\TT.
\]
Set $E=\bigoplus_\alpha L_\alpha$. Consider the exact sequences
\[
\begin{tikzcd}[row sep=tiny]
0 \arrow{r} & K_{i,\alpha} \arrow{r} & K_{i-1,\alpha} \arrow{r} & Q_{i,\alpha} \arrow{r} & 0 \\
0 \arrow{r} & Q_{i,\alpha} \arrow{r} & T_{i,\alpha} \arrow{r} & T_{i-1,\alpha} \arrow{r} & 0
\end{tikzcd}
\]
for every $i=1,\dots, l$ and $\alpha=1, \dots, r$. Each $Q_{i,\alpha} $ either satisfies  $Q_{i, \alpha}\cong L_\alpha$ or is $0$-dimensional. Since $K_{i, \alpha}$ is locally free, by Serre duality we obtain the vanishings
\begin{align*}\label{eqn: vanishings}
\Ext^p_C(K_{i, \alpha}, T_{j,\beta})=\Ext^p_C(K_{i+1, \alpha}, T_{j,\beta})=\Ext^p_C(K_{i, \alpha}, Q_{j,\beta})=0, \quad p>0,
\end{align*}
which implies that $\Delta_z$, the map introduced in \eqref{ses:tg-space} is surjective. Therefore the obstruction space satisfies
\[
\begin{tikzcd}
h^1(\Gamma_z[-1])\arrow[hook]{r} & \displaystyle\bigoplus_{i=1}^l \Ext^1_{\BP^1}(K_i, T_i)=0, 
\end{tikzcd}
\]
by which we conclude that the obstruction space $h^1(\Gamma_z[-1])$ vanishes as well.
\end{proof}
In general, there are examples of smooth yet obstructed hyperquot schemes.
\begin{example}
Set $E=\OO_C^{\oplus r}$ and $\nd = \mathbf{0}$. Then for all $\ns$,  the hyperquot scheme is isomorphic to the partial flag variety $\Quot_C(\OO_C^{\oplus r}, \ns, \mathbf{0})\cong \Flag(\ns,r)$, but by \Cref{thm: pot 2 nest} it is endowed with a perfect obstruction theory of virtual dimension $(1-g(C)) \cdot \dim \Flag(\ns,r)$, which implies that it is smooth but obstructed as soon as $g(C)\geqslant 1$.
\end{example}

\section{Torus fixed locus}
\subsection{Torus action}\label{sec: torus action}
We fix $\ns=(s_1\leqslant \dots \leqslant s_l)$ an $l$-tuple of non-decreasing non-negative integers and $\nd=(d_1, \dots, d_l)$  an $l$-tuple of integers. Let $E$ be a locally free sheaf of rank $r>0$ on a smooth projective curve  $C$. Assume that $E=\bigoplus_{\alpha=1}^r L_\alpha$ splits into a sum of line bundles on $C$. Then $\Quot_C(E,\ns, \nd)$ admits the action of the algebraic torus $\TT=\BG_m^r$ as in \cite{Bifet}. Indeed, $\TT$ acts diagonally on the product $\prod_{i=1}^l \Quot_C(E,s_i, d_i)$ and the closed subscheme $\Quot_C(E,\ns, \nd)$ is $\TT$-invariant. Its fixed locus is determined by a straightforward generalisation of the main result of \cite{Bifet}, whose proof is analogous to \cite[Prop. 3.1]{MR_nested_Quot} (see also \cite[Sec.  3.1]{Chen_hyperquot}).

\begin{lemma}\label{lemma: fixed locus}
     If $E=\bigoplus_{\alpha=1}^r L_\alpha$, there is a scheme-theoretic identity
     \begin{align*}
    \Quot_C\left(E, \ns, \nd\right)^{\TT} = \coprod_{\ns=\ns_{1}+\dots + \ns_r}\coprod_{\nd=\nd_1+\dots +\nd_r}\prod_{\alpha=1}^r   \Quot_C(L_\alpha,\ns_\alpha, \nd_\alpha).
\end{align*}
\end{lemma}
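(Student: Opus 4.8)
The plan is to reduce the statement to the universal property of the hyperquot functor together with the description of the fixed points of the torus action on the ambient product $\prod_{i=1}^l \Quot_C(E,s_i,d_i)$, following the template of \cite[Prop.~3.1]{MR_nested_Quot} and \cite[Prop.~3.1]{Bifet}. First I would recall that a $B$-point of $\Quot_C(E,\ns,\nd)^{\TT}$ is a $\TT$-equivariant family of nested quotients $E_B\onto \CT_l\onto\cdots\onto\CT_1$ on $C\times B$, where $\TT$ acts on $E_B=\bigoplus_\alpha L_{\alpha,B}$ through the characters of the $r$ factors (and trivially on $C\times B$). Since each $L_\alpha$ sits in a distinct weight space and the weight spaces are preserved by any $\TT$-equivariant subsheaf, each universal kernel $\CK_i=\ker(E_B\onto\CT_i)$ decomposes as $\CK_i=\bigoplus_\alpha \CK_{i,\alpha}$ with $\CK_{i,\alpha}\subseteq L_{\alpha,B}$, and the nesting $\CK_l\into\cdots\into\CK_1$ is equivalent to $\CK_{l,\alpha}\into\cdots\into\CK_{1,\alpha}$ for each $\alpha$ separately. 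Dually, $\CT_i=\bigoplus_\alpha \CT_{i,\alpha}$ with $\CT_{i,\alpha}$ a quotient of $L_{\alpha,B}$, and the surjections $\CT_i\onto\CT_{i-1}$ split into $\CT_{i,\alpha}\onto\CT_{i-1,\alpha}$. Flatness over $B$ is inherited by each summand, and the numerical invariants add up: $\sum_\alpha \rk\CT_{i,\alpha}=s_i$ and $\sum_\alpha\deg\CT_{i,\alpha}=d_i$. This exhibits the data of a $\TT$-fixed $B$-point as an $r$-tuple of $B$-points of the hyperquot schemes $\Quot_C(L_\alpha,\ns_\alpha,\nd_\alpha)$, where $\ns=\ns_1+\cdots+\ns_r$ and $\nd=\nd_1+\cdots+\nd_r$ record the weight-by-weight invariants, which are locally constant in $B$ and hence determine the connected component.

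Next I would observe that this construction is functorial in $B$ and visibly reversible: given $r$-tuples of nested quotients of the $L_\alpha$ with the matching invariants, taking direct sums produces a $\TT$-fixed nested quotient of $E$. Thus the assignment defines an isomorphism of functors
\[
\Quot_C(E,\ns,\nd)^{\TT}\;\simeq\;\coprod_{\ns=\ns_1+\cdots+\ns_r}\;\coprod_{\nd=\nd_1+\cdots+\nd_r}\;\prod_{\alpha=1}^r \mathsf{Quot}_C(L_\alpha,\ns_\alpha,\nd_\alpha),
\]
where the coproduct is finite (only finitely many splittings of $\ns$ are allowed since $0\leqslant s_{i,\alpha}\leqslant 1$-forced numerics keep each $\ns_\alpha$ bounded, and for each such splitting only finitely many $\nd_\alpha$ give nonempty factors with the global degree fixed — more precisely the non-emptiness of $\Quot_C(L_\alpha,\ns_\alpha,\nd_\alpha)$ bounds $\nd_\alpha$ from one side, and the sum constraint from the other). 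Representability of both sides then upgrades this to a scheme-theoretic isomorphism. For the existence and representability of the fixed-point scheme $\Quot_C(E,\ns,\nd)^{\TT}$ I would cite the standard fact that the fixed locus of a torus acting on a projective (or separated finite-type) scheme is a closed subscheme, applied to the $\TT$-action on $\Quot_C(E,\ns,\nd)$ coming from the closed immersion \eqref{eqn:embedding_in_product} into the $\TT$-invariant product $\prod_i\Quot_C(E,s_i,d_i)$.

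The main obstacle, and the only point requiring genuine care, is the \emph{scheme-theoretic} (as opposed to set-theoretic) nature of the identity: one must check that the natural morphism from the right-hand side to the fixed locus is not merely a bijection on points but an isomorphism of schemes. The cleanest route is to prove the functorial isomorphism above directly at the level of $B$-points for arbitrary $\bfk$-schemes $B$ — the weight decomposition of a $\TT$-equivariant sheaf on $(C\times B)$ with $\TT$ acting trivially on the base is functorial and exact, so it commutes with kernels, cokernels, base change, and preserves $B$-flatness — and then invoke Yoneda. This is exactly the argument of \cite[Prop.~3.1]{MR_nested_Quot}, which treats the unnested and $0$-dimensional cases; the nested and positive-rank generalisation needed here is formally identical, since the nesting conditions and the Hilbert-polynomial constraints are all compatible with the weight decomposition. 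I would therefore structure the proof as: (i) recall the closed-immersion description of the fixed locus; (ii) set up the weight decomposition and check it is functorial and exact; (iii) deduce the bijection on $B$-points and conclude by representability, noting finiteness of the index set.
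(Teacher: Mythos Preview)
Your proposal is correct and follows exactly the approach the paper indicates: the paper does not spell out a proof but simply states that it is a straightforward generalisation of \cite{Bifet} with an argument analogous to \cite[Prop.~3.1]{MR_nested_Quot}, which is precisely the weight-decomposition-plus-Yoneda argument you outline. Your write-up is in fact more detailed than what appears in the paper; the only cosmetic point is that the finiteness of the index set is immediate from $s_{i,\alpha}\in\{0,1\}$ (since $\rk L_\alpha=1$), which you could state more cleanly.
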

In other words, Lemma \ref{lemma: fixed locus} says that all  torus fixed points
\begin{align*}
[K_{l}\hookrightarrow \cdots \hookrightarrow K_{1 } \hookrightarrow E \onto T_{l}\twoheadrightarrow\dots \twoheadrightarrow T_{1}]  \,\in\, \Quot_C\left(E, \ns, \nd\right)^{\TT}
\end{align*}
are  of the form
\begin{align}\label{eqn: fixed}
\bigoplus_{\alpha=1}^{r}\left(K_{l,\alpha}\hookrightarrow \cdots \hookrightarrow K_{1, \alpha} \hookrightarrow L_\alpha\onto T_{l,\alpha}\twoheadrightarrow\dots \twoheadrightarrow T_{1,\alpha} \right)\,\in\, \prod_{\alpha=1}^r\Quot_C\left(L_\alpha, \ns_{\alpha}, \nd_{\alpha} \right),
\end{align}
for some suitable $\ns_{\alpha}, \nd_{\alpha}$ for $\alpha=1, \dots, r$. Notice that for each pair $(K_{i,\alpha}, T_{i, \alpha})$ there are two possibilities. Namely, we have
\begin{align}\label{eqn: cases for quotient}
    \begin{cases}
        K_{i, \alpha} = L_\alpha\otimes\OO_C(-Z_{i,\alpha}) \\
        T_{i, \alpha}= \OO_{Z_{i, \alpha}}
    \end{cases}
    \mbox{ or } \quad 
     \begin{cases}
        K_{i, \alpha}=0 \\
        T_{i, \alpha}= L_\alpha
        \end{cases}
\end{align}
for some 0-dimensional closed subschemes $Z_{i,\alpha}\subset C $, satisfying the required nesting conditions. In particular, this implies that not all possible choices of splittings $\ns=\sum_{\alpha=1}^r\ns_{\alpha}$ and $\nd=\sum_{\alpha=1}^r\nd_{\alpha} $ contribute to the connected components of $ \Quot_C\left(E, \ns, \nd\right)^{\TT}$. In the next section we offer a convenient combinatorial description of the actual connected components appearing in  $ \Quot_C\left(E, \ns, \nd\right)^{\TT}$.

\subsection{Combinatorial description of   the fixed locus}\label{sec: comb fixed}
We keep the notation as in Section \ref{sec: torus action}. 
We give a combinatorial characterisation of the connected components appearing in the fixed locus of Lemma \ref{lemma: fixed locus}, following a description inspired by \cite[Sec. 3.1]{Chen_hyperquot}.
Given the   non-decreasing tuple $\ns=(s_1\leqslant \dots \leqslant s_l)$, define a new tuple $\nr=(r_1\geqslant \dots \geqslant r_l)$ by $ r_i=r-s_i$ for $i=1, \dots, l$. For convenience, we also set $r_0=r $ and $ r_{l+1}=0$.

\subsubsection{The unnested case} We start with the easier case of $l=1$, i.e.~no nesting conditions. By Lemma \ref{lemma: fixed locus} and \eqref{eqn: cases for quotient},  a fixed point as in  \eqref{eqn: fixed} decomposes as
\[
\begin{tikzcd}
\displaystyle\bigoplus_{\alpha=1}^{r_1}L_{c_{1,\alpha}}(-Z_{1, \alpha})
\arrow[hook]{r} & 
\displaystyle\bigoplus_{\alpha=1}^r L_\alpha\arrow[two heads]{r} & 
\displaystyle\bigoplus_{\alpha=1}^{r_1}\OO_{Z_{1, \alpha}}\oplus\bigoplus_{\substack{\beta\neq c_{1, \alpha}\\ \alpha=1, \dots, r_1}} L_\beta
\\
K_1 \arrow[equal]{u} & E\arrow[equal]{u} & T_1\arrow[equal]{u}
\end{tikzcd}
\]
for some closed 0-dimensional subschemes $Z_{1,\alpha}\subset C $ and some tuple of integers
\begin{align} \label{eqn: tuple ci}
   1 \leqslant c_{1, 1}<\dots < c_{1, r_1} \leqslant r, 
\end{align}
such that the inclusion of $K_1 \hookrightarrow E$ is ``component-wise'', i.e.~each summand $\OO_{Z_1, \alpha}$ is a quotient of $L_{c_{1, \alpha}}$. In other words, the tuple \eqref{eqn: tuple ci} selects exactly $r_1$ nonzero components (out  of the $r$ possible ones) of $K_1$.

Clearly, all tuples of the form \eqref{eqn: tuple ci} are in bijection with elements\footnote{By convention, we represent elements of $S_r$ as bijections $\sigma\colon \set{1, \dots, r}\to \set{1, \dots, r}.$} $\sigma\in S_r $, i.e. permutations of $r$ elements, with the condition that
\begin{align*}
    \sigma(\alpha)<\sigma(\beta) \quad\mbox{ if } \quad 1\leqslant \alpha<\beta\leqslant r_1  \quad\mbox{ or } \quad  r_1+1\leqslant \alpha<\beta\leqslant r_0=r,
\end{align*}
by declaring $c_{1, \alpha}=\sigma(\alpha)$. We denote the collection of such permutations by $P_{r_1}$.

Therefore, the fixed locus decomposes as
\begin{align}\label{eqn: fixed locus hyperquot combin}
\begin{split}
     \Quot_C\left(E, s_1, d_1\right)^{\TT}&=\coprod_{\sigma\in P_{ r_1}}\coprod_{\substack{d_1=\sum_{\alpha=1}^{r_1}n_{1,\alpha}+\\\sum_{\alpha=r_1+1}^{r}\deg L_{\sigma(\alpha)}}}  \prod_{\alpha=1}^{r_1}\Quot_C\left(L_{\sigma(\alpha)}, 0, n_{1,\alpha}\right)\\
    &\cong \coprod_{\sigma\in P_{ r_1}}\coprod_{\substack{d_1=\sum_{\alpha=1}^{r_1}n_{1,\alpha}+\\\sum_{\alpha=r_1+1}^{r}\deg L_{\sigma(\alpha)}}} \prod_{\alpha=1}^{r_1} C^{(n_{1,\alpha})},
\end{split}
\end{align}
where in the last line we used the isomorphism
\[ \Quot_C\left(L_{\sigma(\alpha)}, 0, n_{1,\alpha}\right)\cong C^{(n_{1,\alpha})}. \]

\subsubsection{The nested case}\label{sec: l geq 1} If the lenght of the nestings is $l\geqslant 1$, the flag of the kernels of a fixed point as in \eqref{eqn: fixed} is of the form
\begin{equation}\label{eqn: flag of kernels l}
\begin{tikzcd}[column sep=small]
\displaystyle\bigoplus_{1\leqslant \alpha\leqslant r_l}L_{c_{l,\alpha}}(-Z_{l, \alpha}) \arrow[hook]{r}
& \displaystyle\bigoplus_{\substack{l-1\leqslant j\leqslant l \\ r_{j+1}+1\leqslant \alpha \leqslant r_j}}L_{c_{j,\alpha}}(-Z_{l-1, \alpha}) \arrow[hook]{r}
& \cdots \arrow[hook]{r}
& \displaystyle\bigoplus_{\substack{1\leqslant j\leqslant l \\ r_{j+1}+1\leqslant \alpha \leqslant r_j}}L_{c_{j,\alpha}}(-Z_{1, \alpha})\arrow[hook]{r}
& E \\
K_l\arrow[equal]{u} &K_{l-1}\arrow[equal]{u} & & K_1\arrow[equal]{u}
\end{tikzcd}
\end{equation}
for some closed 0-dimensional subschemes $Z_{j,\alpha}\subset C $ and some tuples of integers
\begin{align}\label{eqn: c_i more}
\begin{split}
 &  1 \leqslant c_{l, 1}<\dots < c_{l, r_l} \leqslant r, \\
 &  1 \leqslant c_{l-1, r_{l}+1}<\dots < c_{l-1, r_{l-1}} \leqslant r, \\
&\vdots\\
 &  1 \leqslant c_{1, r_{2}+1}<\dots < c_{1, r_1} \leqslant r,
\end{split}
\end{align}
where the collections of indices $\set{c_{j,r_{j+1}+1}, \dots, c_{j, r_j}}$ are pairwise disjoint, for all $j=1,\dots, l$. 

As for the case of no nestings, each inclusion   $K_{j+1} \hookrightarrow K_{j}$ is ``component-wise'', i.e. each  $\OO_{Z_j, \alpha}$ is a quotient of $L_{c_{j, \alpha}}$ (as a summand of $E$). In other words, the tuples in \eqref{eqn: c_i more} select which components of  $K_j$ are nonzero.

Again, as for the case $l=1$, all tuples of the form \eqref{eqn: c_i more} are in bijection with elements $\sigma\in S_r $, with the condition that 
\begin{align*}
    \sigma(\alpha)<\sigma(\beta) \quad\mbox{ if } \quad r_{j+1}+1\leqslant \alpha<\beta\leqslant r_j  \quad\mbox{ for } \quad j=0, \dots, l,
\end{align*}
by declaring $c_{j, \alpha}=\sigma(\alpha)$ whenever $r_{j+1}+1\leqslant \alpha\leqslant r_j$ for $j=1, \dots, l$. We denote the collection of such permutations by $P_{\nr}$. Therefore \eqref{eqn: flag of kernels l} can be rewritten more compactly as
\[
\begin{tikzcd}
\displaystyle\bigoplus_{\alpha=1}^{r_l}L_{\sigma(\alpha)}(-Z_{l, \alpha})\arrow[hook]{r} & \displaystyle\bigoplus_{\alpha=1}^{r_{l-1}}L_{\sigma(\alpha)}(-Z_{l-1, \alpha})\arrow[hook]{r} & \cdots\arrow[hook]{r} & \displaystyle\bigoplus_{\alpha=1}^{r_1}L_{\sigma(\alpha)}(-Z_{1, \alpha})\arrow[hook]{r} & E\\ 
K_l\arrow[equal]{u} & K_{l-1}\arrow[equal]{u} & & K_1\arrow[equal]{u} &
\end{tikzcd}
\]
and the fixed locus decomposes as
\begin{align}\label{eqn: fixed locus of nested Quot with sigma nn}
\begin{split}
    \Quot_C\left(E, \ns, \nd\right)^{\TT}&=\coprod_{\sigma\in P_{ \nr}}\coprod_{\substack{(\nn_\alpha)_\alpha, \\ d_j=\sum_{\alpha=1}^{r_j} n_{j, \alpha}+\\\sum_{\alpha=r_j+1}^{r}\deg L_{\sigma(\alpha)}} }    \prod_{\alpha=1}^{r_1}\Quot_C\left(L_{\sigma(\alpha)}, \mathbf{0}, \nn_{\alpha}\right)\\
    &\cong \coprod_{\sigma\in P_{ \nr}}\coprod_{\substack{(\nn_\alpha)_\alpha, \\ d_j=\sum_{\alpha=1}^{r_j} n_{j, \alpha}+\\\sum_{\alpha=r_j+1}^{r}\deg L_{\sigma(\alpha)}} }\prod_{\alpha=1}^{r_1} C^{[\nn_\alpha]}.
\end{split}
\end{align}
Here, the second disjoint union is over all collection of tuples $(\nn_\alpha)_{\alpha=1, \dots, r_1}$ where for each $r_{j+1}+1\leqslant \alpha \leqslant r_{j}$, we have  $\nn_\alpha=(n_{1,\alpha}\leqslant \dots \leqslant n_{j,\alpha})$, and in the last equality we used that
\[
\Quot_C\left(L_{\sigma(\alpha)}, \mathbf{0}, \nn_{\alpha}\right)\cong C^{[\nn_\alpha]}.
\]
Notice also that, if $r_{j+1}+1\leq\alpha\leqslant r_j$, then 
\begin{align*}
    C^{[\nn_\alpha]}\cong \prod_{i=1}^{j} C^{(n_{i,\alpha}-n_{i-1, \alpha})},
\end{align*}
where we set $n_{0,\alpha}=0$ for convenience.

\subsection{Topological Euler characteristic}\label{sec:euler-top}
Consider $\bfk=\BC$ for this section. 
Let $\ns=(s_1 \leqslant \dots \leqslant s_l)$ be a non-decreasing tuple. Let $C$ be a  smooth projective curve of genus $g$ and $E=\bigoplus_{\alpha=1}^r L_\alpha$ a split locally free sheaf. We compute the generating series of topological Euler characteristics
\[
\mathsf{Z}^{\mathrm{top}}_{C,E, \ns}(\nq)=\sum_{\nd}\,e_{\mathrm{top}}(\Quot_C(E, \ns, \nd))\boldit{q}^{\nd}\in \BZ(\!( q_1, \dots, q_l)\!),
\]
without any smoothness assumption. 
\begin{theorem}\label{thm:euler-top}
There is an identity
\begin{align*}
\mathsf{Z}^{\mathrm{top}}_{C,E, \ns}(\nq)&=\left(\sum_{\sigma\in P_{ \nr}}\prod_{j=1}^{l} q_j^{ \sum_{\alpha=r_j+1}^{r}\deg L_{\sigma(\alpha)}}\right)\cdot\prod_{1\leqslant i\leqslant j\leqslant l}(1-q_i\cdots q_j)^{(2g-2)(r_j-r_{j+1})}.
    \end{align*}
\end{theorem}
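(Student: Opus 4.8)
The plan is to compute the Euler characteristic by torus localisation, using the fact that $e_{\mathrm{top}}$ is motivic and hence $e_{\mathrm{top}}(\Quot_C(E,\ns,\nd)) = e_{\mathrm{top}}(\Quot_C(E,\ns,\nd)^{\TT})$ for the $\TT=\BG_m^r$-action of \Cref{sec: torus action}. So the first step is to feed the decomposition \eqref{eqn: fixed locus of nested Quot with sigma nn} of the fixed locus into the generating series. This turns $\mathsf{Z}^{\mathrm{top}}_{C,E,\ns}(\nq)$ into a sum over $\sigma \in P_{\nr}$ of a term $\boldit{q}^{\,\sum_\alpha \text{(frame degrees)}}$ — which produces the prefactor $\sum_\sigma \prod_j q_j^{\sum_{\alpha=r_j+1}^r \deg L_{\sigma(\alpha)}}$ verbatim — times the generating series over the tuples $(\nn_\alpha)$ of $\prod_\alpha e_{\mathrm{top}}(C^{[\nn_\alpha]})$, with the $q$-weight tracking the degrees $n_{j,\alpha}$.

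**Reducing to nested Hilbert schemes of a curve.** The second step is to evaluate $\sum_{\nn} e_{\mathrm{top}}(C^{[\nn]})\,q_1^{n_1}\cdots q_l^{n_l}$ where the sum is over $\nn = (n_1 \leqslant \cdots \leqslant n_j)$ attached to an index $\alpha$ with $r_{j+1}+1 \leqslant \alpha \leqslant r_j$ (here only $q_1,\dots,q_j$ appear, since $n_{i,\alpha}$ is only defined for $i \leqslant j$). Using the observation $C^{[\nn]} \cong \prod_{i=1}^j C^{(n_i - n_{i-1})}$ (with $n_0 = 0$), I would re-index by the consecutive differences $m_i = n_i - n_{i-1} \geqslant 0$, so that the $q$-weight becomes $\prod_{i=1}^j q_i^{n_i} = \prod_{i=1}^j (q_i q_{i+1}\cdots q_j)^{m_i}$. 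Since $e_{\mathrm{top}}(C^{(m)}) = \binom{2g-2+m}{m}$ (from $\zeta_C^{\mathrm{top}}(t) = (1-t)^{2g-2}$ after setting $\BL=1$ in the motivic zeta function, or directly from the symmetric product), the sum factors as $\prod_{i=1}^j \sum_{m_i \geqslant 0} \binom{2g-2+m_i}{m_i}(q_i\cdots q_j)^{m_i} = \prod_{i=1}^j (1-q_i\cdots q_j)^{-(2g-2)} = \prod_{1\leqslant i\leqslant j}(1-q_i\cdots q_j)^{2-2g}$.

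**Assembling the product over $\alpha$.** The third step is to take the product over all $\alpha = 1,\dots,r_1$ of the above. For a fixed pair $(i,j)$ with $i \leqslant j$, the factor $(1-q_i\cdots q_j)^{2-2g}$ appears once for each $\alpha$ with $r_{j+1}+1 \leqslant \alpha \leqslant r_j$, i.e.\ exactly $r_j - r_{j+1}$ times. Hence the product over $\alpha$ equals $\prod_{1\leqslant i\leqslant j\leqslant l}(1-q_i\cdots q_j)^{(2-2g)(r_j - r_{j+1})} = \prod_{1\leqslant i\leqslant j\leqslant l}(1-q_i\cdots q_j)^{(2g-2)(r_j-r_{j+1})}$, using that the exponent $(2-2g)(r_j-r_{j+1})$ is negated — note $(2g-2)(r_j-r_{j+1})$ as written is the reciprocal, so I would double-check the sign convention against the motivic statement \Cref{thm:motivic-partition-function}; since $\zeta_C^{\mathrm{top}}(t) = (1-t)^{2g-2}$ genuinely lands in $\BZ\llbracket t\rrbracket$ only when read as a formal power series and the geometric-series resummation gives $(1-t)^{-(2g-2)}$, the bookkeeping here is the one place to be careful. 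Combining this with the $\sigma$-prefactor from the first step gives exactly the claimed identity. This can be phrased cleanly as "specialise $\BL \mapsto 1$ in \Cref{thm:motivic-partition-function}" once smoothness is known, but the point of stating it separately is that the Euler-characteristic argument via motivicity of $e_{\mathrm{top}}$ and the fixed-locus description \eqref{eqn: fixed locus of nested Quot with sigma nn} needs no smoothness hypothesis.

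**Main obstacle.** The only genuine subtlety is the combinatorial reindexing in the second and third steps: correctly matching the nesting data $(\nn_\alpha)$ attached to each $\alpha$ with the range $r_{j+1}+1 \leqslant \alpha \leqslant r_j$ determining how many of the variables $q_1,\dots,q_l$ actually occur, and then getting the multiplicity $r_j - r_{j+1}$ of each factor right. Everything else is a routine application of motivic additivity of $e_{\mathrm{top}}$ and the generating function $\sum_m \binom{2g-2+m}{m} t^m = (1-t)^{-(2g-2)}$.
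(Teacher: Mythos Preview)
Your approach is essentially identical to the paper's: torus localisation reduces to the fixed locus \eqref{eqn: fixed locus of nested Quot with sigma nn}, the $\sigma$-prefactor separates out verbatim, and the remaining sum over $(\nn_\alpha)_\alpha$ factors through the generating series for $e_{\mathrm{top}}(C^{[\nn_\alpha]})$. The only difference is that where you derive the nested Hilbert scheme generating function directly from the product decomposition $C^{[\nn]}\cong\prod_i C^{(n_i-n_{i-1})}$, the paper instead cites \cite[Cor.~4.5]{MR_nested_Quot} for the identity $\sum_{\nn_\alpha} q_1^{n_{1,\alpha}}\cdots q_j^{n_{j,\alpha}} e_{\mathrm{top}}(C^{[\nn_\alpha]}) = \prod_{i=1}^j (1-q_i\cdots q_j)^{2g-2}$.

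One correction that will dissolve your sign worry: your intermediate formula $e_{\mathrm{top}}(C^{(m)}) = \binom{2g-2+m}{m}$ is wrong. MacDonald's formula gives $\sum_{m\geqslant 0} e_{\mathrm{top}}(C^{(m)})\,t^m = (1-t)^{-e_{\mathrm{top}}(C)} = (1-t)^{2g-2}$, which you in fact state correctly; but the coefficients of $(1-t)^{2g-2}$ are $(-1)^m\binom{2g-2}{m}$ (a polynomial for $g\geqslant 1$), not $\binom{2g-2+m}{m}$. If you simply use $\zeta_C^{\mathrm{top}}(t)=(1-t)^{2g-2}$ directly in the reindexed sum, each $\alpha$ with $r_{j+1}+1\leqslant\alpha\leqslant r_j$ contributes $\prod_{i=1}^j(1-q_i\cdots q_j)^{2g-2}$, and the product over $\alpha$ gives the stated exponent $(2g-2)(r_j-r_{j+1})$ with no sign discrepancy.
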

\begin{proof}
The topological Euler characteristic of a space with a torus action coincides with the topological Euler characteristic of its torus fixed locus. Therefore by \eqref{eqn: fixed locus of nested Quot with sigma nn} we have
    \begin{align*}
          \mathsf{Z}^{\mathrm{top}}_{C,E, \ns}(\nq)&=\sum_{\nd}\nq^{\nd}\sum_{\sigma\in P_{ \nr}}\sum_{\substack{\nn=(\nn_\alpha)_\alpha, \\ d_j=\sum_{\alpha=1}^{r_j} n_{j, \alpha}+\\\sum_{\alpha=r_j+1}^{r}\deg L_{\sigma(\alpha)}} }\prod_{\alpha=1}^{r_1}e_{\mathrm{top}}(C^{[\nn_\alpha]})\\
          &=\sum_{\sigma\in P_{ \nr}}\prod_{j=1}^{l} q_j^{ \sum_{\alpha=r_j+1}^{r}\deg L_{\sigma(\alpha)}}\prod_{j=1}^{l} \prod_{\alpha=r_{j+1}+1}^{r_j}\sum_{\nn_\alpha}q_1^{n_{1, \alpha}} \cdots q_j^{n_{j,\alpha}} e_{\mathrm{top}}(C^{[\nn_\alpha]}).
    \end{align*}
By \cite[Cor. 4.5]{MR_nested_Quot}, we have
    \begin{align*}
\sum_{\nn_\alpha}q_1^{n_{1, \alpha}} \cdots q_j^{n_{j,\alpha}} e_{\mathrm{top}}(C^{[\nn_\alpha]})=\prod_{i=1}^j\,(1-q_i\cdots q_j)^{2g-2},
    \end{align*}
    by which we conclude that 
    \begin{align*}
         \mathsf{Z}^{\mathrm{top}}_{C,E, \ns}(\nq)&=\sum_{\sigma\in P_{ \nr}}\prod_{j=1}^{l} q_j^{ \sum_{\alpha=r_j+1}^{r}\deg L_{\sigma(\alpha)}}\prod_{1\leqslant i\leqslant j\leqslant l}(1-q_i\cdots q_j)^{(2g-2)(r_j-r_{j+1})}.\qedhere
    \end{align*}
\end{proof}
Setting $C=\BP^1$ and $E=\OO_{\BP^1}^{\oplus r}$ in the previous theorem one recovers Chen's result
\cite[Thm~3]{Chen_hyperquot}.

\section{Motives of hyperquot schemes}
\subsection{Grothendieck ring of varieties}
The \emph{Grothendieck ring of~~$\bfk$-varieties}, denoted $K_0(\Var_{\bfk})$, is the free abelian group generated by isomorphism classes $[X]$ of $\bfk$-varieties, modulo the scissor relations, namely the identities $[X] = [Z] + [X\setminus Z]$ whenever $Z \into X$ is a closed $\bfk$-subvariety of $X$. The ring structure is defined by fibre product. The ring identity is $[\Spec \bfk]$. The main rules for calculations in $K_0(\Var_{\bfk})$ are the following:
\begin{enumerate}
    \item If $X \to Y$ is a geometric bijection, i.e.~a bijective morphism, then $[X] = [Y]$.
    \item If $X \to Y$ is Zariski locally trivial with fibre $F$, then $[X] = [Y]\cdot [F]$.
\end{enumerate}

These are, indeed, the only properties that we will use. Recall that the \emph{Lefschetz motive} is defined to be the class $\BL = [\BA^1_{\bfk}] \in K_0(\Var_{\bfk})$.

\subsection{Białynicki-Birula decomposition for hyperquot schemes}
We recall the classical result of Białynicki-Birula.

\begin{theorem}[{Białynicki-Birula \cite[Sec.~4]{BB_some_theorems}}]\label{thm: BB}
Let $X$ be a smooth projective scheme with a $\BG_m$-action and let $\set{X_i}_i$ be the connected components of the $\BG_m$-fixed locus $X^{\BG_m} \subset X$. Then there exists a locally closed stratification $X=\coprod_i X^+_i$, such that each $X^+_i\to X_i$ is an affine fibre bundle. Moreover, for every closed point $x\in X_i$, the tangent space is given by $T_x(X^+_i)=T_x(X)^{\fix}\oplus T_x(X)^{+}$, where $T_x(X)^{\fix}$ (resp. $T_x(X)^{+} $) denotes the $\BG_m$-fixed  (resp. positive) part of $T_x(X)$. In particular, the relative dimension of $X^+_i\to X_i$ is equal to $\dim T_x(X)^{+}$ for $x \in X_i$.
\end{theorem}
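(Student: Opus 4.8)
The plan is to realise each stratum $X_i^+$ as the \emph{attracting set} of the fixed component $X_i$ under the $\BG_m$-action, and to analyse it via a local linearisation at the fixed points.

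First I would construct the limit map. Since $X$ is proper, for every closed point $x\in X$ the orbit morphism $\BG_m\to X$, $t\mapsto t\cdot x$, extends uniquely to a morphism $\BA^1\to X$ by the valuative criterion of properness; write $\lim_{t\to 0}t\cdot x$ for the image of $0\in\BA^1$. This limit is $\BG_m$-fixed, hence lies in exactly one component $X_i$, and one sets $X_i^+=\set{x\in X\,:\,\lim_{t\to 0}t\cdot x\in X_i}$. Set-theoretically this already gives the partition $X=\coprod_i X_i^+$, and combined with properness it yields the bijectivity of $\coprod_i X_i^+\to X$ required by the notion of stratification. What remains is to show that each $X_i^+$ is locally closed in $X$, that $\gamma_i\colon X_i^+\to X_i$, $x\mapsto\lim_{t\to 0}t\cdot x$, is a morphism of schemes, and that $\gamma_i$ is a Zariski-locally trivial affine fibre bundle.

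The local model is the core of the argument. By Sumihiro's theorem, $X$ --- being quasi-projective with a torus action --- is covered by $\BG_m$-invariant affine opens; fix such a $U=\Spec A$ meeting $X_i$, with induced grading $A=\bigoplus_{n\in\BZ}A_n$. Smoothness of $X$ implies that at a fixed point $x$ the cotangent space $\Fm_x/\Fm_x^2$ is a $\BG_m$-representation; lifting a choice of homogeneous generators yields a $\BG_m$-equivariant \'etale chart onto a neighbourhood of the origin in $T_xX=T^-\oplus T^0\oplus T^+$, the decomposition of the tangent space into negative, zero and positive weight subspaces, on which $\BG_m$ acts linearly. In these coordinates $\lim_{t\to 0}t\cdot(v^-,v^0,v^+)$ exists if and only if $v^-=0$; the fixed locus is $\set{v^-=v^+=0}$; and the attracting set is $\set{v^-=0}\cong T^0\oplus T^+$, with $\gamma_i$ given by the linear projection $(v^0,v^+)\mapsto v^0$. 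Gluing these charts over an affine cover of $X_i$ then shows that $X_i^+$ is locally closed, that $\gamma_i$ is a morphism, and --- the transition functions being $\BG_m$-equivariant and compatible with the weight decomposition, hence affine-linear along the $T^+$-directions --- that $\gamma_i$ is a Zariski-locally trivial affine fibre bundle.

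Finally, the tangent statement is read directly off the local model: for $x\in X_i$, inside the equivariant chart $X_i^+$ is cut out in $X$ by the vanishing of the negative-weight coordinates, whence $T_x(X_i^+)=T^0\oplus T^+=T_x(X)^{\fix}\oplus T_x(X)^+$, and the relative dimension of $\gamma_i$ at $x$ equals $\dim T^+=\dim T_x(X)^+$. The main obstacle I expect is the passage from this pointwise/formal linearisation to an honest scheme-theoretic statement: proving $X_i^+$ locally closed and $\gamma_i$ a genuine morphism requires Sumihiro's equivariant covering together with a careful study of the graded rings $A=\bigoplus_n A_n$ --- in particular that the ideal generated by $\bigoplus_{n<0}A_n$ cuts out the attracting locus, and that in the smooth case $\bigoplus_{n\geqslant 0}A_n$ is a polynomial extension of $A_0$ --- rather than in the tangent-space bookkeeping itself.
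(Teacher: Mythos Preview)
The paper does not prove this statement: it is stated as a classical result and cited directly from Bia\l{}ynicki-Birula's original paper \cite[Sec.~4]{BB_some_theorems}, with no proof supplied. So there is no ``paper's own proof'' to compare against.

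That said, your sketch is essentially the standard argument for the Bia\l{}ynicki-Birula decomposition and is correct in outline. The key inputs you identify --- Sumihiro's theorem to obtain $\BG_m$-invariant affine charts, the weight decomposition of the tangent space at a fixed point, and the local linearisation showing the attracting set is an affine bundle over the fixed locus --- are exactly the ingredients of the classical proof. Your closing caveat about passing from the pointwise linearisation to a scheme-theoretic statement via the graded ring structure is well-placed: that is indeed where the work lies, and in the smooth projective case it goes through as you describe.
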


We now determine a  Białynicki-Birula decomposition for $\Quot_C(E, \ns, \nd)$ whenever it is smooth and unobstructed.

Let $\BG_m\hookrightarrow \TT$ be the generic 1-parameter subtorus given by $w\mapsto (w, w^2,\dots, w^r)$; it is clear that $\Quot_C(E, \ns, \nd)^\TT=\Quot_C(E, \ns, \nd)^{\BG_m}$. Let
\[
Q_{\sigma, \nn}=\prod_{\alpha=1}^{r_1} C^{[\nn_\alpha]}\subset  \Quot_C(E,\ns, \nd)^{\BG_m}
\]
be the connected component of the fixed locus corresponding to the data  $\sigma, \nn=(\nn_\alpha)_\alpha$ as in \eqref{eqn: fixed locus hyperquot combin}. 

Recall that we set $\nr=(r_1\geqslant \dots \geqslant r_l)$, where $r_i=r-s_i$ and we set $r_0=r$ and $r_{l+1}=0$ for convenience. For a fixed permutation $\sigma\in P_{\nr}$, define for all $\alpha, \beta=1, \dots, r$ 
\[
\epsilon^{\sigma}_{\alpha, \beta}=\begin{cases}
    1 & \sigma(\beta)> \sigma(\alpha),\\
    0 & \sigma(\beta)\leqslant  \sigma(\alpha).
\end{cases}
\]
Define moreover, for all $i=1, \dots, l+1$ and $\alpha=1, \dots, r$
\begin{equation}\label{eqn:h_0,h_2}
\begin{split}
  h^\sigma_0(i, \alpha)&=\sum_{\beta=r_{i}+1}^{r_{i-1}} \epsilon^\sigma_{\alpha, \beta}\\
   h^\sigma_1(i, \alpha)&=  \sum_{\beta=r_{i+1}+1}^{r_i} \epsilon^\sigma_{\beta, \alpha} + \sum_{\beta=r_i+1}^{r_{i-1}}\epsilon^\sigma_{\alpha,\beta},\\
  h^\sigma_2(i, g, v_1, \dots, v_r)&= \sum_{\alpha=r_{i+1}+1}^{r_{i}}\sum_{\beta=r_i+1}^{r} ( v_{\sigma(\beta)}-v_{\sigma(\alpha)}+ 1-g )\cdot \epsilon^\sigma_{\alpha,\beta},
\end{split}
\end{equation}
where $v_1,\ldots,v_r$ are integers. Notice that  $h^{\sigma}_0, h^\sigma_1, h^\sigma_2$ are algorithmically determined simply by the initial data of $\nr, \sigma$.
\begin{prop}\label{prop: BB for Quot}
Let $C$ be a smooth projective curve and $E=\bigoplus_{\alpha=1}^rL_\alpha$ such that the hyperquot scheme $\Quot_C(E, \ns, \nd)$ is smooth and unobstructed. Then there is a locally closed stratification 
\[
\Quot_C(E, \ns, \nd)=\coprod_{\sigma\in P_{ \nr}}\coprod_{\substack{\nn=(\nn_\alpha)_\alpha, \\ d_j=\sum_{\alpha=1}^{r_j} n_{j, \alpha}+\\\sum_{\alpha=r_j+1}^{r}\deg L_{\sigma(\alpha)}} }Q^{+}_{{\sigma, \nn}},
\]
where the second disjoint union is over all collections of tuples $(\nn_\alpha)_{\alpha=1, \dots, r_1}$ such that, for each $r_{j+1}+1\leqslant \alpha \leqslant r_{j}$, we have  $\nn_\alpha=(n_{1,\alpha}\leqslant \dots \leqslant n_{j,\alpha})$,
 and $Q^{+}_{{\sigma, \nn}}\to Q_{{\sigma, \nn}}$ is an affine fibre bundle of relative dimension 
 \begin{align*}
\sum_{i=1}^l\sum_{\alpha=1}^{r_i}n_{i, \alpha}\cdot h^\sigma_1(i, \alpha)+ \sum_{i=1}^l h^\sigma_{2}(i, g, \deg L_1,\dots,  \deg L_r).
\end{align*}
\end{prop}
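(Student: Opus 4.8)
The strategy is to apply the Białynicki-Birula theorem (\Cref{thm: BB}) to $X = \Quot_C(E,\ns,\nd)$ with the $\BG_m$-action coming from the generic one-parameter subgroup $w\mapsto(w,w^2,\dots,w^r)$ of $\TT$. Smoothness and unobstructedness of the hyperquot scheme (which is always projective) are exactly the hypotheses needed for \Cref{thm: BB} to apply, and they guarantee moreover that the fixed locus is itself smooth, so that the connected components $Q_{\sigma,\nn} = \prod_{\alpha=1}^{r_1} C^{[\nn_\alpha]}$ of \eqref{eqn: fixed locus of nested Quot with sigma nn} are precisely the $X_i$ of the theorem. The stratification $\Quot_C(E,\ns,\nd) = \coprod_{\sigma,\nn} Q^+_{\sigma,\nn}$ is then immediate, and the only real content is the identification of the relative dimension of $Q^+_{\sigma,\nn}\to Q_{\sigma,\nn}$ with the stated combinatorial expression. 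By \Cref{thm: BB}, this relative dimension equals $\dim T_z(X)^+$, the dimension of the positive-weight part of the Zariski tangent space at any closed point $z\in Q_{\sigma,\nn}$.

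\textbf{Computing the tangent space with weights.} Fix such a point $z$, of the form \eqref{eqn: fixed} with kernels $K_{i,\alpha} = L_\alpha(-Z_{i,\alpha})$ or $0$ according to \eqref{eqn: cases for quotient}. Since $z$ is smooth and the obstruction vanishes, \Cref{cor:tangent-obs} and the exact sequence \eqref{ses:tg-space} give that $\Delta_z$ is surjective, so
\[
T_z\Quot_C(E,\ns,\nd) \;=\; \ker\Bigl(\bigoplus_{i=1}^l\Hom_C(K_i,T_i)\xrightarrow{\ \Delta_z\ }\bigoplus_{i=1}^{l-1}\Hom_C(K_{i+1},T_i)\Bigr),
\]
an exact sequence of $\TT$-representations (and hence of $\BG_m$-representations). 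Now decompose each $\Hom$-space into its $\TT$-isotypic pieces: writing $T_{i,\beta} = \OO_{Z_{i,\beta}} \oplus (\text{free part} = L_\beta \text{ if } \beta\notin\{c_{i,\cdot}\})$, one has $\Hom_C(K_{i,\alpha},T_{i,\beta})$ landing in $\TT$-weight $\mathbf e_\beta - \mathbf e_\alpha$, and the generic $\BG_m$ assigns this the integer $\sigma^{-1}(\beta)$-type comparison — more precisely, under $w\mapsto(w,\dots,w^r)$ a summand indexed by the pair $(\alpha,\beta)$ with $L_\alpha,L_\beta$ the $\sigma$-th entries has positive weight exactly when $\sigma(\beta)>\sigma(\alpha)$, i.e.\ exactly the indicator $\epsilon^\sigma_{\alpha,\beta}$ is $1$. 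Taking the positive part of the whole exact sequence (positivity is preserved under kernels, so $\dim T_z^+ = \sum_{i}\dim\bigl(\bigoplus_i\Hom(K_i,T_i)\bigr)^+ - \sum_i\dim\bigl(\bigoplus_i\Hom(K_{i+1},T_i)\bigr)^+$), one reduces the computation to summing dimensions of the positive Hom-pieces, weighted by the $\epsilon^\sigma$'s.

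\textbf{The bookkeeping.} It remains to evaluate $\dim\Hom_C(K_{i,\alpha},T_{i,\beta})$ in the relevant cases and carry out the sum. There are two regimes. When $\beta$ indexes a summand on which the quotient is $0$-dimensional ($T_{i,\beta}=\OO_{Z_{i,\beta}}$ of length $n_{i,\beta}$), one gets $\hom_C(L_\alpha(-Z_{i,\alpha}),\OO_{Z_{i,\beta}}) = n_{i,\beta}$ independently of $\alpha$ and of $Z_{i,\alpha}$; collecting these contributions across the exact sequence produces, for each $i$ and each $\alpha$ with $r_{i+1}+1\le\alpha\le r_i$, the coefficient $h^\sigma_1(i,\alpha)$ of $n_{i,\alpha}$ — note the two summands in the definition \eqref{eqn:h_0,h_2} of $h^\sigma_1$ come respectively from the term $\Hom(K_i,T_i)$ (where $\alpha$ appears as the second, target, index contributing $\sum_\beta\epsilon^\sigma_{\beta,\alpha}$) and from the correction term $\Hom(K_{i+1},T_i)$ (where it appears as the source index contributing $\sum_\beta\epsilon^\sigma_{\alpha,\beta}$). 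When $\beta$ indexes a summand on which the quotient equals $L_\beta$ (the free case, $\beta$ among the $r-r_i$ "dropped" indices at level $i$), one instead computes $\hom_C(L_\alpha(-Z_{i,\alpha}),L_\beta) = \chi(L_\beta\otimes L_\alpha^{-1}(Z_{i,\alpha})) + \hom_C(L_\beta,L_\alpha(-Z_{i,\alpha}))$; by a vanishing argument (the relevant $h^1$ vanishes, which is forced by the unobstructedness hypothesis exactly as in the proof of \Cref{prop: smooth l=1}) this equals $\deg L_\beta - \deg L_\alpha + |Z_{i,\alpha}| + 1 - g$, and summing $\bigl(\deg L_{\sigma(\beta)}-\deg L_{\sigma(\alpha)}+1-g\bigr)\epsilon^\sigma_{\alpha,\beta}$ over the appropriate ranges gives $h^\sigma_2(i,g,\deg L_1,\dots,\deg L_r)$ (the $|Z_{i,\alpha}|=n_{i,\alpha}$ terms are absorbed into the $h^\sigma_1$ sum). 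Adding the two regimes over all $i$ yields exactly the claimed relative dimension $\sum_{i,\alpha} n_{i,\alpha}h^\sigma_1(i,\alpha) + \sum_i h^\sigma_2(i,g,\deg L_1,\dots,\deg L_r)$.

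\textbf{Main obstacle.} The conceptual steps are routine; the delicate point is the combinatorial bookkeeping — keeping track of which index of each $\Hom$-space is "source" versus "target," over which range the $\alpha$ and $\beta$ run at filtration level $i$ (governed by the blocks $[r_{i+1}+1,r_i]$ versus $[r_i+1,r]$ of $\sigma$), and checking that the contributions of the correction terms $\bigoplus\Hom(K_{i+1},T_i)$ subtract off correctly so that what survives is precisely $h^\sigma_1$ and $h^\sigma_2$ and nothing else. One must also be careful that the needed $h^1$-vanishings that turn $\chi$ into $\hom$ are genuinely available; these follow from the unobstructedness assumption together with the local description \eqref{eqn: cases for quotient} of the fixed points, exactly as in \Cref{prop: smooth l=1} and \Cref{prop: smooth zero dim}, but it is worth spelling out which $\Ext^1$ one is killing.
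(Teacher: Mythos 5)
Your overall strategy is the same as the paper's: smoothness and projectivity let you invoke \Cref{thm: BB} with the generic one-parameter subgroup $w\mapsto(w,\dots,w^r)$, the fixed components are identified with \eqref{eqn: fixed locus of nested Quot with sigma nn}, and everything reduces to computing $\dim T_z^+$ at a fixed point $z$ and then carrying out the combinatorial bookkeeping. There is, however, a genuine technical divergence in how you compute $\dim T_z^+$, and it is not innocuous. The paper works in equivariant $K$-theory: since $h^1(\Gamma_z[-1])=0$, the genuine representation $T_z$ equals the class of $\Gamma_z[-1]$ in $K^0_{\BG_m}(z)$, so $\dim T_z^+$ is read off from the \emph{Euler characteristics} $\rk\RR\Hom(K_{i,\alpha},T_{j,\beta})$, computed by Riemann--Roch with no individual $\Ext^1$-vanishing required. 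You instead split the short exact sequence \eqref{ses:tg-space} into $\TT$-isotypic pieces and count \emph{Hom} dimensions, which forces you to convert $\hom$ into $\chi$ and therefore requires knowing $\Ext^1_C(L_\alpha(-Z_{i,\alpha}),L_\beta)=0$ in the ``free'' case. You flag this yourself, but the justification you offer --- that the vanishing ``follows from the unobstructedness assumption exactly as in \Cref{prop: smooth l=1}'' --- runs backwards: in Propositions \ref{prop: smooth zero dim}, \ref{prop: smooth l=1}, \ref{prop: smooth nested flag} those $\Ext^1$-vanishings are the \emph{means by which} unobstructedness is proved, not a consequence of it. Abstractly, $h^1(\Gamma_z[-1])=0$ only gives surjectivity of $\Delta_z$ plus injectivity of $\bigoplus_i\Ext^1(K_i,T_i)\to\bigoplus_i\Ext^1(K_{i+1},T_i)$, which is weaker than termwise vanishing. (In every \emph{known} smooth-and-unobstructed example the stronger vanishing does hold, and in any event the $\ext^1$-terms cancel in the difference so your final formula is still correct; but your written argument does not close this gap, whereas the paper's $K$-theoretic route sidesteps it entirely.) One further small slip: the Serre-duality identity in your ``free case'' should read $\hom(L_\alpha(-Z),L_\beta)=\chi(L_\beta L_\alpha^{-1}(Z))+\hom(L_\beta,L_\alpha(-Z)\otimes\omega_C)$; you dropped the twist by $\omega_C$.
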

\begin{proof}
By our assumption the hyperquot scheme $ \Quot_C(E, \ns, \nd)$ is smooth (and projective), therefore we can apply Theorem \ref{thm: BB}. Fix $\sigma, \nn$. To compute the relative dimension of the affine fibre bundle $Q^{+}_{{\sigma, \nn}}\to Q_{{\sigma, \nn}}$, we consider a $\TT$-fixed point $z\in Q_{{\sigma, \nn}}$ and compute the positive part of the tangent space at $z$. Say that the flag
\[
\begin{tikzcd}
\displaystyle\bigoplus_{\alpha=1}^{r_l}L_{\sigma(\alpha)}(-Z_{l, \alpha}) \arrow[hook]{r} & \displaystyle\bigoplus_{\alpha=1}^{r_{l-1}}L_{\sigma(\alpha)}(-Z_{l-1, \alpha})\arrow[hook]{r} & \cdots \arrow[hook]{r} & \displaystyle\bigoplus_{\alpha=1}^{r_1}L_{\sigma(\alpha)}(-Z_{1, \alpha}) \arrow[hook]{r} & E
\\
K_l\arrow[equal]{u} & K_{l-1}\arrow[equal]{u} & & K_1\arrow[equal]{u} &
\end{tikzcd}
\]
corresponds to the point $z$.
Let $\BG_m\hookrightarrow \TT$ be the generic 1-parameter subtorus given by $w\mapsto (w, w^2,\dots, w^r)$. Since by our assumption the hyperquot scheme is unobstructed, the class in $K$-theory of the tangent bundle at $z$ coincides with the class of the dual $\BE_{\QQ}^\vee$ of the perfect obstruction theory, which by \Cref{rmk:same-K} can be computed as $\Cone(\RR_\pi\Delta)|_z$. In particular, the tangent space decomposes $\BG_m$-equivariantly as
 \begin{align*}
    T_z=\sum_{\alpha, \beta=1}^r\left(\sum_{i=1}^l \RR\Hom (K_{i, \alpha},T_{i, \beta})-\sum_{i=1}^{l-1} \RR\Hom (K_{i+1, \alpha},T_{i, \beta})\right) w^{\beta-\alpha} \in K^0_{\BG_m}(z),
\end{align*}
where, to set notation, we decompose the kernels and the quotients as
\begin{align*}
    K_i=\bigoplus_{\alpha=1}^r K_{i, \alpha}, \quad T_i=\bigoplus_{\alpha=1}^r T_{i, \alpha}, \quad i=1, \dots, l,
\end{align*}
and we use the same notation as in \Cref{sec: comb fixed}.
Therefore, the relative dimension of the affine fibre bundle coincides with
\begin{align}\label{eqn: loc contrub}
\begin{split}
\rk T_z^{ +}&=\sum_{1\leq\alpha<\beta\leqslant r}\left(\sum_{i=1}^l \rk\RR\Hom (K_{i, \alpha},T_{i, \beta})-\sum_{i=1}^{l-1} \rk\RR\Hom (K_{i+1, \alpha},T_{i, \beta})\right)\\
&=\sum_{\alpha,\beta=1}^r\left(\sum_{i=1}^l \rk\RR\Hom (K_{i, \sigma(\alpha)},T_{i, \sigma(\beta)})-\sum_{i=1}^{l-1} \rk\RR\Hom (K_{i+1, \sigma(\alpha)},T_{i, \sigma(\beta)})\right)\cdot \epsilon^{\sigma}_{\alpha,\beta}\\
&=\sum_{i=1}^l\sum_{\alpha,\beta=1}^r\left( \rk\RR\Hom (K_{i, \sigma(\alpha)},T_{i, \sigma(\beta)})- \rk\RR\Hom (K_{i+1, \sigma(\alpha)},T_{i, \sigma(\beta)})\right)\cdot \epsilon^{\sigma}_{\alpha,\beta}
\end{split}
\end{align}
where we set for convenience $K_{l+1, \sigma(\alpha)}=0$.
We consider now all the possible values of $\rk\RR\Hom (K_{i, \sigma(\alpha)},T_{j, \sigma(\beta)}) $.

\noindent
\underline{Case I.} If $\alpha> r_i$, then $K_{i, \sigma(\alpha)}=0$ and
\[
\rk\RR\Hom (K_{i, \sigma(\alpha)},T_{j, \sigma(\beta)}) =0.
\]

\noindent
\underline{Case II.} If $\alpha \leqslant r_i $ and $ \beta\leqslant r_j$, we have
\begin{align*}
    \rk\RR\Hom (K_{i, \sigma(\alpha)},T_{j, \sigma(\beta)})&= \rk\RR\Hom (L_{\sigma(\alpha)}(-Z_{i, \alpha}),\OO_{Z_{j,\beta}})\\
    &= n_{j,\beta}.
\end{align*}

\noindent
\underline{Case III.} If $\alpha \leqslant r_i $ and $ \beta> r_j$, by Riemann-Roch we have
\begin{align*}
    \rk\RR\Hom (K_{i, \sigma(\alpha)},T_{j, \sigma(\beta)})&=\rk\RR\Hom (L_{\sigma(\alpha)}(-Z_{i, \alpha}),L_{\sigma(\beta)})\\
    &=\deg L_{\sigma(\beta)}-\deg L_{\sigma(\alpha)}+n_{i,\alpha}+ 1-g.
\end{align*}
We compute now all the contributions to the rank in \eqref{eqn: loc contrub}. Fix $i=1, \dots, l$. 

\noindent
\underline{Case I.}  If $\alpha> r_i$, then $K_{i, \sigma(\alpha)}=K_{i+1, \sigma(\alpha)}=0 $ and therefore 
\[
\left( \rk\RR\Hom (K_{i, \sigma(\alpha)},T_{i, \sigma(\beta)})- \rk\RR\Hom (K_{i+1, \sigma(\alpha)},T_{i, \sigma(\beta)})\right)\cdot \epsilon^{\sigma}_{\alpha,\beta}=0.
\]

\noindent
\underline{Case II.} If $\alpha \leqslant r_{i+1}$ and $ \beta\leqslant r_i$, we have
\begin{align*}
    \left( \rk\RR\Hom (K_{i, \sigma(\alpha)},T_{i, \sigma(\beta)})- \rk\RR\Hom (K_{i+1, \sigma(\alpha)},T_{i, \sigma(\beta)})\right)\cdot \epsilon^{\sigma}_{\alpha,\beta} 
    &=(n_{i,\beta}-n_{i,\beta})\cdot \epsilon^{\sigma}_{\alpha,\beta}\\
    &=0.
\end{align*}

\noindent
\underline{Case III.} If $r_{i+1}+1\leqslant \alpha \leqslant r_i $ and $ \beta\leqslant r_i$, we have
\begin{align*}
    \left( \rk\RR\Hom (K_{i, \sigma(\alpha)},T_{i, \sigma(\beta)})- \rk\RR\Hom (K_{i+1, \sigma(\alpha)},T_{i, \sigma(\beta)})\right)\cdot \epsilon^{\sigma}_{\alpha,\beta}=n_{i, \beta}\cdot \epsilon^{\sigma}_{\alpha,\beta}.
\end{align*}

\noindent
\underline{Case IV.} If $\alpha \leqslant r_{i+1} $ and $ \beta> r_i$, we have
\begin{align*}
    \left( \rk\RR\Hom (K_{i, \sigma(\alpha)},T_{i, \sigma(\beta)})- \rk\RR\Hom (K_{i+1, \sigma(\alpha)},T_{i, \sigma(\beta)})\right)\cdot \epsilon^{\sigma}_{\alpha,\beta}=(n_{i, \alpha}-n_{i+1, \alpha})\cdot \epsilon^{\sigma}_{\alpha,\beta}.
\end{align*}

\noindent
\underline{Case V.} If $r_{i+1}+1\leqslant \alpha \leqslant r_i $ and $ \beta> r_i$, we have
\begin{multline*}
    \left( \rk\RR\Hom (K_{i, \sigma(\alpha)},T_{i, \sigma(\beta)})- \rk\RR\Hom (K_{i+1, \sigma(\alpha)},T_{i, \sigma(\beta)})\right)\cdot \epsilon^{\sigma}_{\alpha,\beta}\\
    =( \deg L_{\sigma(\beta)}-\deg L_{\sigma(\alpha)}+n_{i,\alpha}+ 1-g )\cdot \epsilon^{\sigma}_{\alpha,\beta}.
\end{multline*}
Patching everything together, \eqref{eqn: loc contrub} equals
\begin{multline}\label{eqn: tvir conto dim}
    \rk T_z^{ +}=\sum_{i=1}^l\left(\sum_{\alpha=r_{i+1}+1}^{r_i}\sum_{\beta=1}^{r_i}n_{i, \beta}\cdot \epsilon^{\sigma}_{\alpha, \beta}+\sum_{\alpha=1}^{r_{i+1}}\sum_{\beta=r_i+1}^{r}(n_{i, \alpha}-n_{i+1, \alpha})\cdot \epsilon^{\sigma}_{\alpha,\beta}\right.\\ \left. +\sum_{\alpha=r_{i+1}+1}^{r_{i}}\sum_{\beta=r_i+1}^{r} ( \deg L_{\sigma(\beta)}-\deg L_{\sigma(\alpha)}+n_{i,\alpha}+ 1-g )\cdot \epsilon^{\sigma}_{\alpha,\beta}  \right).
\end{multline}
We simplify the above expression. We have that
\begin{equation}\label{eqn: trick sulle differenze}
\begin{split}
    \sum_{i=1}^l 
    &\left( \sum_{\alpha=1}^{r_{i+1}}\sum_{\beta=r_i+1}^{r}(n_{i, \alpha}-n_{i+1, \alpha})\cdot \epsilon^{\sigma}_{\alpha,\beta}+ \sum_{\alpha=r_{i+1}+1}^{r_{i}}\sum_{\beta=r_i+1}^{r}n_{i,\alpha}\cdot \epsilon^{\sigma}_{\alpha,\beta}\right)\\
    &=  \sum_{i=1}^l \left(  \sum_{\alpha=1}^{r_{i}}\sum_{\beta=r_i+1}^{r}n_{i, \alpha}\cdot \epsilon^{\sigma}_{\alpha,\beta} -  \sum_{\alpha=1}^{r_{i+1}}\sum_{\beta=r_i+1}^{r} n_{i+1, \alpha}\cdot \epsilon^{\sigma}_{\alpha,\beta} \right)\\
    &= \sum_{i=1}^l\left( \sum_{\alpha=1}^{r_i}\sum_{\beta=r_i+1}^{r_{i-1}}n_{i, \alpha}\cdot \epsilon^{\sigma}_{\alpha,\beta} +\sum_{\alpha=1}^{r_{i}}\sum_{\beta=r_{i-1}+1}^{r}n_{i, \alpha}\cdot \epsilon^{\sigma}_{\alpha,\beta}  -  \sum_{\alpha=1}^{r_{i+1}}\sum_{\beta=r_i+1}^{r} n_{i+1, \alpha}\cdot \epsilon^{\sigma}_{\alpha,\beta} \right)\\
    &= \sum_{i=1}^l\left( \sum_{\alpha=1}^{r_i}\sum_{\beta=r_i+1}^{r_{i-1}}n_{i, \alpha}\cdot \epsilon^{\sigma}_{\alpha,\beta} \right).
\end{split}
\end{equation}
Therefore by plugging \eqref{eqn: trick sulle differenze} into \eqref{eqn: tvir conto dim} and suitably relabelling $\alpha$ and $\beta$ in the first summand, we obtain
\begin{multline*}
     \rk T_z^{ +}=\sum_{i=1}^l\left(\sum_{\alpha=1}^{r_i}n_{i, \alpha}\cdot \left( \sum_{\beta=r_{i+1}+1}^{r_i} \epsilon^{\sigma}_{\beta, \alpha} + \sum_{\beta=r_i+1}^{r_{i-1}}\epsilon^{\sigma}_{\alpha,\beta}\right) \right.\\
     \left.+ \sum_{\alpha=r_{i+1}+1}^{r_{i}}\sum_{\beta=r_i+1}^{r} ( \deg L_{\sigma(\beta)}-\deg L_{\sigma(\alpha)}+ 1-g )\cdot \epsilon^{\sigma}_{\alpha,\beta}  \vphantom{}\right),
\end{multline*}
which concludes the argument.
\end{proof}

Specialising in Proposition \ref{prop: BB for Quot} the rank of all the quotients to be $0$, i.e.~$r_1=\dots =r_l=r$ and $r_{l+1}=0$, we recover the decomposition computed in  \cite[Prop. 3.4]{MR_nested_Quot}.

\subsection{Motivic partition function}
Let $C$ be a smooth projective curve, $E$ a locally free sheaf of rank $r$ and fix $\ns=(s_1\leqslant \dots \leqslant s_l)$ . Define
\[
\mathsf{Z}_{C,E, \ns}(\nq)=\sum_{\nd}\,\bigl[\Quot_C(E, \ns, \nd)\bigr]\boldit{q}^{\nd}\in K_0(\Var_{\bfk})(\!( q_1, \dots, q_l)\!),
\]
where $\nd=(d_1, \dots, d_l)$ and we use the multivariable notation $\nq = (q_1,\ldots,q_l)$ and $\nq^{\nd}=\prod_{i=1}^l q^{d_i}$. If $l=1$, $E=L$ is a line bundle and $\ns=0$, then $\mathsf{Z}_{C,L,0}(q)$ is simply Kapranov's motivic zeta function \cite{Kapranov_rational_zeta}
\begin{equation}\label{eqn:kapranov}
\mathsf{Z}_{C,L,0}(q)=\zeta_C(q)=\sum_{d\geqslant 0}\,\bigl[C^{(d)}\bigr]q^d.
\end{equation}
We prove an  expression for the motive of the hyperquot scheme, in terms of Kapranov motivic zeta functions.
\begin{theorem}\label{thm: theorem motivic all g}
Let $C$ be a smooth projective curve of genus $g$ and $E=\bigoplus_{\alpha=1}^rL_\alpha$ such that the hyperquot scheme $ \Quot_C(E, \ns, \nd)$ is smooth and unobstructed. There is an identity
\begin{multline*}
\mathsf{Z}_{C,E, \ns}(\nq)=\sum_{\sigma\in P_{ \nr}}\prod_{j=1}^{l}\left(\BL^{h^\sigma_{2}(j,g, \deg L_1,\dots,  \deg L_r)} q_j^{ \sum_{\alpha=r_j+1}^{r}\deg L_{\sigma(\alpha)}}\right) \\
\cdot \prod_{1\leqslant i\leqslant j\leqslant l}\prod_{\alpha=r_{j+1}+1}^{r_j}\zeta_C\left(\BL^{ h^\sigma_0(i,\alpha)+r_i-r_j+\alpha-r_{j+1}-1}q_i\cdots q_j \right).
\end{multline*}
In particular, $\mathsf{Z}_{C,E, \ns}(\nq)$ is a rational function.
\end{theorem}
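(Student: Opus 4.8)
The plan is to feed the Białynicki--Birula stratification of \Cref{prop: BB for Quot} into the scissor relations of $K_0(\Var_\bfk)$. Under the standing smoothness and unobstructedness hypothesis, $\Quot_C(E,\ns,\nd)$ is smooth and projective, so \Cref{thm: BB} applies; since each stratum $Q^+_{\sigma,\nn}\to Q_{\sigma,\nn}$ is a Zariski-locally trivial affine fibre bundle of the relative dimension $d(\sigma,\nn)=\sum_{i=1}^l\sum_{\alpha=1}^{r_i}n_{i,\alpha}h^\sigma_1(i,\alpha)+\sum_{i=1}^lh^\sigma_2(i,g,\deg L_1,\dots,\deg L_r)$ recorded in \Cref{prop: BB for Quot}, rule (2) of the Grothendieck ring gives
\[
\bigl[\Quot_C(E,\ns,\nd)\bigr]=\sum_{\sigma\in P_{\nr}}\sum_{\nn}\BL^{d(\sigma,\nn)}\prod_{\alpha=1}^{r_1}\bigl[C^{[\nn_\alpha]}\bigr],
\]
the inner sum running over the admissible tuples $\nn=(\nn_\alpha)_\alpha$ with $\nd(\sigma,\nn)=\nd$. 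First I would multiply by $\nq^{\nd}$ and sum over all $\nd$: since $\nd$ is determined by $(\sigma,\nn)$, this merely removes the constraint, leaving $\mathsf{Z}_{C,E,\ns}(\nq)=\sum_\sigma\sum_\nn\nq^{\nd(\sigma,\nn)}\BL^{d(\sigma,\nn)}\prod_\alpha[C^{[\nn_\alpha]}]$. Then I would isolate the purely $\sigma$-dependent prefactor, namely the $\BL^{h^\sigma_2(j,\dots)}$ factors together with the factor $q_j^{\sum_{\alpha=r_j+1}^{r}\deg L_{\sigma(\alpha)}}$ coming from the degree identity $d_j=\sum_{\alpha\leqslant r_j}n_{j,\alpha}+\sum_{\alpha>r_j}\deg L_{\sigma(\alpha)}$ of \Cref{sec: comb fixed}, from the part still depending on $\nn$.

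The residual sum over $\nn=(\nn_\alpha)_\alpha$ then factors as a product over $\alpha=1,\dots,r_1$, because the exponents of the $q_j$, the exponent of $\BL$, and the class $[C^{[\nn_\alpha]}]$ all split along $\alpha$. For $\alpha$ in the block $r_{j+1}+1\leqslant\alpha\leqslant r_j$ one has $\nn_\alpha=(0\leqslant n_{1,\alpha}\leqslant\cdots\leqslant n_{j,\alpha})$, and I would use the isomorphism $C^{[\nn_\alpha]}\cong\prod_{i=1}^jC^{(n_{i,\alpha}-n_{i-1,\alpha})}$ (with $n_{0,\alpha}=0$) recalled in \Cref{sec: l geq 1}. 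Passing to the difference variables $m_i=n_{i,\alpha}-n_{i-1,\alpha}\geqslant 0$, which now range freely, and regrouping exponents so that $m_k$ carries $q_k\cdots q_j$ and $\BL$ to the power $\sum_{i=k}^jh^\sigma_1(i,\alpha)$, the contribution of the block $\alpha$ collapses to $\prod_{i=1}^j\zeta_C\bigl(\BL^{\sum_{m=i}^jh^\sigma_1(m,\alpha)}q_i\cdots q_j\bigr)$, where $\zeta_C(t)=\sum_{n\geqslant 0}[C^{(n)}]t^n$ is Kapranov's motivic zeta function, cf.~\eqref{eqn:kapranov}.

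The remaining, and main, step is the purely combinatorial identity
\[
\sum_{m=i}^jh^\sigma_1(m,\alpha)=h^\sigma_0(i,\alpha)+r_i-r_j+\alpha-r_{j+1}-1,\qquad r_{j+1}+1\leqslant\alpha\leqslant r_j,\ \ i\leqslant j,
\]
after which the claimed formula follows by reindexing $\prod_{j=1}^l\prod_{i=1}^j=\prod_{1\leqslant i\leqslant j\leqslant l}$ and recombining with the $\sigma$-prefactor $\prod_{j=1}^l\BL^{h^\sigma_2(j,g,\deg L_1,\dots,\deg L_r)}q_j^{\sum_{\alpha=r_j+1}^r\deg L_{\sigma(\alpha)}}$. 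To prove the identity I would telescope the two index ranges appearing in the definition of $h^\sigma_1$: summing $m$ from $i$ to $j$ yields $\bigcup_{m=i}^j[r_{m+1}+1,r_m]=[r_{j+1}+1,r_i]$ and $\bigcup_{m=i}^j[r_m+1,r_{m-1}]=[r_j+1,r_{i-1}]$, so that $\sum_{m=i}^jh^\sigma_1(m,\alpha)=\sum_{\beta=r_{j+1}+1}^{r_i}\epsilon^\sigma_{\beta,\alpha}+\sum_{\beta=r_j+1}^{r_{i-1}}\epsilon^\sigma_{\alpha,\beta}$; splitting off $h^\sigma_0(i,\alpha)=\sum_{\beta=r_i+1}^{r_{i-1}}\epsilon^\sigma_{\alpha,\beta}$ and pairing $\epsilon^\sigma_{\beta,\alpha}+\epsilon^\sigma_{\alpha,\beta}=1$ on the overlap $\beta\in[r_j+1,r_i]$ (where $\beta\neq\alpha$ since $\alpha\leqslant r_j$) reduces everything to the single block sum $\sum_{\beta=r_{j+1}+1}^{r_j}\epsilon^\sigma_{\beta,\alpha}$, which equals $\alpha-r_{j+1}-1$ because $\sigma$ is order-preserving on the block $[r_{j+1}+1,r_j]$ by definition of $P_{\nr}$. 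This use of the block-monotonicity of $\sigma$ is exactly what makes the count explicit, and keeping the bookkeeping of the nested index ranges straight is where I expect the only real friction. Finally, rationality is immediate: $P_{\nr}$ is finite and, by Kapranov's theorem \cite{Kapranov_rational_zeta}, each factor $\zeta_C(\BL^e q_i\cdots q_j)$ is a rational function, so $\mathsf{Z}_{C,E,\ns}(\nq)$ is a finite sum of products of rational functions.
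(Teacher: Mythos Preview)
Your proof is correct and follows essentially the same route as the paper: Białynicki--Birula stratification from \Cref{prop: BB for Quot}, scissor relations, factoring over the index $\alpha$, reducing each block to a product of motivic zeta functions, and then the combinatorial simplification of the exponent (which the paper isolates as \Cref{lemma: comb with h0}). The only substantive difference is that where the paper cites \cite[Thm.~4.2]{MR_nested_Quot} as a black box for the generating series $\sum_{\nn_\alpha}(\cdots)[C^{[\nn_\alpha]}]$, you instead derive it directly via the product decomposition $C^{[\nn_\alpha]}\cong\prod_iC^{(n_{i,\alpha}-n_{i-1,\alpha})}$ and the change to difference variables $m_i$---exactly the alternative the paper points out in its footnote.
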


\begin{proof}
By the assumptions,  to compute the motive of  $\Quot_C(E, \ns, \nd)$ we can exploit the stratification of Proposition \ref{prop: BB for Quot}.   Every stratum is a Zariski locally trivial fibration  over a connected component of the fixed locus, with fibre an affine space whose dimension we computed in \Cref{prop: BB for Quot}. Therefore we have
\begin{align*}
     \mathsf{Z}_{C,E,\ns}(\nq)&=
     \sum_{\nd}\nq^{\nd}\sum_{\sigma\in P_{ \nr}}\sum_{\substack{\nn=(\nn_\alpha)_\alpha, \\ d_j=\sum_{\alpha=1}^{r_j} n_{j, \alpha}+\\ \sum_{\alpha=r_j+1}^{r}\deg L_{\sigma(\alpha)}} } \BL^{ \sum_{j=1}^l\sum_{\alpha=1}^{r_j}n_{j, \alpha}\cdot h^\sigma_1(j, \alpha)+ \sum_{j=1}^l h^\sigma_{2}(j, g, \deg L_1,\dots,  \deg L_r)}\cdot \prod_{\alpha=1}^{r_1}\left[C^{[\nn_\alpha]}\right]\\
     &=\sum_{\nd}\sum_{\sigma\in P_{ \nr}}\prod_{j=1}^{l}\left(\BL^{h^\sigma_{2}(j,g, \deg L_1,\dots,  \deg L_r)} q_j^{ \sum_{\alpha=r_j+1}^{r}\deg L_{\sigma(\alpha)}}\right)\\
     &\qquad\qquad\qquad \cdot\sum_{\substack{\nn=(\nn_\alpha)_\alpha, \\ d_j=\sum_{\alpha=1}^{r_j} n_{j, \alpha}+\\\sum_{\alpha=r_j+1}^{r}\deg L_{\sigma(\alpha)}} }\prod_{j=1}^{l}\left(\BL^{\sum_{\alpha=1}^{r_j}n_{j, \alpha}\cdot h^\sigma_1(j, \alpha)}q_j^{\sum_{\alpha=1}^{r_j} n_{j, \alpha}}\right) \prod_{\alpha=1}^{r_1}\left[C^{[\nn_\alpha]}\right]\\
     &=\sum_{\sigma\in P_{ \nr}}\prod_{j=1}^{l}\left(\BL^{h^\sigma_{2}(j,g, \deg L_1,\dots,  \deg L_r)} q_j^{ \sum_{\alpha=r_j+1}^{r}\deg L_{\sigma(\alpha)}}\right)\\
     &\qquad\qquad\qquad\cdot\sum_{\substack{\nn=(\nn_\alpha)_\alpha} }\prod_{j=1}^{l}\left(\BL^{\sum_{\alpha=1}^{r_j}n_{j, \alpha}\cdot h^\sigma_1(j, \alpha)}q_j^{\sum_{\alpha=1}^{r_j} n_{j, \alpha}}\right) \prod_{\alpha=1}^{r_1}\left[C^{[\nn_\alpha]}\right].
\end{align*}
We redistribute the variables to simplify the above expression. We have
\[
\prod_{j=1}^{l}q_j^{\sum_{\alpha=1}^{r_j} n_{j, \alpha}}=\prod_{j=1}^{l}\prod_{\alpha=r_{j+1}+1}^{r_j}q_1^{n_{1,\alpha}}\cdots q_j^{n_{j,\alpha}}.
\]
Similarly, we have
\[
\prod_{j=1}^{l}\BL^{\sum_{\alpha=1}^{r_j}n_{j, \alpha}\cdot h^\sigma_1(j, \alpha)}=\prod_{j=1}^{l}\prod_{\alpha=r_{j+1}+1}^{r_j} \BL^{n_{1, \alpha}\cdot h^\sigma_1(1, \alpha)}\cdots  \BL^{n_{j, \alpha}\cdot h^\sigma_1(j, \alpha)}
\]
and 
\[
\prod_{\alpha=1}^{r_1}\left[C^{[\nn_\alpha]}\right]=\prod_{j=1}^l \prod_{\alpha=r_{j+1}+1}^{r_j}\left[C^{[\nn_\alpha]}\right].
\]
Therefore we have
\begin{align*}
      \mathsf{Z}_{C,E,\ns}(\nq)
      &=
      \sum_{\sigma\in P_{ \nr}}\prod_{j=1}^{l}\left(\BL^{h^\sigma_{2}(j,g, \deg L_1,\dots,  \deg L_r)} q_j^{ \sum_{\alpha=r_j+1}^{r}\deg L_{\sigma(\alpha)}}\right)\\
      &\qquad\qquad\cdot \sum_{\substack{\nn=(\nn_\alpha)_\alpha} }\prod_{j=1}^{l} \prod_{\alpha=r_{j+1}+1}^{r_j}\left((\BL^{h^\sigma_1(1, \alpha)}q_1)^{n_{1, \alpha}} \cdots (\BL^{h^\sigma_1(j, \alpha)}q_j)^{n_{j,\alpha}} \left[C^{[\nn_\alpha]}\right] \right)\\
      &=\sum_{\sigma\in P_{ \nr}}\prod_{j=1}^{l}\left(\BL^{h^\sigma_{2}(j,g, \deg L_1,\dots,  \deg L_r)} q_j^{ \sum_{\alpha=r_j+1}^{r}\deg L_{\sigma(\alpha)}}\right)\\
      &\qquad\qquad\cdot\prod_{j=1}^{l} \prod_{\alpha=r_{j+1}+1}^{r_j}\sum_{\nn_\alpha}(\BL^{h^\sigma_1(1, \alpha)}q_1)^{n_{1, \alpha}} \cdots (\BL^{h^\sigma_1(j, \alpha)}q_j)^{n_{j,\alpha}} \left[C^{[\nn_\alpha]}\right].
\end{align*}
Notice now that the last sum is a generating series of motives of hyperquot schemes $\Quot_C(\OO_C, \mathbf{0}, \nn_\alpha)$, where the rank of all the quotients is $0$.
The associated  generating series was computed in \cite[Thm. 4.2]{MR_nested_Quot}\footnote{Alternatively one could directly see the factorisation exploiting the fact that the nested Hilbert scheme on a smooth projective curve is a product of symmetric products.} and equals
\begin{align*}
\sum_{\nn_\alpha}\,(\BL^{h^\sigma_1(1, \alpha)}q_1)^{n_{1, \alpha}} \cdots (\BL^{h^\sigma_1(j, \alpha)}q_j)^{n_{j,\alpha}} \left[C^{[\nn_\alpha]}\right]=\prod_{i=1}^j\zeta_C\left(\BL^{\sum_{k=i}^j h^\sigma_1(k, \alpha)}q_i\cdots q_j \right).    
\end{align*}
We conclude that 
\begin{align*}
     \mathsf{Z}_{C,E, \ns}(\nq)&=\sum_{\sigma\in P_{ \nr}}\prod_{j=1}^{l}\left(\BL^{h^\sigma_{2}(j,g, \deg L_1,\dots,  \deg L_r)} q_j^{ \sum_{\alpha=r_j+1}^{r}\deg L_{\sigma(\alpha)}}\right) \\
     &\qquad \qquad\qquad\cdot\prod_{j=1}^{l}\prod_{\alpha=r_{j+1}+1}^{r_j}\prod_{i=1}^j\zeta_C\left(\BL^{\sum_{k=i}^j h^\sigma_1(k, \alpha)}q_i\cdots q_j \right)\\
     &=\sum_{\sigma\in P_{ \nr}}\prod_{j=1}^{l}\left(\BL^{h^\sigma_{2}(j,g, \deg L_1,\dots,  \deg L_r)} q_j^{ \sum_{\alpha=r_j+1}^{r}\deg L_{\sigma(\alpha)}}\right)\\
     &\qquad\qquad \qquad\cdot\prod_{1\leqslant i\leqslant j\leqslant l}\prod_{\alpha=r_{j+1}+1}^{r_j}\zeta_C\left(\BL^{ h^\sigma_0(i,\alpha)+r_i-r_j+\alpha-r_{j+1}-1}q_i\cdots q_j \right),
\end{align*}
where in the last equality we used Lemma \ref{lemma: comb with h0}. Rationality of $   \mathsf{Z}_{C,E, \ns}(\nq)$ follows from the rationality of $\zeta_C$ proved in \cite{Kapranov_rational_zeta}.
\end{proof}

Once more, specialising in Theorem \ref{thm: theorem motivic all g} the ranks of all the quotients to be $0$, i.e.~setting $r_1=\dots =r_l=r$ and $r_{l+1}=0$, one recovers the product formula in \cite[Thm. 4.2]{MR_nested_Quot} which, in turn, generalises the corresponding computation in the unnested case \cite{BFP_motives,Ric_motive_quot_locally_free}. 

We prove now the combinatorial result we used in the proof of Theorem \ref{thm: theorem motivic all g}.

\begin{lemma}\label{lemma: comb with h0}
    Let $1\leqslant i\leqslant j\leqslant r$ and $r_{j+1}+1\leqslant \alpha\leqslant r_j$. We have
    \[
    \sum_{k=i}^j h^\sigma_1(k, \alpha)= h^\sigma_0(i,\alpha)+r_i-r_j+\alpha-r_{j+1}-1.
    \]
\end{lemma}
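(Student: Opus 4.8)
The plan is to unfold the left-hand side via the definition of $h^\sigma_1$ in \eqref{eqn:h_0,h_2}, telescope the resulting index ranges, isolate $h^\sigma_0(i,\alpha)$, and then reduce what remains to a count inside a single block of the permutation $\sigma\in P_{\nr}$.

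First I would write
\[
\sum_{k=i}^j h^\sigma_1(k,\alpha)=\sum_{k=i}^j\sum_{\beta=r_{k+1}+1}^{r_k}\epsilon^\sigma_{\beta,\alpha}+\sum_{k=i}^j\sum_{\beta=r_k+1}^{r_{k-1}}\epsilon^\sigma_{\alpha,\beta}.
\]
Since $r_0\geqslant r_1\geqslant\cdots\geqslant r_{l+1}$, the integer intervals $\{r_{k+1}+1,\dots,r_k\}$ for $k=i,\dots,j$ are consecutive (some possibly empty), with union $\{r_{j+1}+1,\dots,r_i\}$; likewise $\bigcup_{k=i}^j\{r_k+1,\dots,r_{k-1}\}=\{r_j+1,\dots,r_{i-1}\}$. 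Hence the two double sums collapse to
\[
\sum_{k=i}^j h^\sigma_1(k,\alpha)=\sum_{\beta=r_{j+1}+1}^{r_i}\epsilon^\sigma_{\beta,\alpha}+\sum_{\beta=r_j+1}^{r_{i-1}}\epsilon^\sigma_{\alpha,\beta},
\]
and splitting the second sum at $r_i$ peels off exactly $h^\sigma_0(i,\alpha)=\sum_{\beta=r_i+1}^{r_{i-1}}\epsilon^\sigma_{\alpha,\beta}$.

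Next I would combine the two remaining pieces. For every $\beta$ with $r_j+1\leqslant\beta\leqslant r_i$ one has $\beta>r_j\geqslant\alpha$, so $\beta\neq\alpha$ and, $\sigma$ being a bijection, $\epsilon^\sigma_{\alpha,\beta}+\epsilon^\sigma_{\beta,\alpha}=1$; using this to replace $\sum_{\beta=r_j+1}^{r_i}\epsilon^\sigma_{\alpha,\beta}$ by $(r_i-r_j)-\sum_{\beta=r_j+1}^{r_i}\epsilon^\sigma_{\beta,\alpha}$, the $\epsilon^\sigma_{\beta,\alpha}$-terms over $\{r_j+1,\dots,r_i\}$ cancel, leaving
\[
\sum_{k=i}^j h^\sigma_1(k,\alpha)=h^\sigma_0(i,\alpha)+(r_i-r_j)+\sum_{\beta=r_{j+1}+1}^{r_j}\epsilon^\sigma_{\beta,\alpha}.
\]
Finally, $\{r_{j+1}+1,\dots,r_j\}$ is precisely one of the blocks on which $\sigma\in P_{\nr}$ is increasing, and $\alpha$ belongs to it by hypothesis; therefore $\epsilon^\sigma_{\beta,\alpha}=1$ exactly when $\beta<\alpha$, so $\sum_{\beta=r_{j+1}+1}^{r_j}\epsilon^\sigma_{\beta,\alpha}=\alpha-r_{j+1}-1$, and substituting yields the asserted identity.

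Nothing here is deep: the only thing demanding care is the bookkeeping of the half-open index ranges, including the degenerate cases where consecutive $r_k$ coincide, and checking that $\alpha\in\{r_{j+1}+1,\dots,r_j\}$ while $\alpha\notin\{r_j+1,\dots,r_i\}$ — which is exactly what legitimises the cancellation in the penultimate display. That index juggling, together with invoking the monotonicity built into the definition of $P_{\nr}$, is the main (and only) obstacle.
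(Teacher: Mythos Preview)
Your proof is correct and follows essentially the same approach as the paper: both arguments expand the definition of $h^\sigma_1$, use $\epsilon^\sigma_{\alpha,\beta}+\epsilon^\sigma_{\beta,\alpha}=1$ for $\alpha\neq\beta$ to produce the $r_i-r_j$ term, isolate $h^\sigma_0(i,\alpha)$ as a leftover, and evaluate the residual block sum $\sum_{\beta=r_{j+1}+1}^{r_j}\epsilon^\sigma_{\beta,\alpha}$ via the monotonicity of $\sigma\in P_{\nr}$. The only cosmetic difference is that the paper pairs the $\epsilon^\sigma_{\alpha,\beta}$ and $\epsilon^\sigma_{\beta,\alpha}$ terms block by block before summing, whereas you telescope each family of sums first and then cancel---arguably a cleaner bookkeeping, but the same argument.
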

\begin{proof}
    By definition, we have
\begin{align*}
\sum_{k=i}^j h^\sigma_1(k, \alpha)&=  \sum_{k=i}^j \left(\sum_{\beta=r_{k+1}+1}^{r_k} \epsilon^{\sigma}_{\beta, \alpha} + \sum_{\beta=r_k+1}^{r_{k-1}}\epsilon^{\sigma}_{\alpha,\beta}\right)\\
&=\sum_{\beta=r_{i+1}+1}^{r_i} \epsilon^{\sigma}_{\beta, \alpha} + \sum_{\beta=r_i+1}^{r_{i-1}}\epsilon^{\sigma}_{\alpha,\beta}+\sum_{\beta=r_{i+2}+1}^{r_{i+1}} \epsilon^{\sigma}_{\beta, \alpha} \\
&\qquad\qquad\qquad+ \sum_{\beta=r_{i+1}+1}^{r_{i}}\epsilon^{\sigma}_{\alpha,\beta}+\dots +\sum_{\beta=r_{j+1}+1}^{r_j} \epsilon^{\sigma}_{\beta, \alpha} + \sum_{\beta=r_j+1}^{r_{j-1}}\epsilon^{\sigma}_{\alpha,\beta}\\
        &=\sum_{k=i}^{j-1}\sum_{\beta=r_{k+1}+1}^{r_{k}}(\epsilon^{\sigma}_{\alpha,\beta}+\epsilon^{\sigma}_{\beta,\alpha})+ \sum_{\beta=r_{i}+1}^{r_i-1} \epsilon^{\sigma}_{\alpha, \beta}+\sum_{\beta=r_{j+1}+1}^{r_j} \epsilon^{\sigma}_{\beta, \alpha}.
    \end{align*}
    Notice that if $\alpha\neq \beta$, we have $\epsilon^{\sigma}_{\alpha,\beta}+\epsilon^{\sigma}_{\beta,\alpha}=1$. Therefore
    \begin{align*}
 \sum_{k=i}^{j-1}\sum_{\beta=r_{k+1}+1}^{r_{k}}(\epsilon^{\sigma}_{\alpha,\beta}+\epsilon^{\sigma}_{\beta,\alpha})= \sum_{k=i}^{j-1}\sum_{\beta=r_{k+1}+1}^{r_{k}}1=r_i-r_j.
    \end{align*}
    Secondly, we have that
    \begin{align*}
        \sum_{\beta=r_{j+1}+1}^{r_j} \epsilon^{\sigma}_{\beta, \alpha}=\alpha-r_{j+1}-1.
    \end{align*}
    We conclude that
\[
\sum_{k=i}^j h^\sigma_1(k, \alpha)=h^\sigma_0(i,\alpha)+r_i-r_j+\alpha-r_{j+1}-1.\qedhere
\]
\end{proof}

\subsection{Genus 0}
In the case where the curve $C\cong \BP^1$ has genus 0 and $E=\OO_{\BP^1}^{\oplus r}$, we prove a different closed expression for the motive of the hyperquot scheme, where the role of the partial flag variety becomes more evident. We denote by  $\Flag(\ns,r)$   the partial flag variety parametrising flags of vector subspaces
\[
V_l\subset \dots \subset V_1 \subset V,
\]
of a fixed $r$-dimensional vector space $V$, where we set $\dim V_i =r_i=r-s_i$. Setting $r_0=r$ and $r_{l+1}=0$ as ever, an easy inductive argument yields the relation
\begin{align*}
    [\Flag(\ns,r)] = \frac{\prod_{k=1}^{r}\,\left(\BL^k-1\right)}{\prod_{j=0}^{l}\prod_{k=1}^{r_j-r_{j+1}}\,\left(\BL^k-1\right)}\in K_0(\Var_\bfk).
\end{align*}
The inductive argument uses the base case
\[
[G(d,r)]=\frac{\prod_{k=1}^{r}\,\left(\BL^k-1\right)}{\prod_{k=1}^{d}\,\left(\BL^k-1\right)\prod_{k=1}^{r-d}\,\left(\BL^k-1\right)}
\]
and the fibration
\[
\begin{tikzcd}
\Flag(r,s_1,\ldots,s_{l-1},s_l) \arrow{r} & \Flag(r,s_1,\ldots,s_{l-1})
\end{tikzcd}
\]
forgetting $V_l$, which has fibre the Grassmannian $G(r_l,r_{l-1})$.
\begin{theorem}\label{thm: fact genus 0}
There is an identity
\[
\mathsf{Z}_{\BP^1,\OO^{\oplus r}, \ns}(\nq)=  [\Flag(\ns,r)] \prod_{1\leqslant i\leqslant j \leqslant l}\prod_{\alpha=r_{j+1}+1}^{r_j}\frac{1}{\left(1-\BL^{r_i-\alpha} q_i\cdots q_j \right)\left(1-\BL^{r_{i-1}-\alpha+1} q_i\cdots q_j \right)}.
\]
\end{theorem}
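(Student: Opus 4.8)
The plan is to obtain the formula by specialising \Cref{thm: theorem motivic all g}, which applies to $(\BP^1,\OO^{\oplus r})$ because that hyperquot scheme is smooth and unobstructed by \Cref{prop: smooth nested flag} (take $L=\OO_{\BP^1}$, $n=r$, $m=0$). First I would set $g=0$ and $L_\alpha=\OO_{\BP^1}$ for every $\alpha$, so that all $\deg L_\alpha$ vanish: the monomials $q_j^{\sum_{\alpha=r_j+1}^{r}\deg L_{\sigma(\alpha)}}$ become $1$, the exponent $h^\sigma_2(j,0,\dots,0)$ collapses to $\sum_{\alpha=r_{j+1}+1}^{r_j}\sum_{\beta=r_j+1}^{r}\epsilon^\sigma_{\alpha,\beta}$, and, since $(\BP^1)^{(n)}=\BP^n$, Kapranov's zeta function \eqref{eqn:kapranov} becomes $\zeta_{\BP^1}(t)=\sum_{n\ge0}\frac{\BL^{n+1}-1}{\BL-1}t^n=\frac{1}{(1-t)(1-\BL t)}$. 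After these substitutions the assertion becomes a closed identity between rational functions with coefficients in $\BZ[\BL]$, and no geometry is left.

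Two reductions turn this identity into recognisable combinatorics. First, a direct count gives $\sum_{j=1}^{l}h^\sigma_2(j,0,\dots,0)=N-\ell(\sigma)$, where $\ell(\sigma)=\#\{\alpha<\beta:\sigma(\alpha)>\sigma(\beta)\}$ is the inversion number of $\sigma$ and $N=\sum_{0\le k<j\le l}(r_k-r_{k+1})(r_j-r_{j+1})$: for $\sigma\in P_{\nr}$ every inversion is inter-block, and for a fixed pair of blocks each ordered pair of entries contributes either to $h^\sigma_2$ or to an inversion, but not to both. Second, $[\Flag(\ns,r)]$ equals the Gaussian multinomial, which in turn is $\sum_{\sigma\in P_{\nr}}\BL^{\ell(\sigma)}$ because the elements of $P_{\nr}$ are exactly the minimal-length representatives of $S_r$ modulo the Young subgroup $S_{r_0-r_1}\times\cdots\times S_{r_l-r_{l+1}}$. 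Using also \Cref{lemma: comb with h0} to write the $\zeta$-exponent as $E^\sigma_{i,j,\alpha}:=h^\sigma_0(i,\alpha)+r_i-r_j+\alpha-r_{j+1}-1=\sum_{k=i}^{j}h^\sigma_1(k,\alpha)$, the claim reduces to
\begin{align*}
&\sum_{\sigma\in P_{\nr}}\BL^{N-\ell(\sigma)}\prod_{1\le i\le j\le l}\ \prod_{\alpha=r_{j+1}+1}^{r_j}\frac{1}{\left(1-\BL^{E^\sigma_{i,j,\alpha}}q_i\cdots q_j\right)\left(1-\BL^{E^\sigma_{i,j,\alpha}+1}q_i\cdots q_j\right)}\\
&\qquad=\left(\sum_{\sigma\in P_{\nr}}\BL^{\ell(\sigma)}\right)\prod_{1\le i\le j\le l}\ \prod_{\alpha=r_{j+1}+1}^{r_j}\frac{1}{\left(1-\BL^{r_i-\alpha}q_i\cdots q_j\right)\left(1-\BL^{r_{i-1}-\alpha+1}q_i\cdots q_j\right)}.
\end{align*}

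I would prove this identity by induction on $l$. The base case $l=1$ is a Grassmannian identity: via the bijection $\sigma\leftrightarrow\{\sigma(1)<\cdots<\sigma(r_1)\}$ one has $h^\sigma_0(1,\alpha)=r-r_1+\alpha-\sigma(\alpha)$, and the identity becomes
\[
\sum_{\sigma\in P_{r_1}}\BL^{\sum_{\alpha}h^\sigma_0(1,\alpha)}\prod_{\alpha=1}^{r_1}\frac{1}{\left(1-\BL^{h^\sigma_0(1,\alpha)+\alpha-1}q\right)\left(1-\BL^{h^\sigma_0(1,\alpha)+\alpha}q\right)}=[G(r_1,r)]\prod_{k=0}^{r_1-1}\frac{1}{1-\BL^{k}q}\prod_{k=s_1+1}^{r}\frac{1}{1-\BL^{k}q},
\]
which I would settle by induction on $r$, grouping permutations by the value of $\sigma(r_1)$ and using $[G(r_1,r)]=[G(r_1-1,r-1)]+\BL^{r_1}[G(r_1,r-1)]$; the inductive step is a partial-fraction telescoping in which adjacent factors $1-\BL^{m}q$ cancel — already visible in the smallest case $r=2$, $r_1=1$, where the left side is $\frac{\BL}{(1-\BL q)(1-\BL^2q)}+\frac{1}{(1-q)(1-\BL q)}=\frac{\BL+1}{(1-q)(1-\BL^2q)}$. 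For the step $l-1\to l$, I would use, on the geometric side, the projection $\Flag(\ns,r)\to\Flag((s_1,\dots,s_{l-1}),r)$ with fibre $G(r_l,r_{l-1})$ (the fibration used just before the statement), matched with the map $P_{\nr}\to P_{(r_1,\dots,r_{l-1})}$ that merges the two innermost blocks; its fibre records which $r_l$ of the $r_{l-1}$ entries of the merged block drop into the new innermost block, contributes $[G(r_l,r_{l-1})]$ to the prefactor sum, and should supply exactly the extra summation needed to produce the $j=l$ factors via the base-case telescoping.

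The hard part is this combinatorial identity: concretely, the partial-fraction cancellations in the base case, and, for the inductive step, the bookkeeping of how $E^\sigma_{i,j,\alpha}$ varies over the fibres of $P_{\nr}\to P_{(r_1,\dots,r_{l-1})}$. The subtlety is that not only the $j=l$ exponents but also the $j=l-1$ exponents $E^\sigma_{i,l-1,\alpha}$ with $\alpha$ in the $(l-1)$-st block depend on the splitting of the merged block, while simultaneously the relevant variable jumps from $q_i\cdots q_{l-1}$ to $q_i\cdots q_l$; so one must regroup the $j=l-1$ and $j=l$ factors together and check that the fibre sum produces precisely $\bigl(1-\BL^{r_i-\alpha}q_i\cdots q_j\bigr)^{-1}\bigl(1-\BL^{r_{i-1}-\alpha+1}q_i\cdots q_j\bigr)^{-1}$ over the correct ranges. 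Everything else — the specialisation of \Cref{thm: theorem motivic all g} and the identification of the prefactor with $[\Flag(\ns,r)]$ — is routine.
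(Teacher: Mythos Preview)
Your reduction is exactly the paper's: specialise \Cref{thm: theorem motivic all g} to $(\BP^1,\OO^{\oplus r})$ using \Cref{prop: smooth nested flag}, insert $\zeta_{\BP^1}(t)=\frac{1}{(1-t)(1-\BL t)}$, and observe that what remains is a combinatorial identity over $\sigma\in P_{\nr}$. The paper stops there and simply cites \cite[Prop.~5]{Chen_hyperquot} for that identity; it does not prove it. Your computation $\sum_j h^\sigma_2(j,0,\dots,0)=N-\ell(\sigma)$ and the identification $[\Flag(\ns,r)]=\sum_{\sigma\in P_{\nr}}\BL^{\ell(\sigma)}$ are correct and make the target identity explicit, which is already more than the paper spells out.

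Where you diverge is in proposing to prove Chen's identity from scratch by double induction. The $l=1$ base case and its $r=2$ sanity check are fine, and the surjection $P_{\nr}\to P_{(r_1,\dots,r_{l-1})}$ is indeed the natural combinatorial shadow of the fibration $\Flag(\ns,r)\to\Flag((s_1,\dots,s_{l-1}),r)$. But your own caveat is exactly the issue: the exponents $E^\sigma_{i,l-1,\alpha}$ for $\alpha$ in the $(l-1)$-st block shift along the fibres, so the inductive step is not just the base-case telescoping applied once more --- it requires a simultaneous partial-fraction reorganisation across the two families of variables $q_i\cdots q_{l-1}$ and $q_i\cdots q_l$. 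You have located the difficulty but not resolved it; as written this is a plan, not a proof. If self-containment is the goal you must carry out that bookkeeping in full (Chen's own argument is of the same inductive flavour and would be a useful guide); otherwise, citing \cite[Prop.~5]{Chen_hyperquot} as the paper does is the honest move.
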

\begin{proof}
In the case $\BP^1$ and $E=\OO^{\oplus r}$ the hyperquot scheme is smooth and unobstructed by \Cref{prop: smooth nested flag}. Therefore 
by Theorem  \ref{thm: theorem motivic all g} we have
\begin{align*}
       \mathsf{Z}_{\BP^1,\OO^{\oplus r}, \ns}(\nq)=\sum_{\sigma\in P_{ \nr}}\BL^{\sum_{j=1}^{l}h^\sigma_{2}(j,0, 0,\dots,  0)}\prod_{1\leqslant i\leqslant j\leqslant l}\prod_{\alpha=r_{j+1}+1}^{r_j}\zeta_C\left(\BL^{ h^\sigma_0(i,\alpha)+r_i-r_j+\alpha-r_{j+1}-1}q_i\cdots q_j \right).
\end{align*}
Recall that the motivic zeta function of $\BP^1$ is (see e.g.~\cite{Kapranov_rational_zeta})
\begin{align*}
\zeta_{\BP^1}(q)=\frac{1}{(1-q)(1-\BL q)}.
\end{align*}
Thus we conclude that
\begin{multline*}
\mathsf{Z}_{\BP^1,\OO^{\oplus r}, \ns}(\nq)=\sum_{\sigma\in P_{ \nr}}\BL^{\sum_{j=1}^{l}\sum_{\alpha=r_{j+1}+1}^{r_{j}}\sum_{\beta=r_j+1}^{r}  \epsilon^{\sigma}_{\alpha,\beta}}\\
\cdot \prod_{1\leqslant i\leqslant j\leqslant l}\prod_{\alpha=r_{j+1}+1}^{r_j}\frac{1}{\left(1-\BL^{ h^\sigma_0(i,\alpha)+r_i-r_j+\alpha-r_{j+1}-1}q_i\cdots q_j \right)\left(1-\BL^{ h^\sigma_0(i,\alpha)+r_i-r_j+\alpha-r_{j+1}}q_i\cdots q_j \right)}\\
      = \frac{\prod_{k=1}^{r}\,\left(1-\BL^k\right)}{\prod_{j=0}^{l}\prod_{k=1}^{r_j-r_{j+1}}\,\left(1-\BL^k\right)}\prod_{1\leqslant i\leqslant j \leqslant l}\prod_{\alpha=r_{j+1}+1}^{r_j}\frac{1}{\left(1-\BL^{r_i-\alpha} q_i\cdots q_j \right)\left(1-\BL^{r_{i-1}-\alpha+1} q_i\cdots q_j \right)},
\end{multline*}
where in the last expression we used the purely combinatorial identity \cite[Prop. 5]{Chen_hyperquot}, suitably adapted to our conventions on the indices.
\end{proof}

\begin{remark}
It would be interesting to understand if the  product formula in Theorem \ref{thm: fact genus 0} could be geometrically explained by the existence of a fibration
\[
\begin{tikzcd}
\Quot_{\BP^1}(\OO^{\oplus r}, \ns, \nd)\arrow{r} &  \Flag(\ns,r).
\end{tikzcd}
\]

\end{remark}
\subsection{Irreducibility}
Assume $\bfk$ has characteristic $0$ in this subsection. Specialising the motivic class from  Theorem \ref{thm: theorem motivic all g} to the Poincaré polynomial and extracting the $0$-th Betti number gives a simple criterion to count the number of connected components of the hyperquot scheme, in the smooth and unobstructed case. We do this explicitly in the genus 0 case.
\begin{corollary}\label{cor:connectedness}
The hyperquot scheme $\Quot_{\BP^1}(\OO^{\oplus r}, \ns, \nd)$ is irreducible for all $\ns, \nd$.
\end{corollary}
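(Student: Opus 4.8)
The plan is to reduce irreducibility to connectedness, read off the zeroth Betti number from the motivic generating function, and carry out a short combinatorial analysis of the resulting rational function.

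First I would set $X:=\Quot_{\BP^1}(\OO^{\oplus r},\ns,\nd)$ and observe that $X$ is smooth and projective by \Cref{prop: smooth nested flag}. A smooth scheme is regular, hence locally integral, so its irreducible components coincide with its connected components; thus it suffices to prove that $X$ is connected whenever it is non-empty, i.e. that $b_0(X)\le 1$. (Accordingly the statement is to be read as ``$X$ is empty or irreducible''; non-emptiness will also fall out of the computation.) By \Cref{prop: BB for Quot} the variety $X$ admits an algebraic cell decomposition, its Białynicki--Birula strata being affine bundles over products of symmetric products $(\BP^1)^{(n)}=\BP^n$; hence its cohomology is of Tate type, $[X]\in\BZ[\BL]$, and $b_0(X)$ is the constant term of $[X]$, that is, $b_0(X)=[X]\big|_{\BL=0}$. (Equivalently $b_0(X)=E(X;0,0)$ and $E(-;0,0)\colon K_0(\Var_{\BC})\to\BZ$ is the ring map sending $\BL\mapsto 0$; one reduces to $\bfk=\BC$ since formation of the hyperquot scheme is compatible with field extension.) By \Cref{thm: fact genus 0}, $b_0(X)$ therefore equals the coefficient of $\nq^{\nd}$ in $\mathsf{Z}_{\BP^1,\OO^{\oplus r},\ns}(\nq)\big|_{\BL=0}$.

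Next I would evaluate that specialisation. Since $\Flag(\ns,r)$ is connected, $[\Flag(\ns,r)]\big|_{\BL=0}=1$. In the factor $\bigl(1-\BL^{r_i-\alpha}q_i\cdots q_j\bigr)^{-1}\bigl(1-\BL^{r_{i-1}-\alpha+1}q_i\cdots q_j\bigr)^{-1}$ of \Cref{thm: fact genus 0}, the indices satisfy $\alpha\le r_j\le r_i\le r_{i-1}$, so $r_{i-1}-\alpha+1\ge 1$ and the second half specialises to $1$, while $r_i-\alpha\ge 0$ vanishes exactly when $\alpha=r_i$, which --- given $r_{j+1}+1\le\alpha\le r_j$ --- forces $r_i=r_j$, $r_j>r_{j+1}$ and $\alpha=r_i$. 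Hence
\[
\mathsf{Z}_{\BP^1,\OO^{\oplus r},\ns}(\nq)\big|_{\BL=0}=\prod_{(i,j)\in\mathcal J}\frac{1}{1-q_i\cdots q_j},\qquad \mathcal J:=\bigl\{(i,j):1\le i\le j\le l,\ r_i=r_j>r_{j+1}\bigr\}.
\]
It then remains to show that every monomial coefficient of the right-hand side is $0$ or $1$. Let $j_1<\dots<j_p$ be the jump points ($r_{j_t}>r_{j_t+1}$, with $r_{l+1}=0$), and set $j_0:=0$; since $\nr$ is non-increasing, one has $(i,j)\in\mathcal J$ iff $j=j_t$ for some $t$ and $j_{t-1}<i\le j_t$, so $\mathcal J=\bigsqcup_{t=1}^p\{(i,j_t):j_{t-1}<i\le j_t\}$, and the monomial $q_i\cdots q_{j_t}$ uses only the variables indexed by the block $\{j_{t-1}+1,\dots,j_t\}$. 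The product therefore factors over these pairwise disjoint blocks, and on a single block $[a,b]=[j_{t-1}+1,j_t]$ the triangular monomial substitution $u_i:=q_iq_{i+1}\cdots q_b$ (invertible over $\BZ$: $q_b=u_b$ and $q_i=u_i/u_{i+1}$ for $i<b$) turns $\prod_{i=a}^b(1-q_i\cdots q_b)^{-1}$ into $\prod_{i=a}^b(1-u_i)^{-1}=\sum_{n_a,\dots,n_b\ge 0}\prod_i u_i^{n_i}$, all of whose coefficients are $1$; pulling back, each $\nq^{\nd}$ occurs with coefficient $0$ or $1$ (the only candidate expansion being $e_{[i,j_t]}=d_i-d_{i-1}$ with $d_0:=0$, which is legitimate precisely when $d_1\le\dots\le d_{j_p}$ and $d_k=0$ for $k>j_p$).

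Putting this together, $b_0(X)\in\{0,1\}$, so $X$ is empty or connected, hence --- being smooth --- empty or irreducible, as claimed. The one step that requires care is the combinatorial analysis: recognising that the intervals occurring in $\mathcal J$ fall into families with a common right endpoint, supported on pairwise disjoint blocks of $q$-variables, so that a single triangular change of variables trivialises the product on each block; everything else is formal manipulation of the formula in \Cref{thm: fact genus 0}.
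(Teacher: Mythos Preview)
Your approach is essentially the same as the paper's: specialise $\BL\to 0$ in \Cref{thm: fact genus 0} and read off the zeroth Betti number. The paper shortcuts the combinatorics by first assuming, without loss of generality (though the reduction is not spelled out), that the $r_i$ are strictly decreasing, so that your set $\mathcal J$ collapses to $\{(i,i):1\le i\le l\}$ and the specialisation becomes $\prod_i(1-q_i)^{-1}=\sum_{\nd\geqslant 0}\nq^{\nd}$ immediately. You instead treat the general case directly via the block decomposition and a triangular change of variables; this is a little more work but arguably cleaner, since the paper's reduction requires knowing how the hyperquot scheme (or at least its connectedness) behaves when repeated entries of~$\ns$ are removed.

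One small slip in your final parenthetical: the candidate exponents reset at each block boundary, so $e_{[j_{t-1}+1,\,j_t]}=d_{j_{t-1}+1}$ rather than $d_{j_{t-1}+1}-d_{j_{t-1}}$, and the non-emptiness condition is that the restriction of $\nd$ to each block is non-negative and non-decreasing (not that the full sequence $d_1\le\cdots\le d_{j_p}$ is non-decreasing across block boundaries). This does not affect the main conclusion that every coefficient is $0$ or $1$.
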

\begin{proof}
Without loss of generality, we can suppose that $r_{i}> r_j $ for all $0\leqslant  i< j\leqslant l+1$. Denote by $b_0$ the $0$-th Betti number. Its generating series is obtained by the specialisation  $\BL=0$
\begin{align*}
    \sum_{\nd}\,b_0\left(\Quot_{\BP^1}(\OO^{\oplus r}, \ns, \nd)\right)\boldit{q}^{\nd}&=\mathsf{Z}_{\BP^1,\OO^{\oplus r}, \ns}(\nq)|_{\BL=0}\\
    &= \prod_{i=1}^l\frac{1}{1- q_i }=\sum_{\nd}\nq^{\nd},
\end{align*}
  which implies that $ b_0(\Quot_{\BP^1}(\OO^{\oplus r}, \ns, \nd))=1$ and therefore $\Quot_{\BP^1}(\OO^{\oplus r}, \ns, \nd)$ has only one connected component.
\end{proof}
\subsection{Ordinary Quot scheme}
As a special case, we obtain new formulas for the generating series of motives of the classical Quot scheme.
\begin{corollary}
    Let $C$ be a smooth projective curve of genus $g$ and $E=\bigoplus_{\alpha=1}^rL_\alpha$ such that the Quot scheme $ \Quot_C(E, s, d)$ is smooth and unobstructed. Set $\Tilde{r}=r-s$. There is an identity
\begin{align*}
\mathsf{Z}_{C,E, s}(q)=\sum_{\sigma\in P_{ \Tilde{r}}}\BL^{h^\sigma_{2}(1,g, \deg L_1,\dots,  \deg L_r)} q^{ \sum_{\alpha=\Tilde{r}+1}^{r}\deg L_{\sigma(\alpha)}}
\cdot\prod_{\alpha=1}^{\Tilde{r}}\zeta_C\left(\BL^{ h^\sigma_0(i,\alpha)+\alpha-1}q \right).
\end{align*}
In particular, $\mathsf{Z}_{C,E, \ns}(\nq)$ is a rational function. Moreover there is an identity
\begin{align*}
     \mathsf{Z}_{\BP^1,\OO^{\oplus r}, s}(q)=  [G(s,r)] \cdot\prod_{\alpha=1}^{\Tilde{r}}\frac{1}{\left(1-\BL^{\Tilde{r}-\alpha} q\right)\left(1-\BL^{r-\alpha+1} q \right)}.
\end{align*}
\end{corollary}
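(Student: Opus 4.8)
The plan is to deduce both identities as the special case $l=1$ of \Cref{thm: theorem motivic all g} and \Cref{thm: fact genus 0}, so the only work is to unwind the combinatorial bookkeeping when there is a single nesting datum. With $l=1$ we have $\ns=(s)$, $\nd=(d)$, so that in the conventions of \Cref{sec: comb fixed} we get $r_0=r$, $r_1=\Tilde r=r-s$ and $r_2=0$, while $P_{\nr}=P_{(\Tilde r)}=P_{\Tilde r}$ is the set of permutations $\sigma\in S_r$ that are increasing on $\{1,\dots,\Tilde r\}$ and on $\{\Tilde r+1,\dots,r\}$. The outer product $\prod_{j=1}^{l}$ in \Cref{thm: theorem motivic all g} collapses to its $j=1$ factor, giving $\BL^{h^\sigma_2(1,g,\deg L_1,\dots,\deg L_r)}\,q^{\sum_{\alpha=\Tilde r+1}^{r}\deg L_{\sigma(\alpha)}}$, and the product $\prod_{1\le i\le j\le l}$ collapses to $i=j=1$; substituting $r_i=r_j=r_1=\Tilde r$ and $r_{j+1}=r_2=0$ into the exponent $h^\sigma_0(i,\alpha)+r_i-r_j+\alpha-r_{j+1}-1$ leaves $h^\sigma_0(1,\alpha)+\alpha-1$, which is precisely the shift appearing in the statement (here the stray index $i$ in the corollary should be read as $i=1$). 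This produces the first displayed identity, and rationality is then immediate from rationality of $\zeta_C$ \cite{Kapranov_rational_zeta}, exactly as at the end of the proof of \Cref{thm: theorem motivic all g}.

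For the genus $0$ formula I would specialise \Cref{thm: fact genus 0} to $l=1$ in the same way. The double product again reduces to its $i=j=1$ term $\prod_{\alpha=1}^{\Tilde r}\frac{1}{(1-\BL^{\Tilde r-\alpha}q)(1-\BL^{r-\alpha+1}q)}$, since $r_1-\alpha=\Tilde r-\alpha$ and $r_{0}-\alpha+1=r-\alpha+1$. The only point to record is the identification of the prefactor: $\Flag((s),r)$ parametrises a single subspace $V_1\subset V$ of dimension $r_1=\Tilde r$, hence $\Flag((s),r)=G(\Tilde r,V)$, and since $[G(d,r)]$ is symmetric under $d\leftrightarrow r-d$ — visible from the closed form $[G(d,r)]=\prod_{k=1}^r(\BL^k-1)\big/\bigl(\prod_{k=1}^d(\BL^k-1)\prod_{k=1}^{r-d}(\BL^k-1)\bigr)$ recalled before \Cref{thm: fact genus 0} — one may rewrite $[\Flag((s),r)]=[G(s,r)]$, as in the statement. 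To invoke \Cref{thm: fact genus 0} one needs $\Quot_{\BP^1}(\OO^{\oplus r},s,d)$ smooth and unobstructed, which is the $l=1$ instance of \Cref{prop: smooth nested flag} with $L=\OO$, $n=r$, $m=0$.

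I do not anticipate a genuine obstacle: the whole content is the substitution $l=1$ into results already established, together with the elementary Grassmannian duality $[G(d,r)]=[G(r-d,r)]$; alternatively one could re-run the proof of \Cref{prop: BB for Quot} and \Cref{thm: theorem motivic all g} directly in the unnested case, but this is strictly redundant. The only step warranting care is making the index conventions ($r_0,r_1,r_2$, the definition of $P_{\nr}$, the placement of the single index in $h^\sigma_0(\cdot,\alpha)$) line up cleanly between the general formulas and their $l=1$ specialisation.
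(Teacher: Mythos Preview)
Your proposal is correct and matches the paper's (implicit) approach: the corollary is stated without proof precisely because it is the direct $l=1$ specialisation of \Cref{thm: theorem motivic all g} and \Cref{thm: fact genus 0}, and you have unwound the bookkeeping accurately, including the Grassmannian duality $[G(\Tilde r,r)]=[G(s,r)]$ and the observation that the stray index $i$ in the exponent should read $i=1$.
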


\appendix
\section{Functoriality of virtual classes}\label{app:functoriality-VFC}
Fix a smooth projective curve $C$, a locally free sheaf $E$, two $l$-tuples $\ns$ and $\nd$, and form the Quot scheme $\QQ = \Quot_C(E,\ns,\nd)$.

Given a locally free sheaf $F$ on $C$, define for each $i=1,\ldots,l$ the associated \emph{tautological complex} on $\QQ$ by 
\begin{align*}
F^{[\nd,i]}=\RR\pi_*\left(p^*F\otimes \CT_i\right),
\end{align*}
where $\CT_i$ is the $i$-th universal quotient over $ \QQ\times C$ and $\pi, p$ are the natural projections to $\QQ$ and $C$ respectively. By cohomology and base change, the fibre over a point $z=[E\onto T_l\onto \cdots \onto T_1]\in\QQ$ is the complex
\[
F^{[\nd,i]}\big|_{z}=\RR\Gamma(C, F\otimes T_i),
\]
which implies that $F^{[\nd,i]} \in \Perf^{[0,1]}(\QQ)$.
Note that
\begin{equation}\label{eqn:basechange-cohomology}
\HH^1(C, F\otimes T_i) = h^1(F^{[\nd,i]}\big|_{z}) = h^1(F^{[\nd,i]})\big|_z
\end{equation}
since there is no higher cohomology.

Now assume that $E$ sits in a short exact sequence
\[
0 \to E_1 \to E \to E_2 \to 0
\]
of locally free sheaves. Set $\QQ_2 = \Quot_C(E_2,\ns,\nd)$. Precomposing with $E \onto E_2$ yields a natural closed immersion
\[
\begin{tikzcd}
\QQ_2 \arrow[hook]{r}{\iota} & \QQ,
\end{tikzcd}
\]
which realises $\QQ_2$ as the zero locus of a natural section 
\[
\sigma \in \HH^0(\QQ,h^0(F_1^{[\nd,l]})), \qquad F_1 = E_1^\ast.
\]
The proof of this assertion goes along the same lines of the one of \cite[Thm.~2.8]{Mon_double_nested}.
We remark that a similar situation was studied in \cite[Sec.~4.1]{stark2024quot} for the Quot scheme of $0$-dimensional quotients on a smooth projective surface.

Our next goal is to construct a morphism of exact triangles (which we will call a `compatibility diagram')
\begin{equation}\label{diag:compatible-triple}
\begin{tikzcd}
\iota^\ast \BF_{\QQ}\arrow{r}{\psi}\arrow{d} & \BF_{\QQ_2}\arrow{r}\arrow{d} & \BF_{\QQ_2/\QQ}\arrow{d} \\
\iota^\ast \BL_{\QQ}\arrow{r} & \BL_{\QQ_2}\arrow{r} & \BL_{\QQ_2/\QQ}
\end{tikzcd}
\end{equation}
where $\BF_{\QQ}$ and $\BF_{\QQ_2}$ are the complexes \eqref{def:Gamma-cone}, and the rightmost square is induced by the leftmost one. Once we have this, and assuming $\BF_{\QQ} = \BE_{\QQ}$ (cf.~\Cref{rmk:the-two-cones}), by \Cref{thm: pot 2 nest} the map 
\[
\begin{tikzcd}
\BF_{\QQ_2/\QQ}\arrow{r} & \BL_{\QQ_2/\QQ}
\end{tikzcd}
\]
is automatically a relative obstruction theory for $\iota\colon \QQ_2 \into \QQ$ by \cite[Constr.~3.13]{Manolache-virtual-pb}. Under suitable additional assumptions, we can, and will, make sure $\BF_{\QQ_2/\QQ}$ is concentrated in degree $-1$, equal to the shifted locally free sheaf of the form $F^\ast\big|_{\QQ_2}[1]$, where $F= h^0(F_1^{[\nd,l]}) = \pi_\ast(p^\ast F_1 \otimes \CT_l)$. Then, in this situation, \cite{KKP} will give us the compatibility relation
\[
\iota_\ast[\QQ_2]^{\vir} = e(F)\cap [\QQ]^{\vir}\, \in \, A_\ast(\QQ)
\]
between virtual classes. See \Cref{prop:PF-vir} for the precise statement. 

\smallbreak
Let us go back to the contruction of $\psi$. Let us denote by $\CK_i$ and $\CT_i$ the universal kernels and the universal quotients, respectively, on $\QQ \times C$. Similarly, $\CK_i^2$ and $\CT_i^2$ are the universal kernels and universal quotients on $\QQ_2 \times C$. Consider, to fix the notation, the diagram
\[
\begin{tikzcd}
& \QQ_2 \times C\arrow{dr}{p_2}\arrow[hook]{rr}{\iota_C}\arrow[swap]{dl}{\pi_2} & & \QQ \times C\arrow[swap]{dl}{p}\arrow{dr}{\pi} \\
\QQ_2 & & C & & \QQ & 
\end{tikzcd}
\]
and note that one has $\iota_C^\ast E_{\QQ} = p_2^\ast E$. Consider, for each $i=1,\ldots,l$, the morphism of exact sequences
\[
\begin{tikzcd}
0\arrow{r} & \iota_C^\ast\CK_i\arrow{r}\arrow{d} & \iota_C^\ast E_{\QQ}\arrow{r}\arrow[two heads]{d} & \iota_C^\ast \CT_i\isoarrow{d}\arrow{r}\arrow{d} & 0 \\
0\arrow{r} & \CK_i^2\arrow{r} & p_2^\ast E_2\arrow{r} & \CT_i^2\arrow{r} & 0
\end{tikzcd}
\]
in $\Coh(\QQ_2\times C)$, where, to confirm exactness on the left for the top sequence, we use the vanishings of the higher left derived restriction $\mathbf{L}_{>0}\iota_C^\ast$ on all $\QQ$-flat sheaves such as $\CT_i$.

The kernel of the middle vertical surjection is $p_2^\ast E_1$, therefore we obtain an exact triangle
\[
\begin{tikzcd}
p_2^\ast E_1 \arrow{r} & \iota_C^\ast\CK_i\arrow{r} & \CK_i^2 \arrow{r} & p_2^\ast E_1[1]
\end{tikzcd}
\]
in the derived category of $\QQ_2 \times C$. Applying $\RRlHom_{\pi_2}(-,\CT_i^2)$ yields an exact triangle
\[
\begin{tikzcd}
\RRlHom_{\pi_2}(\CK_i^2,\CT_i^2) \arrow{r}{a_i} &
\RRlHom_{\pi_2}(\iota_C^\ast\CK_i,\CT_i^2) \arrow{r} &
\RRlHom_{\pi_2}(p_2^\ast E_1,\CT_i^2)
\end{tikzcd}
\]
in the derived category of $\QQ_2$. The middle term is 
\[
\RRlHom_{\pi_2}(\iota_C^\ast\CK_i,\iota_C^\ast \CT_i) = \RR\pi_{2\ast} \iota_C^\ast \RRlHom(\CK_i,\CT_i) = \iota^\ast \RRlHom_\pi(\CK_i,\CT_i).
\]
The third term is, after setting $F_1 = E_1^\ast$,
\[
\RR \pi_{2\ast}\RRlHom(p_2^\ast E_1,\CT_i^2) = \RR \pi_{2\ast} (p_2^\ast F_1\otimes \CT_i^2) = F_1^{[\nd,i]} \in \Perf^{[0,1]}(\QQ_2).
\]
We have thus obtained natural exact triangles 
\[
\begin{tikzcd}
\RRlHom_{\pi_2}(\CK_i^2,\CT_i^2)\arrow{r}{a_i} &   \iota^\ast \RRlHom_\pi(\CK_i,\CT_i)\arrow{r} & F_1^{[\nd,i]},\quad i=1,\ldots,l,
\end{tikzcd}
\]
in the derived category of $\QQ_2$. The same construction, namely applying $\RRlHom_{\pi_2}(-,\CT_i^2)$ to the exact triangle
\[
\begin{tikzcd}
p_2^\ast E_1 \arrow{r} & \iota_C^\ast\CK_{i+1}\arrow{r} & \CK_{i+1}^2 \arrow{r} & p_2^\ast E_1[1]
\end{tikzcd}
\]
gives rise to natural exact triangles
\[
\begin{tikzcd}
\RRlHom_{\pi_2}(\CK_{i+1}^2,\CT_i^2)\arrow{r}{b_i} & \iota^\ast \RRlHom_\pi(\CK_{i+1},\CT_i) \arrow{r} & F_1^{[\nd,i]}, \quad i=1,\ldots,l-1,
\end{tikzcd}
\]
in the derived category of $\QQ_2$. Taking their direct sums we obtain a diagram of (horizontal and vertical) exact triangles
\[
\begin{tikzcd}[column sep=large,row sep=large]
\displaystyle\bigoplus_{i=1}^l\RRlHom_{\pi_2}(\CK_i^2,\CT_i^2)\arrow{d}{\RR_{\pi_2}\Delta}\arrow{r}{\mathrm{diag}(a_i)} & \displaystyle\bigoplus_{i=1}^l\iota^\ast \RRlHom_\pi(\CK_i,\CT_i)\arrow{d}{\iota^\ast \RR_\pi\Delta}\arrow{r} & \displaystyle\bigoplus_{i=1}^{l} F_1^{[\nd,i]}\arrow{d} \\
\displaystyle\bigoplus_{i=1}^{l-1}\RRlHom_{\pi_2}(\CK_{i+1}^2,\CT_i^2)\arrow{d}\arrow{r}{\mathrm{diag}(b_i)} & \displaystyle\bigoplus_{i=1}^{l-1}\iota^\ast \RRlHom_\pi(\CK_{i+1},\CT_i)\arrow{d} \arrow{r} & \displaystyle\bigoplus_{i=1}^{l-1} F_1^{[\nd,i]}\arrow{d} \\
\Gamma_2\arrow{r} & \iota^\ast \Gamma \arrow{r} & \Gamma_{\QQ_2/\QQ}
\end{tikzcd}
\]
andapplying $(-)^\vee[1]$ to the bottom horizontal triangle gives the triangle
\[
\begin{tikzcd}
\Gamma_{\QQ_2/\QQ}^\vee[1]\arrow{r} & (\iota^\ast\Gamma)^\vee[1] = \iota^\ast(\Gamma^\vee[1]) = \iota^\ast \BF_{\QQ} \arrow{r}{\psi} & \Gamma_2^\vee[1] = \BF_{\QQ_2} \arrow{r} & \Gamma_{\QQ_2/\QQ}^\vee[2].
\end{tikzcd}
\]
Note that
\[
\Gamma_{\QQ_2/\QQ}^\vee[1] = (F_1^{[\nd,l]})^\vee\,\in\,\Perf^{[-1,0]}(\QQ_2),
\]
the dual of the $l$-th tautological complex associated to $F_1=E_1^\ast$ (calculated on $\QQ_2$).
Now set
\[
\BF_{\QQ_2/\QQ} = \Cone(\psi) = \Gamma_{\QQ_2/\QQ}^\vee[2] = (F_1^{[\nd,l]})^\vee[1]\,\in\,\Perf^{[-2,-1]}(\QQ_2).
\]
We have obtained the sought after compatible triple \eqref{diag:compatible-triple}, and in particular a morphism of complexes
\begin{equation}\label{eqn:relative-POT}
\begin{tikzcd}
\BF_{\QQ_2/\QQ}\arrow{r} & \BL_{\QQ_2/\QQ}.
\end{tikzcd}
\end{equation}

\begin{prop}\label{prop:PF-vir}
Assume that $\BE_{\QQ}=\BF_{\QQ}$ and $\BE_{\QQ_2}=\BF_{\QQ_2}$ (cf.~\Cref{rmk:the-two-cones}), so that \eqref{eqn:relative-POT} is a relative perfect obstruction theory for $\iota\colon \QQ_2\into \QQ$.
Assume, in addition, that for all closed points $z=[E\onto T_l\onto \cdots \onto T_1]\in \QQ$ we have
\[
\HH^0(C,E_1\otimes T_l^{\mathrm{fr},\ast}\otimes \omega_C)=0,
\]
where $T_l^{\mathrm{fr}}$ denotes the free part of $T_l$. Then $F_1^{[\nd,l]}\cong h^0(F_1^{[\nd,l]}) \in \Coh(\QQ)$ is a locally free sheaf of rank $(1-g)s_l\rk E_1-s_l\deg E_1+d_l\rk E_1$. Denote this sheaf by $F$. Then we also have identities
    \begin{align*}
\iota_*\OO^{\vir}_{\QQ_2}&=\Lambda^{\bullet}\left(F^\ast\right)\otimes \OO^{\vir}_{\QQ}\in K_0(\QQ),\\
\iota_*[\QQ_2]^{\vir}&=e\left(F\right)\cap [\QQ]^{\vir}\in A_*(\QQ)
    \end{align*}
in $K$-theory and Chow, respectively.
\end{prop}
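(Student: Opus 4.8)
The plan is to upgrade $F_1^{[\nd,l]}$ to an honest vector bundle $F$ on $\QQ$, to recognise $\iota\colon\QQ_2\into\QQ$ as the zero locus of a section of $F$, and then to deduce the two identities from the excess intersection behaviour of virtual fundamental classes applied to the compatible triple \eqref{diag:compatible-triple}. For the local freeness, I would read off the fibre of $h^1(F_1^{[\nd,l]})$ at a closed point $z=[E\onto T_l\onto\cdots\onto T_1]$ using the base change identity \eqref{eqn:basechange-cohomology}: it equals $\HH^1(C,E_1^\ast\otimes T_l)$. Splitting $T_l=T_l^{\mathrm{fr}}\oplus T_l^{\mathrm{tors}}$ into free and torsion parts, the torsion summand contributes no $\HH^1$, while Serre duality on $C$ turns the free summand into $\HH^0(C,E_1\otimes (T_l^{\mathrm{fr}})^\ast\otimes\omega_C)^\ast$, which vanishes by assumption. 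Hence $h^1(F_1^{[\nd,l]})$ has vanishing fibres and is therefore zero, so the perfect complex $F_1^{[\nd,l]}\in\Perf^{[0,1]}(\QQ)$ is quasi-isomorphic to $F:=h^0(F_1^{[\nd,l]})$, which is then locally free as the kernel of a surjection of the vector bundles presenting it. Its rank is $\chi(C,E_1^\ast\otimes T_l)$, which Riemann--Roch on $C$ evaluates to $(1-g)s_l\rk E_1-s_l\deg E_1+d_l\rk E_1$. Pulling back along $\iota$ and using $\iota_C^\ast\CT_l\cong\CT_l^2$, the same vanishing identifies the tautological complex for $F_1$ computed on $\QQ_2$ with the bundle $\iota^\ast F$, so the relative term of \eqref{diag:compatible-triple} is the shifted locally free sheaf $\BF_{\QQ_2/\QQ}\cong\iota^\ast F^\ast[1]$.

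Next I would invoke the construction carried out just before the statement (modelled on \cite[Thm.~2.8]{Mon_double_nested}), which realises $\iota$ as the zero scheme of the section $\sigma\in\HH^0(\QQ,h^0(F_1^{[\nd,l]}))=\HH^0(\QQ,F)$. The relative obstruction theory \eqref{eqn:relative-POT}, now in the form $\iota^\ast F^\ast[1]\to\BL_{\QQ_2/\QQ}$, is then the tautological obstruction theory attached to the pair $(F,\sigma)$: its $h^{-1}$ is the natural surjection $\iota^\ast F^\ast\onto\mathcal{I}_{\QQ_2/\QQ}/\mathcal{I}^2_{\QQ_2/\QQ}$ induced by $\sigma$ onto the conormal sheaf. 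Granting the hypotheses $\BE_{\QQ}=\BF_{\QQ}$ and $\BE_{\QQ_2}=\BF_{\QQ_2}$, diagram \eqref{diag:compatible-triple} then exhibits a compatible triple consisting of the absolute obstruction theory of $\QQ_2$, the pullback of the one on $\QQ$, and the tautological relative one $\iota^\ast F^\ast[1]$, over the Cartesian square presenting $\QQ_2=Z(\sigma)$ as the scheme-theoretic preimage under $\sigma$ of the zero section of $\mathrm{Tot}(F)$.

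With this compatible triple in hand, I would conclude by applying the functoriality of virtual classes for zero loci of bundle sections \cite{KKP} --- equivalently, Manolache's virtual pullback along the zero section of $\mathrm{Tot}(F)$ --- which returns $\iota_\ast[\QQ_2]^{\vir}=e(F)\cap[\QQ]^{\vir}$ in $A_\ast(\QQ)$ and, in its $K$-theoretic form, $\iota_\ast\OO^{\vir}_{\QQ_2}=\lambda_{-1}(F^\vee)\cdot\OO^{\vir}_{\QQ}=\Lambda^\bullet(F^\ast)\otimes\OO^{\vir}_{\QQ}$ in $K_0(\QQ)$, which is the claim. The step I expect to be the main obstacle is verifying that \eqref{diag:compatible-triple} genuinely meets the compatibility hypothesis of \cite{KKP} --- namely that the middle and relative obstruction theories appearing there are literally $\iota^\ast\BE_{\QQ}$ and the tautological bundle-section obstruction theory, not merely complexes carrying the right $K$-theory classes --- which is precisely the point where the identities $\BE_\bullet=\BF_\bullet$ of \Cref{rmk:the-two-cones} are indispensable; granted those, the remaining verifications are routine compatibilities of distinguished triangles.
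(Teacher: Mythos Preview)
Your proposal is correct and follows essentially the same route as the paper's proof: vanishing of $h^1(F_1^{[\nd,l]})$ via Serre duality and the hypothesis, realisation of $\QQ_2$ as the zero locus of the canonical section $\sigma$ of the resulting bundle $F$, identification of $\BF_{\QQ_2/\QQ}$ with $\iota^\ast F^\ast[1]$, and application of the compatible-triple functoriality of \cite{KKP} over the Cartesian square with the zero section of $\V(F)$. Your additional remarks --- the explicit free/torsion decomposition of $T_l$, the Riemann--Roch computation of the rank, and the caveat about needing $\BE_\bullet=\BF_\bullet$ as honest identities of complexes rather than $K$-theory classes --- are all to the point and match the paper's treatment.
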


\begin{proof}
Our assumption, combined with Serre duality and \Cref{eqn:basechange-cohomology}, says that 
\[
h^1(F_1^{[\nd,l]})\big|_z=0,
\]
from which we deduce that 
\[
F_1^{[\nd,l]} = h^0(F_1^{[\nd,l]}) = \pi_\ast(p^\ast F_1 \otimes \CT_l)\,\in\,\Coh(\QQ)
\]
is locally free. Let us set 
\[
F = F_1^{[\nd,l]}, \qquad \V(F) = \Spec_{\OO_{\QQ}} \Sym F^\ast.
\]
We know that a canonical section $\sigma \in \HH^0(\QQ,\V(F)) = \Hom_{\OO_{\QQ}}(\OO_{\QQ},F)$ cuts out $\QQ_2$. In other words, we have a cartesian diagram
\[
\begin{tikzcd}[row sep=large,column sep=large]
\QQ_2\MySymb{dr} \arrow[hook]{r}{\iota}\arrow[hook,swap]{d}{\iota} & \QQ\arrow[hook]{d}{\sigma} \\
\QQ \arrow[hook,swap]{r}{0} & \V(F)
\end{tikzcd}
\]
where `$0$' denotes the zero section. We claim that 
\begin{equation}\label{0^!-formula}
0^![\QQ]^{\vir} = [\QQ_2]^{\vir}
\end{equation}
in $A_\ast(\QQ_2)$.
Note that, since the zero section is a regular immersion, its cotangent complex is 
\[
\BL_{\QQ/\V(F)} = (\mathscr I/\mathscr I^2)[1] = F^\ast[1],
\]
where $\mathscr I$ is the ideal sheaf of the zero section.
Its restriction to $\QQ_2$ is
\[
\iota^\ast \BL_{\QQ/\V(F)} = F^\ast\big|_{\QQ_2}[1] = \BF_{\QQ_2/\QQ}.
\]
We are then in the situation of a compatible diagram
\[
\begin{tikzcd}
\iota^\ast \BF_{\QQ}\arrow{r}{\psi}\arrow{d} & \BF_{\QQ_2}\arrow{r}\arrow{d} & \iota^\ast \BL_{\QQ/\V(F)}\arrow{d} \\
\iota^\ast \BL_{\QQ}\arrow{r} & \BL_{\QQ_2}\arrow{r} & \BL_{\QQ_2/\QQ}
\end{tikzcd}
\]
and \cite[Thm.~1]{KKP} gives the identity \eqref{0^!-formula}. Pushing \eqref{0^!-formula} forward in Chow and in $K$-theory gives the required identities.
\end{proof}
\begin{remark}
We expect to be possible to drop the  assumption that $F_1^{[\nd,l]}$ is a locally free sheaf, but only a perfect 2-term complex, by using a recent result of Oh--Thomas \cite{OT_quantum}, in order to obtain analogous pushforward formulae.
\end{remark}

\section{\texorpdfstring{Virtual $\chi_{-y}$-genus}{}}\label{app: chi_y}
Consider $\bfk=\BC$ for this section.
Let $C$ be a smooth projective curve, $E$ a locally free sheaf of rank $r$ and fix tuples $\ns=(s_1\leqslant \dots \leqslant s_l)$ and $\nd=(d_1, \dots, d_l)$. Notice that by deformation invariance, we can always assume that there exists a splitting $E=\bigoplus_{\alpha=1}^r L_\alpha$. We proved in \Cref{cor: virtual cycles} that the hyperquot scheme $\QQ=\Quot_C(E, \ns, \nd)$ admits a virtual structure sheaf
\begin{align*}
    \OO_{\QQ}^{\vir}\in K_0(\QQ).
\end{align*}
The \emph{virtual tangent bundle} is the class in $K$-theory of the dual of the perfect obstruction theory of \Cref{thm: pot 2 nest}, which by \Cref{rmk:same-K} can be computed as
\begin{align*}
    T_{\QQ}^{\vir}=\sum_{i=1}^l \RRlHom_\pi (\CK_i,\CT_i)-\sum_{i=1}^{l-1} \RRlHom_\pi (\CK_{i+1},\CT_i) \in K^0(\QQ).
\end{align*}
For any locally free sheaf $V$ on a projective scheme $X$, define the class in $K$-theory
\[
\Lambda_{t}V=\sum_{i=0}^{\rk V}t^i\cdot \Lambda^i V\in K^0(X)[t],
\]
and extend it by linearity, for any $V\in K^0(X)$, to a class $\Lambda_{t}V\in K^0(X)[\![t]\!]$ (see e.g. \cite[Def. 5.1]{FG_riemann_roch}).
Following \cite{FG_riemann_roch}, we define the \emph{virtual $\chi_{-y}$-genus} of $\Quot_C(E, \ns, \nd)$ as\footnote{The $\chi_{-y}$-genus is polynomial in $y$ by \cite[Prop. 5.3]{FG_riemann_roch}.}
\begin{align*}
    \chi^{\vir}_{-y}(\Quot_C(E, \ns, \nd))=\chi\left(\Quot_C(E, \ns, \nd), \OO^{\vir}\otimes \Lambda_{-y} (T^{\vir})^\vee \right)\in \BZ[y].
\end{align*}
We define the generating series of $\chi_{-y}$-genera
\begin{align*}
    Z_{C, E, \ns}(\nq,y)=\sum_{\nd}\nq^{\nd}\cdot\chi^{\vir}_{-y}(\Quot_C(E, \ns, \nd))\in \BZ[y](\!(\nq)\!),
\end{align*}
where we used the multinotation $\nq^{\nd}=q_1^{d_1}\cdots q_l^{d_l}$. This generating series is well defined as a Laurent power series in $\nq$, since the hyperquot scheme $ \Quot_C(E, \ns, \nd)$ is empty for $\nd$ sufficiently negative.

As an application, we prove that the $\chi_{-y}$-genus of the hyperquot schemes only depends on the genus of the curve and the degrees of the line bundles $L_\alpha$. We remark that, due to the presence of nesting conditions, we cannot apply verbatim the standard techniques of \cite{EGL_cobordism}.
\begin{prop}\label{prop: poly} The virtual $\chi_{-y}$-genus $\chi_{-y}^{\vir}(\Quot_C(E, \ns, \nd)) $ depends on $(C, L_1, \dots, L_r)$ only  on the genus $g=g(C)$ and the degrees $\deg L_\alpha$, for $\alpha=1, \dots, r$.
\end{prop}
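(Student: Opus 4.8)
The plan is to deduce the statement from the \emph{deformation invariance} of the virtual $\chi_{-y}$-genus. As noted above, the cobordism/universality technique of \cite{EGL_cobordism} does not apply verbatim because of the nesting, so instead I would realise $\Quot_C(E,\ns,\nd)$ as a fibre of a projective family over a \emph{connected} base carrying a relative perfect obstruction theory, and let the pair $(C,L_1,\dots,L_r)$ vary inside that family while keeping the discrete data $r,\ns,\nd$ and the invariants $g=g(C)$ and $d_\alpha=\deg L_\alpha$ fixed. The first point is that the moduli of such data is connected: the moduli stack $\mathcal M_g$ of smooth projective genus $g$ curves is irreducible, and over it the relative Picard stacks $\mathrm{Pic}^{d_\alpha}$ have fibres that are torsors under abelian varieties, hence irreducible; therefore the moduli of tuples $(C,L_1,\dots,L_r)$ with $\deg L_\alpha=d_\alpha$ is irreducible. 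Concretely, given any two such tuples, I would produce a smooth connected $\bfk$-scheme $S$, a smooth projective family of genus $g$ curves $\rho\colon\mathcal C_S\to S$, and line bundles $\mathcal L_1,\dots,\mathcal L_r$ on $\mathcal C_S$ of relative degrees $d_1,\dots,d_r$, such that both tuples occur as fibres $(\mathcal C_{S,s},\mathcal L_{1,s},\dots,\mathcal L_{r,s})$. Such $\mathcal L_\alpha$ are unique only up to twist by line bundles pulled back from $S$, but such a twist does not change any fibre $\mathcal L_{\alpha,s}$, hence is harmless below.

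Next I would set $\mathcal E=\bigoplus_{\alpha}\mathcal L_\alpha$ and form the relative hyperquot scheme $\pi\colon\FQ=\Quot_{\mathcal C_S/S}(\mathcal E,\ns,\nd)\to S$, which is projective over $S$ by the relative version of the embedding \eqref{eqn:embedding_in_product}, with fibres $\FQ_s\cong\Quot_{\mathcal C_{S,s}}(\mathcal E_s,\ns,\nd)$; in particular at the two chosen points $\mathcal E_s\cong E$ and $\FQ_s\cong\Quot_C(E,\ns,\nd)$. The construction of \Cref{thm: pot 2 nest} should be repeated over $S$: one replaces $\Bun_C$ by the relative stack $\Bun_{\mathcal C_S/S}$ (smooth over $S$) and applies \Cref{lemma:POT-on-abelian-cone}, which is already stated for a flat projective family of stacks over a base; this yields a relative perfect obstruction theory $\BE_{\FQ/S}\to\BL_{\FQ/S}$ for $\pi$. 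Since that construction is built entirely from push-forwards along the relative curve $\FQ\times_S\mathcal C_S\to\FQ$ of perfect complexes, together with an abelian cone over $\Bun_{\mathcal C_S/S}$, cohomology and base change (all sheaves in play being $\FQ$-flat) shows that the restriction of $\BE_{\FQ/S}$ to a fibre $\FQ_s$ is a perfect obstruction theory on $\FQ_s$ whose class in $K^0(\FQ_s)$ is the one of \Cref{thm: pot 2 nest}, namely, by \Cref{rmk:same-K}, the dual of $\sum_i\RR\Hom(K_i,T_i)-\sum_i\RR\Hom(K_{i+1},T_i)$.

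Finally, since both $\OO^{\vir}$ and $T^{\vir}=\BE^\vee$ depend only on the $K$-theory class of the obstruction theory (cf.~\Cref{sec: OT}), the virtual $\chi_{-y}$-genus of $\FQ_s$ computed from $\BE_{\FQ/S}\big|_{\FQ_s}$ equals the one of \Cref{cor: virtual cycles}; and the virtual $\chi_{-y}$-genus is deformation invariant, because $s\mapsto\chi_{-y}^{\vir}(\FQ_s)$ is the Euler characteristic along the fibres of the perfect complex $\RR\pi_\ast\bigl(\OO^{\vir}_{\FQ/S}\otimes\Lambda_{-y}(T^{\vir}_{\FQ/S})^\vee\bigr)$ on $S$, whose formation commutes with base change, hence is locally constant on $S$ (see \cite{FG_riemann_roch}; compare also the relative-to-absolute reduction \eqref{diag:relative-POT-to-absolute}). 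As $S$ is connected, $\chi_{-y}^{\vir}(\FQ_s)$ is independent of $s$, so in particular it takes the same value at the two chosen fibres; this proves that $\chi_{-y}^{\vir}(\Quot_C(E,\ns,\nd))$ depends on $(C,L_1,\dots,L_r)$ only through $g$ and the degrees $\deg L_\alpha$, and summing over $\nd$ gives the same for $Z_{C,E,\ns}(\nq,y)$. The part I expect to be most delicate is the second paragraph: making precise that the proof of \Cref{thm: pot 2 nest} globalises in families and that the resulting theory restricts correctly to fibres. This is, however, essentially a rerun of the original argument --- which is already relative in spirit --- so the genuinely necessary (if unglamorous) input is the connectedness in the first paragraph, resting on the irreducibility of $\mathcal M_g$ and the connectedness of the relative Picard.
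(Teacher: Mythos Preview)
Your approach is correct and genuinely different from the paper's. The paper proceeds by $K$-theoretic virtual localisation with respect to $\TT=\BG_m^r$: each fixed component is identified with a product of symmetric powers of $C$, the restricted virtual tangent and normal bundles are expressed in terms of universal ideal sheaves on these symmetric powers, and Grothendieck--Riemann--Roch reduces each localised contribution to an integral over $\prod_k C^{(n_k)}$ of Chern classes of objects of the shape $\RRlHom_\pi\bigl(\bigotimes_k\CJ_k,\bigotimes_m\CJ_m\otimes L_a\bigr)$; the dependence only on $g$ and the $\deg L_\alpha$ is then read off from an external result \cite[Prop.~5.3]{Mon_double_nested} about such tautological integrals. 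Your deformation-invariance argument bypasses localisation entirely, at the price of globalising \Cref{thm: pot 2 nest} to a relative perfect obstruction theory over the moduli of $(C,L_1,\dots,L_r)$ and invoking constancy of virtual Euler characteristics along the fibres. What you gain is conceptual directness, and an argument that would extend to non-split $E$ of fixed degree; what the paper's route gains is that everything stays on a single curve --- no relative obstruction theory is needed --- and the localisation machinery set up there is in any case the natural first step toward actually \emph{computing} $Z_{C,E,\ns}(\nq,y)$, not merely establishing its invariance. The point you flag as delicate (the relative version of \Cref{thm: pot 2 nest} and its compatibility with restriction to fibres) is indeed the one requiring care, though it is essentially routine.
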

\begin{proof}
    Consider the action of the torus $\TT$ on $\Quot_C(E, \ns, \nd)$ as in Section \ref{sec: torus action}. To simplify the notation, we write 
    \[
     \Quot_C(E, \ns, \nd)^\TT=\coprod_{\underline{\ns}, \underline{\nd}} Q_{\underline{\ns}, \underline{\nd}}
    \]
    the decomposition of the fixed locus of Lemma \ref{lemma: fixed locus}, where 
\[
Q_{\underline{\ns}, \underline{\nd}}=\prod_{\alpha=1}^r   \Quot_C(L_\alpha,\ns_\alpha, \nd_\alpha).
\]
Denote by $w_1,\ldots,w_r$ the irreducible representations of the torus $\TT$.

 By the $K$-theoretic virtual localisation \cite{FG_riemann_roch}, each fixed component of $ \Quot_C(E, \ns, \nd)$ inherits a virtual structure sheaf and 
    \begin{align}\label{eqn: localized contrub chi y}
         \chi^{\vir}_{-y}(\Quot_C(E, \ns, \nd))=\left.\left(\sum_{\underline{\ns}, \underline{\nd}}\chi\left(  Q_{\underline{\ns}, \underline{\nd}}, \OO^{\vir}\otimes \frac{\Lambda_{-y} (T^{\vir}|_{ Q_{\underline{\ns}, \underline{\nd}}})^\vee }{\Lambda_{-1}(N^{\vir}_{Q_{\underline{\ns}, \underline{\nd}}})^{\vee} }\right)\right)\right|_{w_\alpha=1}.
    \end{align}
   Here, for each component of the fixed locus $\chi(\cdot)$ is performed $\TT$-equivariantly, and the (restriction of the) virtual tangent bundle is
   \[
   T^{\vir}|_{ Q_{\underline{\ns}, \underline{\nd}}}=T^{\vir}_{ Q_{\underline{\ns}, \underline{\nd}}}+N^{\vir}_{Q_{\underline{\ns}, \underline{\nd}}}\in K^0(Q_{\underline{\ns}, \underline{\nd}}), 
   \]
   where the \emph{virtual normal bundle} $ N^{\vir}_{Q_{\underline{\ns}, \underline{\nd}}}$ is the $\TT$-movable part and the virtual tangent bundle $T^{\vir}_{ Q_{\underline{\ns}, \underline{\nd}}}$ of $ Q_{\underline{\ns}, \underline{\nd}}$ is the $\TT$-fixed part by \cite{GP_virtual_localization}.

   Fix now $\underline{\ns}, \underline{\nd}$ and consider the cartesian diagram
   \begin{center}
        \begin{tikzcd}
       Q_{\underline{\ns}, \underline{\nd}}\times C\arrow[r, "\iota"]\arrow[d, "\pi"]&  \Quot_C(E, \ns, \nd) \times C\arrow[d, "\pi"]\\
         Q_{\underline{\ns}, \underline{\nd}}\arrow[r, "\iota"]&  \Quot_C(E, \ns, \nd). 
   \end{tikzcd}
   \end{center}

   The universal sheaves $\CK_i, \CT_i$ on $ \Quot_C(E, \ns, \nd)\times C$ restrict via $\iota$ as
   \begin{align*}
       \CK_i&=\bigoplus_{\alpha=1}^r \CK_{i, \alpha}\\
       \CT_i&=\bigoplus_{\alpha=1}^r \CT_{i, \alpha},
   \end{align*}
   where $\CK_{i, \alpha}, \CT_{i, \alpha}$ are the universal sheaves on each factor $ \Quot_C(L_\alpha,\ns_\alpha, \nd_\alpha)\times C$ of $Q_{\underline{\ns}, \underline{\nd}}\times C$.

   The restriction of the virtual tangent bundle equivariantly decomposes as
 \begin{align*}
    T^{\vir}|_{Q_{\underline{\ns}, \underline{\nd}}}=\sum_{\alpha, \beta=1}^r\left(\sum_{i=1}^l \RRlHom_\pi (\CK_{i, \alpha},\CT_{i, \beta})-\sum_{i=1}^{l-1} \RRlHom_\pi (\CK_{i+1, \alpha},\CT_{i, \beta})\right) w_\beta w_\alpha^{-1} \in K^0(Q_{\underline{\ns}, \underline{\nd}}),
\end{align*}
and its $\TT$-fixed part is
\begin{align*}
     T^{\vir}_{Q_{\underline{\ns}, \underline{\nd}}}=\sum_{\alpha=1}^r\left(\sum_{i=1}^l \RRlHom_\pi (\CK_{i, \alpha},\CT_{i, \alpha})-\sum_{i=1}^{l-1} \RRlHom_\pi (\CK_{i+1, \alpha},\CT_{i, \alpha})\right).
\end{align*}
By the discussions of Section \ref{sec: comb fixed} it is easy to see that, whenever 
$Q_{\underline{\ns}, \underline{\nd}} $ is non-empty, it is isomorphic to a product
\begin{align}\label{eqn: Qn as prod of hilb nested}
    Q_{\underline{\ns},  \underline{\nd}}\cong \prod_{\alpha}C^{[\nn_\alpha]},
\end{align}
where each $\nn_\alpha=(n_{1, \alpha}\leqslant \dots \leqslant n_{k_{\alpha}, \alpha} )$ for some $k_{\alpha}\geqslant 1$. Therefore $Q_{\underline{\ns},  \underline{\nd}}$ is smooth, unobstructed and the virtual tangent bundle coincides with the  actual tangent bundle (as classes in $K$-theory)
\[
 T^{\vir}_{Q_{\underline{\ns},  \underline{\nd}}}= T^{}_{Q_{\underline{\ns},  \underline{\nd}}}\in K^0(Q_{\underline{\ns}, \underline{\nd}}),
\]
which implies that the virtual structure sheaf coincides with the structure sheaf by \cite[Cor. 4.5]{Tho_K-theo_Fulton}. On the product $ \prod_{\alpha}C^{[\nn_\alpha]}\times C$, there are universal exact sequences of sheaves
\[
\begin{tikzcd}
0\arrow{r} &  \CI_{i,\alpha}\arrow{r} & \OO\arrow{r} & \OO_{\CZ_{i,\alpha}} \arrow{r} & 0 
\end{tikzcd}
\]
where we suppressed some obvious pullback maps. Under the identification \eqref{eqn: Qn as prod of hilb nested}, the universal sequences of sheaves on $Q_{\underline{\ns},  \underline{\nd}}\times C$
\[
\begin{tikzcd}
0\arrow{r} & \CK_{i, \alpha}\arrow{r} & L_\alpha\arrow{r} &\CT_{i, \alpha}\arrow{r} & 0
\end{tikzcd}
\]
are of one of the following two forms
\begin{equation}\label{eqn: one of two options}
\begin{tikzcd}[row sep=tiny]
0 \arrow{r} & L_\alpha\otimes \CI_{i,\alpha} \arrow{r} & L_\alpha \arrow{r} &L_\alpha\otimes \OO_{\CZ_{i,\alpha}}\arrow{r} & 0 \\
0 \arrow{r} & L_\alpha\arrow{r}{\sim} & L_\alpha\arrow{r} & 0.
\end{tikzcd}
\end{equation}
Using \eqref{eqn: one of two options}, we have that under the identification \eqref{eqn: Qn as prod of hilb nested}, for each $\alpha, \beta=1,\dots, r$ and $ i, j=1, \dots, l$ we can represent the classes 
\[
 \RRlHom_\pi (\CK_{i, \alpha},\CT_{j, \beta})\in K^0\left(Q_{\underline{\ns},  \underline{\nd}}\times C\right)
\]
as a linear combination of classes
\begin{align*}
 \RRlHom_\pi(\CI_{i,\alpha}, \CI_{j,\beta}\otimes L_\alpha^{-1}L_\beta ),\, \RRlHom_\pi(\CI_{i,\alpha}, \OO\otimes L_\alpha^{-1}L_\beta )  \in  K^0\left(\prod_{\alpha}C^{[\nn_\alpha]}\times C\right).
\end{align*}
Consider now a tuple $\nn_\alpha=(n_{1, \alpha}\leqslant \dots \leqslant n_{k_{\alpha}, \alpha} )$. We have the isomorphism
\begin{align}\label{eqn: iso nested with product in proof}
    C^{[\nn_\alpha]}\cong C^{(n_{1, \alpha})}\times C^{(n_{2, \alpha}-n_{1, \alpha})}\times \dots \times C^{(n_{k_{\alpha}, \alpha}-n_{k_{\alpha}-1,\alpha})}.
\end{align}
Again, on each factor $C^{(n_{i, \alpha}-n_{i-1, \alpha})}\times C$, there is a universal sequence
\[
\begin{tikzcd}
0 \arrow{r} & \CJ_{i,\alpha}\arrow{r} & \OO \arrow{r} & \OO_{\CW_{i,\alpha}}\arrow{r} & 0,
\end{tikzcd}
\]
and under the isomorphism \eqref{eqn: iso nested with product in proof}, we have the identification
\[
\CI_{i, \alpha}\cong \CJ_{i,\alpha}\otimes \CJ_{i-1,\alpha}\otimes \dots \otimes \CJ_{1,\alpha}.
\]
To sum up, we showed that 
\[
Q_{\underline{\ns}, \underline{\nd}}\cong \prod_{k\in I}C^{(n_k)},
\]
for a suitable family of indices $I$ and suitable $(n_k)_{k\in I}$, and that under this identification, both $T^{\vir}|_{ Q_{\underline{\ns}, \underline{\nd}}}, N^{\vir}_{Q_{\underline{\ns}, \underline{\nd}}}$ are linear combination of classes of the form
\begin{align}\label{eqn: chern classes of the form uni}
    \RRlHom_\pi\left(\bigotimes_{k\in K}\CJ_{k}, \bigotimes_{m\in M}\CJ_{m}\otimes L_{a}\right),
\end{align}
for some families of indices $K, M$ and where $L_a$ is a line bundle on $C$ obtained as a linear combination of the $\set{L_\alpha}_{\alpha}$.

We are now ready to show that each localized contribution of \eqref{eqn: localized contrub chi y} can be expressed as a polynomial on $g=g(C)$ and the degrees $\deg L_\alpha$. By the fact that $\OO^{\vir}=\OO$ and ($\TT$-equivariant) Grothendieck--Riemann--Roch, we have
\begin{align*}
    \chi\left.\left(  Q_{\underline{\ns}, \underline{\nd}}, \OO^{\vir}\otimes \frac{\Lambda_{-y} (T^{\vir}|_{ Q_{\underline{\ns}, \underline{\nd}}})^\vee }{\Lambda_{-1}(N^{\vir}_{Q_{\underline{\ns}, \underline{\nd}}})^{\vee} }\right)\right)&=\int_{ Q_{\underline{\ns}, \underline{\nd}}}\frac{\mathrm{ch}\left( \Lambda_{-y} (T^{\vir}|_{ Q_{\underline{\ns}, \underline{\nd}}})^\vee \right)}{\mathrm{ch}\left( \Lambda_{-1} (N^{\vir}_{Q_{\underline{\ns}, \underline{\nd}}})^\vee \right)}\cdot \mathrm{td}(T_{Q_{\underline{\ns}, \underline{\nd}}})\\
    &=\int_{ \prod_{k\in I}C^{(n_k)}} f,
\end{align*}
where $f$ is a polynomial in the Chern classes of $K$-theory classes of the form \eqref{eqn: chern classes of the form uni}. We conclude by \cite[Prop. 5.3]{Mon_double_nested} that the dependence of the last integral is only on the genus $g=g(C)$ and the degrees of the line bundles $L_\alpha $.
\end{proof}

\bibliographystyle{amsplain-nodash}
\bibliography{The_Bible}

\bigskip
\noindent
{\small Sergej Monavari \\
\address{\'Ecole Polytechnique F\'ed\'erale de Lausanne (EPFL),  CH-1015 Lausanne (Switzerland)} \\
\href{mailto:sergej.monavari@epfl.ch}{\texttt{sergej.monavari@epfl.ch}}
}

\bigskip

\noindent
{\small Andrea T. Ricolfi \\
\address{SISSA, Via Bonomea 265, 34136, Trieste (Italy)} \\
\href{mailto:aricolfi@sissa.it}{\texttt{aricolfi@sissa.it}}
}

\end{document}